\definecolor{jan}{rgb}{0.0,0.3,0.8}
\def\EEE{\color{black}}
\NewDocumentCommand{\makeabbrev}{mmm}
{
	\yoruk_makeabbrev:nnn { #1 } { #2 } { #3 }
}
\makeabbrev{\textbf}{tbf#1}{a,b,c,d,e,f,g,h,i,j,k,l,m,n,o,p,q,r,s,t,u,v,w,x,y,z,A,B,C,D,E,F,G,H,I,J,K,L,M,N,O,P,Q,R,S,T,U,V,W,X,Y,Z}
\makeabbrev{\textbf}{bf#1}{a,b,c,d,e,f,g,h,i,j,k,l,m,n,o,p,q,r,s,t,u,v,w,x,y,z,A,B,C,D,E,F,G,H,I,J,K,L,M,N,O,P,Q,R,S,T,U,V,W,X,Y,Z}
\makeabbrev{\textsf}{tsf#1}{a,b,c,d,e,f,g,h,i,j,k,l,m,n,o,p,q,r,s,t,u,v,w,x,y,z,A,B,C,D,E,F,G,H,I,J,K,L,M,N,O,P,Q,R,S,T,U,V,W,X,Y,Z}
\makeabbrev{\mathsf}{mss#1}{a,b,c,d,e,f,g,h,i,j,k,l,m,n,o,p,q,r,s,t,u,v,w,x,y,z,A,B,C,D,E,F,G,H,I,J,K,L,M,N,O,P,Q,R,S,T,U,V,W,X,Y,Z}
\makeabbrev{\mathfrak}{mf#1}{a,b,c,d,e,f,g,h,i,j,k,l,m,n,o,p,q,r,s,t,u,v,w,x,y,z,A,B,C,D,E,F,G,H,I,J,K,L,M,N,O,P,Q,R,S,T,U,V,W,X,Y,Z}
\makeabbrev{\mathrm}{mrm#1}{a,b,c,d,e,f,g,h,i,j,k,l,m,n,o,p,q,r,s,t,u,v,w,x,y,z,A,B,C,D,E,F,G,H,I,J,K,L,M,N,O,P,Q,R,S,T,U,V,W,X,Y,Z}
\makeabbrev{\mathbf}{mbf#1}{a,b,c,d,e,f,g,h,i,j,k,l,m,n,o,p,q,r,s,t,u,v,w,x,y,z,A,B,C,D,E,F,G,H,I,J,K,L,M,N,O,P,Q,R,S,T,U,V,W,X,Y,Z}
\makeabbrev{\mathcal}{mc#1}{A,B,C,D,E,F,G,H,I,J,K,L,M,N,O,P,Q,R,S,T,U,V,W,X,Y,Z}
\makeabbrev{\mathbb}{mbb#1}{A,B,C,D,E,F,G,H,I,J,K,L,M,N,O,P,Q,R,S,T,U,V,W,X,Y,Z}
\makeabbrev{\mathscr}{ms#1}{A,B,C,D,E,F,G,H,I,J,K,L,M,N,O,P,Q,R,S,T,U,V,W,X,Y,Z}
\makeabbrev{\mathrm}{#1}{
	Id,id,ran,rk,diag,stab,ann,conv,pr,ev,tr,End,Hom,sgn,im,op,can,fin,ext,red,tot,Leb,
	rot,usc,lsc,Lip,lip,bSymLip,osc,AC,loc,coz,z,
	supp,Opt,Adm,Cpl,Geo,GeoOpt,GeoAdm,GeoCpl,reg,res,graph,
	bd,co,Ric,Exp,dExp,dist,seg,Seg,cut,fcut,Cut,SDiff,Iso,Isom,diam,cl,Homeo,Diff,Der,vol,dvol,inj,relint, Graph, sub,
	var,law,Var,Poi,Gam,pa,so,iso,fs,inv,pqi,mix,erg,
	TestF,
}
\makeabbrev{\mathsf}{#1}{CD,BE,MCP,Ent,wMTW,MTW,Ch,RCD,EVI,Rad,dRad,SL,cSL,dSL,ScL,Irr,SC,wFe,VA}
\makeabbrev{\mathsc}{#1}{mmaf,cg}
\newcommand{\eps}{\varepsilon}
\newcommand{\mathsc}[1]{\text{\textsc{#1}}}
\newcommand{\forallae}[1]{{\textrm{\,for ${#1}$-a.e.~}}}
\newcommand{\dom}[1]{\mathrm{dom}(#1)}
\DeclareMathOperator{\eqdef}{\coloneqq}
\let\epsilon\varepsilon
\newcommand{\diff}{\mathop{}\!\mathrm{d}}					
\newcommand{\ddt}{\tfrac{\mathrm{d}}{\mathrm{d}t}}	
\newcommand{\ddu}{\tfrac{\mathrm{d}}{\mathrm{d}u}}
\newcommand{\tabs}[1]{\big\lvert#1\big\rvert}	
\newcommand{\abs}[1]{\left\lvert#1\right\rvert}
\newcommand{\set}[1]{\left\{#1\right\}}
\newcommand{\paren}[1]{\left(#1\right)}						
\newcommand{\tparen}[1]{\big({#1}\big)}
\newcommand{\tbraket}[1]{\big[#1\big]}
\newcommand{\tquadre}[1]{\big[#1\big]}
\newcommand{\seq}[1]{\paren{#1}}
\DeclareMathOperator{\emp}{\varnothing}
\newcommand{\N}{{\mathbb N}}
\newcommand{\R}{{\mathbb R}}
\newcommand{\slo}[2][]{\abs{\mrmD^-{#2}}_{#1}\!}
\newcommand{\slop}[3][]{\abs{\mrmD^-{#2}}_{#1}\!(#3)}
\newcommand{\comma}{\,\mathrm{,}\;\,}
\newcommand{\semicolon}{\,\mathrm{;}\;\,}
\newcommand{\fstop}{\,\mathrm{.}}
\let\temp\phi
\let\phi\varphi
\let\varphi\temp
\DeclarePairedDelimiter{\tond}{(}{)} 
\DeclarePairedDelimiter{\quadr}{[}{]}
\DeclarePairedDelimiter{\graf}{\{}{\}}
\DeclareMathOperator*{\argmin}{arg\,min}
\numberwithin{equation}{section}
\theoremstyle{plain}
\newtheorem{theorem}{Theorem}[section]
\newtheorem{lemma}[theorem]{Lemma}
\newtheorem{corollary}[theorem]{Corollary}
\theoremstyle{definition}
\newtheorem{definition}[theorem]{Definition}
\newtheorem*{defs*}{Definition}
\theoremstyle{remark}
\newtheorem{remark}[theorem]{Remark}
\newtheorem{ese}[theorem]{Example}
\newtheorem*{ass*}{Assumption}
\renewcommand{\paragraph}[1]{\medskip \emph{#1.}\ \ }
\def\tand{\quad{\rm and}\quad}
\begin{document}
	
	\title[Local conditions for global convergence]{Local conditions for global convergence\\of gradient flows and proximal point sequences\\in metric spaces}
	
	\author{Lorenzo Dello Schiavo}
	
	\author{Jan Maas}
	
	\author{Francesco Pedrotti}
	
	\address{IST Austria\\
		Am Campus 1\\
		3400 Klosterneuburg\\
		Austria
	}
	\email{lorenzo.delloschiavo@ist.ac.at}
	\email{jan.maas@ist.ac.at}
	\email{francesco.pedrotti@ist.ac.at}

	\begin{abstract}
		This paper deals with local criteria for the convergence to a global minimiser for gradient flow trajectories and their discretisations.
		To obtain quantitative estimates on the speed of convergence, we consider variations on the classical Kurdyka--{\L}ojasiewicz inequality for a large class of parameter functions. 
		Our assumptions are given in terms of the initial data, without any reference to an equilibrium point. 
		The main results are convergence statements for gradient flow curves and proximal point sequences to a global minimiser, together with sharp quantitative estimates on the speed of convergence.
		These convergence results apply in the general setting of lower semicontinuous functionals on complete metric spaces, generalising recent results for smooth functionals on $\R^n$. While the non-smooth setting covers very general spaces, it is also useful for (non)-smooth functionals on $\R^n$.
	\end{abstract}
	
	\keywords{gradient flows in metric spaces;
	proximal point method;
	Kurdyka--{\L}ojasiewicz inequality;
	Polyak--{\L}ojasiewicz inequality;
	Simon--{\L}ojasiewicz inequality;
	convergence rate}
	
	\subjclass[2020]{45J05, 49Q20 (primary), and 39B62, 37N40, 49J52, 65K10 (secondary)} 
	
	\maketitle

	\begin{center}
		\today
	\end{center}
	
	\setcounter{tocdepth}{1} 
	\tableofcontents

	\section{Introduction}
	
	For given $x_0 \in \R^n$ and $f \in \mrmC^2(\R^n)$ we  consider the 
	\emph{gradient flow} equation  
	\begin{align}
		\label{eq:grad-flow}
		\ddt y_t = - \nabla f(y_t)\comma
		\qquad
		y_0 = x_0 \fstop
	\end{align}
	It is of great interest in many applications to find conditions which guarantee convergence of  
	gradient-flow trajectories 
	$\seq{y_t}_{t \geq 0}$ 
	to a global minimiser of $f$ as $t \to \infty$,
	and to quantify the speed of convergence.
	This also applies to the associated discrete-time schemes, such as
	\emph{gradient descent} (or \emph{forward Euler}), the discrete-time scheme with step-size~$\tau > 0$ given by
	\begin{align}
		\label{eq:grad-descent}
		y_{k+1} = y_k - \tau \nabla f(y_k)\comma 
		\qquad
		y_0 = x_0 \comma
	\end{align}
	and the \emph{backward Euler} scheme
	\begin{align}
		\label{eq:back-euler}
		y_{k+1} = y_k - \tau \nabla f(y_{k+1})\comma 
		\qquad
		y_0 = x_0 \fstop
	\end{align}

	\subsection*{The Polyak--{\L}ojasiewicz condition}
	A very simple celebrated criterion for convergence to a global minimum is the 
	\emph{Polyak--{\L}ojasiewicz condition} \cite{Pol-1963},
	which requires neither the uniqueness of a minimiser nor the convexity of the function~$f$.
	The condition holds if, for some $\beta > 0$, 
	\begin{align*}
		|\nabla f(x)|^2 
		\geq 
		\beta (f(x) - f^\star)\comma \qquad x \in \R^n\comma
		\tag{P\L}
	\end{align*}
	where $f^\star$ is the global minimum of $f$, which is assumed to be attained.
	Since $\ddt f(y_t) = - |\nabla f(y_t)|^2$ 
	along any solution $y_t$ to the gradient-flow equation 
	$\ddt y_t = - \nabla f(y_t)$, 
	an application of Gronwall's inequality yields
	the exponential bound
	\begin{align*}
		f(y_t) - f^\star
		\leq
		e^{-\beta t} 
		\big( 
		f(y_0) - f^\star
		\big) \comma \qquad t \geq 0 \fstop
	\end{align*}
	Moreover, a short argument shows that 
	$y_t$ converges to a global minimiser $x^\star$, 
	with the bound 
	\begin{align*}
		|y_t - x^\star|^2
		\leq 
		\frac{4}{\beta} \big(f(y_t) - f^\star\big) \comma \qquad t\geq 0\fstop
	\end{align*}
	These inequalities together yield exponentially fast convergence to $x^\star$. 
	Analogous results hold for the associated gradient-descent scheme \eqref{eq:grad-descent} and for certain proximal-gradient methods   
	\cite{Karimi-Nutini-Schmidt:2016}. 
	Interestingly, in spite of its simplicity, it 
	has been argued in  
	\cite{Karimi-Nutini-Schmidt:2016} that the 
	P\L\mbox{}  condition 
	\guillemotleft\emph{is actually weaker than the 
		main conditions that have been explored to show linear convergence rates without strong convexity
		over the last 25 years.}\guillemotright

	\subsection*{The Kurdyka--{\L}ojasiewicz condition}
	An important generalization of the 
	{P\L} condition is the 
		\emph{Kurdyka--{\L}ojasiewicz inequality} (K{\L}), 
	that was introduced by {\L}ojasiewicz \cite{Loj-1963,Loj-1993} and later generalized by Kurdyka~\cite{Kur-1998}.

	\begin{definition}\label{def:KL:intro}
		Let $\theta\in  C([0,\infty)) \cap C^1((0, \infty))$
		satisfy 
			$\theta(0) = 0$ 
		and 
			$\theta'(u) > 0$ for $u > 0$.
		We say that the K{\L} inequality is satisfied in a neighbourhood $U$ of an equilibrium point 
			$x^\star \in \R^n$ 
		if
		\begin{align}
			\label{it:Kl:intro} 
			\theta'(f(x) - f(x^\star)) \cdot 
			|\nabla f(x)| \geq 1 \qquad\text{ for all } \; x\in U \cap \set{f > f(x^\star)}.
		\end{align}
	\end{definition}
	In applications, $\theta$ is often of the form
		$\theta(u) \coloneqq \frac{c}{\gamma} u^\gamma$ 
	with $\gamma \in (0,1]$ and $c>0$.
	In this case, \eqref{it:Kl:intro} reads as
	\[
	c \,|\nabla f(x)| \geq \tparen{f(x) - f(x^\star)}^{1-\gamma}\comma \qquad  x\in U \cap \set{ f > f(x^\star)}\fstop
	\]
	In particular, if 
		$\gamma = \frac12$ and $c = 1/\sqrt{\beta}$
	one recovers the {P\L} inequality.

	The K{\L} condition is a powerful tool to obtain convergence properties for gradient-flow solutions 
	and discrete schemes. 
	An important feature of the {K\L} condition is that the inequality is only required to hold \emph{locally}, on a suitable neighbourhood $U$ of an equilibrium point. 

	To obtain convergence results
	for gradient-flow trajectories to an equilibrium point,
	the {K\L} condition is often combined with 
	additional information,
	typically an upper bound 
	on the length of the trajectory,
	to ensure that the solution is eventually contained in~$U$;
	cf.~\cite{Sim-1983, Hauer-Mazon:2019} 
	for results of this type for gradient flows and 
	\cite{AttBol2009, AttBolRedSou2010, Att-Bol-Sva-2013, Bol-Dan-Ley-Maz-2010} 
	for discrete schemes. 
	
	Let us also remark that the {K\L} condition does not in general yield convergence to a global minimiser of~$f$, but merely to a stationary point.
	To deduce convergence to a global minimiser, 
	it is often required to know 
	\emph{a priori} 
	that the starting point is close enough to this minimiser
	(whose existence is often part of the assumption); 
	cf.~\cite[Thm.~10]{AttBolRedSou2010} and \cite[Thm.~2.12]{Att-Bol-Sva-2013} for such results for discrete schemes.

	\subsection*{A {P\L} condition around the starting point}
	A remarkable variant of the {P\L} condition was discussed by Oymak and Soltanolkotabi \cite{pmlr-v97-oymak19a}
	and 
	by 
	Chatterjee~\cite{Cha22} 
	for \emph{non-negative} functions~$f \in \mrmC^2(\R^n)$.
	For fixed $x_0 \in \R^n$, these authors consider the local quantity  
	\begin{align*}
		\alpha = \alpha(x_0, r) 
		\eqdef 
		\inf_{\substack{x\in B_r(x_0)\\f(x)\neq 0} }
		\frac{|\nabla f(x)|^2}{f(x)}\comma
	\end{align*}
	where $B_r(x_0)$ denotes the open ball of radius $r > 0$ around $x_0$.\footnote{In a general metric space, the closure~$\overline{B_r(x_0)}$ is a subset of the closed  ball~$\set{x\in X: \mssd(x,x_0)\leq r}$ and the inclusion may be strict.}
	The criterion in \cite{Cha22} requires that
	\begin{align}\label{eq:Chatterjee-intro}
		\alpha(x_0, r) > \frac{4 f(x_0)}{r^2}
	\end{align}
	for some $x_0 \in \R^n$ and some $r > 0$.
	In other words,
	the inequality $|\nabla f(x)|^2 \geq \beta f(x)$
	is imposed to hold for all $x \in B_r(x_0)$,
	with a sufficiently large constant $\beta$,
	namely, $\beta > 4 f(x_0) / r^2$.

	Under \eqref{eq:Chatterjee-intro}, 
	it is shown in~\cite{Cha22} that the unique gradient flow curve 
	$(y_t)_{t \geq 0}$ starting at $y_0 = x_0$ 
	stays within $B_r(x_0)$ for all times $t \geq 0$, 
	converges to a global minimiser 
	$x^\star \in \overline{B_r(x_0)}$, 
	and satisfies the exponential bounds
	\begin{align}
		\label{eq:Chatterjee-cont}
		f(y_t) 
		\leq
		e^{-\alpha t} 
		f(x_0) 
		\tand
		|y_t - x^\star|^2
		\leq 
		r^2 e^{-\alpha t} 
	\end{align}
	for $t \geq 0$, 
	where we write $\alpha = \alpha(x_0, r)$ for brevity.

	Like the {K\L} condition, \eqref{eq:Chatterjee-intro} is a local version of the {P\L} condition.  
	However, while the {K\L} condition involves information of $f$ in a neighbourhood of an equilibrium point (whose location is often unknown in applications), 
	\eqref{eq:Chatterjee-intro} is formulated in terms of the starting point $x_0$ of the gradient-flow trajectory.
	The existence of a global minimiser and the boundedness of the gradient-flow trajectory are not assumed; these statements are part of the conclusion. 
	The specific constant $\frac{4 f(x_0)}{r^2}$ in \eqref{eq:Chatterjee-intro} is important, as it ensures that the gradient-flow curve does not leave the ball $B_r(x_0)$, so that local information suffices to draw conclusions on the long-term behaviour.

	Chatterjee also proves analogous bounds for the gradient descent \eqref{eq:grad-descent} starting at $y_0 = x_0$, 
	namely
	\begin{align*}
		f(y_k) \leq (1 - \delta)^k f(x_0)
		\tand
		|y_k - x^\star| \leq r^2 ( 1 - \delta)^k  
	\end{align*}
	for all $k \in \N$ and any $\delta < \alpha \tau$,
	provided that the step-size $\tau > 0$ is sufficiently small, depending also on the size of the derivatives of $f$ in $B_r(x_0)$.
	Similar results for gradient descent were obtained previously in \cite{pmlr-v97-oymak19a}.
	Applications of \eqref{eq:Chatterjee-intro} to neural networks can be found in 
	\cite{pmlr-v97-oymak19a,Boursier-Pillaud-Vivien-Flammarion:2022,
	Cha22,
	Bombari:2022aa,
	Bartlett:2021aa}.

	\subsection{Main results}
	In this work we generalise some of the results of \cite{Cha22} in the following ways.
	First, we replace $\mathrm{C}^2$~functions on~$\R^n$ with lower semicontinuous functionals on complete metric spaces.
	Secondly, we replace the local P{\L}-like condition with a K{\L}-like assumption with a more general parameter function $\theta$.
	Thirdly, we prove convergence results for the proximal point method, which corresponds to the backward Euler scheme in smooth settings.

	Let~$(X,\mssd)$ be a complete metric space. In this generality, the \textsc{ode} \eqref{eq:grad-flow} does not have a direct interpretation, as the velocity of a curve and the gradient of a function are not defined. 
	However \eqref{eq:grad-flow} admits an equivalent variational characterisation, as a \emph{curve of maximal slope}, and this notion naturally extends to metric spaces.
	We refer to Section \ref{sec:CMS} for the definition of the metric slope $\slo{f}(x)$ and other concepts from analysis in metric spaces relevant to our work.
	Gradient flows in metric spaces are ubiquitous in applications; 
	notable examples are
	dissipative \textsc{pde}s 
	in the Wasserstein space 
	\cite{JorKinOtt98,AmbGigSav14} 
	and related gradient flows 
	on spaces of (probability) measures; see, e.g.,
	\cite{Dolbeault-Nazaret-Savare:2009,Maa11,Mielke:2011,Kondratyev-Vorotnikov:2018}.
	A systematic treatment can be found in the monograph~\cite{AmbGigSav08}.
	The metric point of view can also be useful to deal with non-differentiable functionals on $\R^n$; see \S\ref{sec:assumption} for some toy examples.
	
	We first define the functions appearing in the assumption and the main results.
	
	\begin{definition}[Parameter function]\label{d:Parameter1}
		We say that~$\theta\in C^1((0, \infty)) \cap C([0,\infty))$ is a \emph{parameter function} if
			 $\theta'(u) > 0$ for $u > 0$,
			 and 
			$\theta(0) = 0$.
		Furthermore, we consider the auxiliary functions $\eta: [0,\infty)  \to [-\infty,\infty)$
			and  $\Gamma : [0,\theta(\infty))  \to [-\infty,\infty)$ 
		defined by
		\begin{align*}
			\eta(u)
			 :=
			\int_1^u
			\big(\theta'(s)\big)^2
			\diff s
		\tand
		\Gamma(u)  
			:= (\eta \circ \theta^{-1})(u).
		\end{align*}
	\end{definition}

	The next definition contains a generalisation of \eqref{eq:Chatterjee-intro} to the metric setting for a general class of parameter functions; see Remark 
	\ref{rmk:comp:cha} below for a precise comparison. 
	
	\begin{definition}[Conditions {$(A)$} and {$(A')$}]\label{def:main-ass}
		For~$x_0\in\dom{f}$ and~$r>0$, we say that condition~{$(A)$} is satisfied with parameter function~$\theta$ (as in Definition \ref{d:Parameter1}) if
		\begin{equation}
			\label{eq:ass:cts}
			(\theta\circ f)(x_0)\leq r 
			\quad \text{and} \quad
			(\theta'\circ f)(x)\cdot \slo{f}(x)\geq 1 \comma \quad x \in B_r(x_0) \cap \set{0<f\leq f(x_0)}\fstop
		\end{equation}
		Similarly, we say that condition~{$(A')$} is satisfied if
		the first inequality in 
		\eqref{eq:ass:cts} 
		is replaced by the strict inequality 
		$(\theta\circ f)(x_0)< r$.
	\end{definition}

	Under Condition~{$(A)$}, 
	our first main result asserts that
	gradient-flow trajectories stay within in a bounded set and converge to a global minimum, with
	quantitative bounds on the rate of convergence.

	\begin{theorem}[Convergence of gradient flows]\label{thm:ConvRateIntro}
		Let $f : X \to [0, \infty]$ be proper and lower semicontinuous,
		and suppose that 
			$x_0\in\dom{f}$ and~$r>0$ 
		satisfy Condition~{$(A)$} 
		for some parameter function $\theta$.
		For some $T\in (0,\infty]$,
		let $\seq{y_t}_{t\in [0,T)}$ be a curve of maximal slope for~$f$ starting at~$x_0$. 
		Then:
		\begin{enumerate}[$(i)$]
			\item\label{i:thm-conv-1} \emph{(confinement)} $y_t \in \overline{B_r(x_0)}$
			for all $0\leq t<T$. Moreover, $y_t \in B_r(x_0)$ for all $0\leq t<T$ with $f(y_t) > 0$. 
			\item\label{i:thm-conv-2}  
			\emph{(convergence)} $y_T\eqdef \lim_{t\to T} y_t$ exists and belongs to $\overline{B_r(x_0)}$. Moreover, 
			\begin{align}\label{eq:theta-estimate-thm}
				(\theta\circ f)(y_s)- (\theta\circ f)(y_t)  \geq 
				\mssd(y_t,y_s)
			\end{align}
			for all $0\leq s\leq t \leq T$.
			In particular, $y_T\in B_r(x_0)$ if $f(y_T) > 0$. 
			\item \label{i:thm-conv-3} \emph{(convergence rates)}
				Set
			$t_*
			\coloneqq \inf\set{t\in [0,T): f(y_t)=0} \wedge T$.
			The following bounds hold for $0 \leq t \leq t_*$:
			\begin{align}
				\label{eq:d-rate}
				\Gamma\big(\mssd(y_t,y_T)\big)& \leq \Gamma(r) - t, 
				\\
				\label{eq:f-rate}
				(\eta\circ f)(y_t)		
				& \leq 
				(\eta\circ f)(x_0)
				- t.
			\end{align}
			Moreover, if $T=\infty$ then $f(y_\infty) = 0$.
		\end{enumerate}
	\end{theorem}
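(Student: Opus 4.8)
The plan is to reduce everything to the energy dissipation identity for curves of maximal slope together with the single pointwise bound furnished by Condition~$(A)$. Recall from Section~\ref{sec:CMS} that along a curve of maximal slope the map $t\mapsto f(y_t)$ is non-increasing and locally absolutely continuous, and that the energy identity forces $\abs{\dot y_t}=\slop{f}{y_t}$ and $\ddt f(y_t)=-\slop{f}{y_t}^2=-\abs{\dot y_t}\,\slop{f}{y_t}$ for a.e.\ $t$, where $\abs{\dot y_t}$ is the metric speed. In particular $f(y_t)\le f(x_0)$ for all $t$, so the constraint $f\le f(x_0)$ in Condition~$(A)$ is automatic. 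The core computation is then a chain rule: wherever $y_t\in B_r(x_0)$ and $0<f(y_t)\le f(x_0)$, so that $(\theta'\circ f)(y_t)\,\slop{f}{y_t}\ge 1$, one obtains
\[
\ddt (\theta \circ f)(y_t) = (\theta'\circ f)(y_t)\,\ddt f(y_t) = -(\theta'\circ f)(y_t)\,\abs{\dot y_t}\,\slop{f}{y_t} \le -\abs{\dot y_t},
\]
and, using $\eta'=(\theta')^2$,
\[
\ddt (\eta \circ f)(y_t) = \big((\theta'\circ f)(y_t)\big)^2\,\ddt f(y_t) = -\big((\theta'\circ f)(y_t)\,\slop{f}{y_t}\big)^2 \le -1 .
\]
These two differential inequalities drive the whole argument.

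The main obstacle is the \emph{confinement} in $(i)$, because the estimates above are only available while the curve is known to lie in $B_r(x_0)$; this requires a continuity/bootstrap argument. I would set $\phi(t)\coloneqq \mssd(y_t,x_0)+(\theta\circ f)(y_t)$, which is continuous with $\phi(0)=(\theta\circ f)(x_0)\le r$, and combine $\ddt\mssd(y_t,x_0)\le\abs{\dot y_t}$ (the distance is $1$-Lipschitz) with the first inequality to get $\phi'\le 0$ a.e.\ on the open set of times where $y_t\in B_r(x_0)$ and $f(y_t)>0$. To upgrade this to $\phi\le(\theta\circ f)(x_0)$ everywhere, I argue by contradiction: let $t^*$ be the first time $\phi$ exceeds $(\theta\circ f)(x_0)$, so $\phi(t^*)=(\theta\circ f)(x_0)$ by continuity. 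If $f(y_{t^*})>0$, then $\mssd(y_{t^*},x_0)=(\theta\circ f)(x_0)-(\theta\circ f)(y_{t^*})<(\theta\circ f)(x_0)\le r$, so $y_{t^*}\in B_r(x_0)$ strictly and, by continuity, $y_t\in B_r(x_0)$ with $f(y_t)>0$ on a two-sided neighbourhood of $t^*$; there $\phi$ is non-increasing, contradicting the choice of $t^*$. If instead $f(y_{t^*})=0$, monotonicity gives $f(y_t)=0$ for $t\ge t^*$, and the energy identity forces $\abs{\dot y_t}=0$ there, so the curve is constant and $\phi(t)=\mssd(y_{t^*},x_0)=(\theta\circ f)(x_0)$ for $t\ge t^*$, again a contradiction. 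Hence $\phi(t)\le(\theta\circ f)(x_0)\le r$ throughout, which gives $y_t\in\overline{B_r(x_0)}$, with strict membership $y_t\in B_r(x_0)$ whenever $f(y_t)>0$; this is $(i)$.

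With confinement in hand, the first differential inequality is valid on $[0,T)$ away from zeros of $f$, and integrating it between any $0\le s\le t$ yields $(\theta\circ f)(y_s)-(\theta\circ f)(y_t)\ge\int_s^t\abs{\dot y_u}\,\diff u\ge\mssd(y_s,y_t)$, i.e.\ \eqref{eq:theta-estimate-thm}; the case where $f$ vanishes on $[s,t]$ is handled through the constancy of the curve past its first zero of $f$ together with continuity. Since $t\mapsto(\theta\circ f)(y_t)$ is non-increasing and bounded below by $0$, it is Cauchy as $t\to T$, whence so is $(y_t)$ by \eqref{eq:theta-estimate-thm}; completeness of $X$ produces the limit $y_T\in\overline{B_r(x_0)}$, and lower semicontinuity of $f$ permits passing to the limit $t\to T$ in \eqref{eq:theta-estimate-thm}. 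The last claim in $(ii)$ is exactly as in the confinement step: $f(y_T)>0$ forces $\mssd(y_T,x_0)<(\theta\circ f)(x_0)\le r$.

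For the rates in $(iii)$, I integrate the second differential inequality on $[0,t]$ with $t\le t_*$ (there $f(y_u)>0$, so $\eta\circ f\circ y$ is absolutely continuous), giving $(\eta\circ f)(y_t)\le(\eta\circ f)(x_0)-t$, which is \eqref{eq:f-rate}. For \eqref{eq:d-rate}, note $\eta$ and $\theta^{-1}$ are increasing (as $\eta'=(\theta')^2>0$ and $\theta$ is strictly increasing), hence $\Gamma=\eta\circ\theta^{-1}$ is increasing; applying $\theta^{-1}$ and then $\eta$ to the bound $\mssd(y_t,y_T)\le(\theta\circ f)(y_t)$ (from \eqref{eq:theta-estimate-thm} between $t$ and $T$, since $(\theta\circ f)(y_T)\ge 0$) yields $\Gamma\big(\mssd(y_t,y_T)\big)\le(\eta\circ f)(y_t)\le(\eta\circ f)(x_0)-t$, and finally $(\theta\circ f)(x_0)\le r$ from Condition~$(A)$ gives $(\eta\circ f)(x_0)=\eta(f(x_0))\le\eta(\theta^{-1}(r))=\Gamma(r)$, completing \eqref{eq:d-rate}. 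Lastly, if $T=\infty$ and $t_*<\infty$, the curve is eventually constant with $f=0$; if $t_*=\infty$, then \eqref{eq:f-rate} forces $(\eta\circ f)(y_t)\to-\infty$, which is incompatible with $f(y_t)$ decreasing to a positive limit, so $f(y_t)\downarrow 0$ and lower semicontinuity gives $f(y_\infty)=0$.
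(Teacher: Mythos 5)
Your proposal is correct and follows essentially the same route as the paper's proof: the two differential inequalities you derive (via the chain rule, the energy dissipation equality, and Condition~$(A)$) are precisely the content of the paper's Lemma~\ref{l:ThetaEstimate}, and your handling of parts $(ii)$ and $(iii)$ --- the Cauchy property from monotonicity of $t\mapsto(\theta\circ f)(y_t)$, extension to $t=T$ by lower semicontinuity, integration of the energy inequality, and composing the bound $\mssd(y_t,y_T)\le(\theta\circ f)(y_t)$ with $\Gamma=\eta\circ\theta^{-1}$ together with $(\eta\circ f)(x_0)\le\Gamma(r)$ --- matches the paper step by step. The only divergence is cosmetic and concerns confinement: you run the bootstrap on the Lyapunov quantity $\phi(t)=\mssd(y_t,x_0)+(\theta\circ f)(y_t)$ and obtain a contradiction at its first crossing of the level $(\theta\circ f)(x_0)$, whereas the paper works with the first exit time from $B_r(x_0)$ and passes to the limit in the integrated distance bound; both arguments rest on the same dichotomy according to whether $f$ vanishes at the critical time, settled by the constancy statement of Lemma~\ref{l:TrivialFlows}.
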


	In the special case of Remark \ref{rmk:comp:cha}, the previous theorem yields the following generalisation of~\cite[Thm.~2.1]{Cha22} to the setting of metric spaces; see also Cor.~{\ref{cor:conv-rate-gf-special}} below for a version with more general parameter functions. 
	For $x_0 \in \dom f$ and $r > 0$
	we define
	\begin{align*}
		\alpha = \alpha(x_0, r) 
			\eqdef 
			\inf_{\substack{x\in B_r(x_0)\\0<f(x)\leq f(x_0)} }
			\frac{\slop{f}{x}^2}{f(x)} \, .
	\end{align*}

	\begin{corollary}
		\label{cor:chatt-analogue-cts}
		Let $f : X \to [0, \infty]$ be proper and lower semicontinuous,
		and suppose that 
		$\alpha(x_0,r) \geq 4 f(x_0) / r^2$
		for some 
		~$x_0\in\dom{f}$ and~$r>0$.
		For some $T\in (0,\infty]$,
		let $\seq{y_t}_{t\in [0,T)}$ be a curve of maximal slope for~$f$ starting at~$x_0$. 
		Then 
		$y_T\coloneqq \lim_{t\to T} y_t$ exists, 
		$y_t$ belongs to $\overline{B_r(x_0)}$ 
		for all  $t \in [0,T]$,
		and
		\begin{align*}
			\mssd(y_t,y_T) \leq r\, e^{-\alpha t/2} 
			\qquad \text{and} \qquad	
			f(y_t) \leq e^{-\alpha t} f(x_0)
		\end{align*}
		for all~$t \in [0, T ]$, 
		where, conventionally,~$e^{-\infty}\eqdef 0$. 
	\end{corollary}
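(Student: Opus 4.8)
The plan is to recognise the hypothesis $\alpha(x_0,r)\ge 4f(x_0)/r^2$ as a special case of Condition~$(A)$ and then read off the exponential rates from \eqref{eq:d-rate}--\eqref{eq:f-rate} by computing the auxiliary functions $\eta$ and $\Gamma$ explicitly. Writing $\alpha\eqdef\alpha(x_0,r)$ and assuming for the moment $0<\alpha<\infty$, I would take as parameter function the one singled out in Remark~\ref{rmk:comp:cha}, namely
\[
\theta(u)\eqdef\frac{2}{\sqrt{\alpha}}\,\sqrt{u}\comma\qquad u\ge 0\fstop
\]
This lies in the class of Definition~\ref{d:Parameter1}, since $\theta(0)=0$ and $\theta'(u)=\alpha^{-1/2}u^{-1/2}>0$ on $(0,\infty)$. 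To verify \eqref{eq:ass:cts} I would argue as follows. First, the very definition of $\alpha$ as an infimum gives $\slop{f}{x}^2\ge\alpha\,f(x)$ for every $x\in B_r(x_0)\cap\set{0<f\le f(x_0)}$, whence $(\theta'\circ f)(x)\cdot\slop{f}{x}=\alpha^{-1/2}f(x)^{-1/2}\,\slop{f}{x}\ge 1$ on this set; second, $(\theta\circ f)(x_0)=2\alpha^{-1/2}\sqrt{f(x_0)}\le r$, which is exactly the assumption $\alpha\ge 4f(x_0)/r^2$. Thus Condition~$(A)$ holds and Theorem~\ref{thm:ConvRateIntro} applies to the given curve, yielding at once the confinement $y_t\in\overline{B_r(x_0)}$ and the existence of $y_T\in\overline{B_r(x_0)}$.

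Next I would compute the auxiliary functions of Definition~\ref{d:Parameter1}. From $\theta'(s)^2=\alpha^{-1}s^{-1}$ one gets $\eta(u)=\int_1^u\alpha^{-1}s^{-1}\diff s=\alpha^{-1}\ln u$, while $\theta^{-1}(v)=\tfrac{\alpha}{4}v^2$ gives $\Gamma(v)=(\eta\circ\theta^{-1})(v)=\alpha^{-1}\ln\!\big(\tfrac{\alpha}{4}v^2\big)$. Substituting into \eqref{eq:f-rate} turns $(\eta\circ f)(y_t)\le(\eta\circ f)(x_0)-t$ into $\alpha^{-1}\ln f(y_t)\le\alpha^{-1}\ln f(x_0)-t$, i.e.\ $f(y_t)\le e^{-\alpha t}f(x_0)$; substituting into \eqref{eq:d-rate} turns $\Gamma(\mssd(y_t,y_T))\le\Gamma(r)-t$ into $\alpha^{-1}\ln\!\big(\tfrac{\alpha}{4}\mssd(y_t,y_T)^2\big)\le\alpha^{-1}\ln\!\big(\tfrac{\alpha}{4}r^2\big)-t$, i.e.\ $\mssd(y_t,y_T)^2\le r^2e^{-\alpha t}$. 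These are precisely the two asserted bounds, so far on the interval $[0,t_*]$ with $t_*\eqdef\inf\set{t:f(y_t)=0}\wedge T$.

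Two points then remain. To extend the bounds from $[0,t_*]$ to all of $[0,T]$, I would use the monotonicity estimate \eqref{eq:theta-estimate-thm}: for $t\in[t_*,T]$ one has $f(y_t)=0$ (since $f\ge 0$ and $t\mapsto f(y_t)$ is nonincreasing with $f(y_{t_*})=0$), and applying \eqref{eq:theta-estimate-thm} with $s=t_*$ together with $\theta(0)=0$ forces $\mssd(y_{t_*},y_t)=0$; hence the curve is stationary after $t_*$ and $y_T=y_{t_*}$, so both bounds hold trivially on $[t_*,T]$, while the endpoint $t=T$ follows by continuity of $\mssd(\emparg,y_T)$ and by lower semicontinuity of $f$ (using $f(y_\infty)=0$ when $T=\infty$). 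Finally, the excluded cases $\alpha=\infty$ (in particular $f(x_0)=0$) I would handle by running the finite-$\alpha$ argument with an arbitrary $\tilde\alpha\in[4f(x_0)/r^2,\infty)$ in place of $\alpha$: since $\tilde\alpha\le\alpha$ only weakens the slope inequality in \eqref{eq:ass:cts}, Condition~$(A)$ still holds with $\theta(u)=2\tilde\alpha^{-1/2}\sqrt u$, giving $f(y_t)\le e^{-\tilde\alpha t}f(x_0)$ and $\mssd(y_t,y_T)\le r\,e^{-\tilde\alpha t/2}$; letting $\tilde\alpha\to\infty$ and invoking the convention $e^{-\infty}\eqdef 0$ closes the proof. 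I expect the only genuinely delicate bookkeeping to be this boundary analysis — the passage from $[0,t_*]$ to $[0,T]$ and the limiting argument for $\alpha=\infty$ — since the core of the result is simply the identification of the correct $\theta$ followed by two elementary integrations.
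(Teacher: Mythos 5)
Your proof is correct and follows essentially the same route as the paper: recognising the hypothesis as Condition~$(A)$ with $\theta(u)=2\sqrt{u/\alpha}$ (Remark~\ref{rmk:comp:cha}), invoking Theorem~\ref{thm:ConvRateIntro}, and rearranging \eqref{eq:d-rate}--\eqref{eq:f-rate} after computing $\eta$ and $\Gamma$ explicitly, exactly as in Corollary~\ref{cor:conv-rate-gf-special} with $\gamma=\tfrac12$, $c=1/\sqrt{\alpha}$. Your explicit treatment of the extension from $[0,t_*]$ to $[0,T]$ and of the degenerate case $\alpha=\infty$ is sound and in fact more careful than the paper, which leaves these points implicit.
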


Various works deal with convergence of gradient-flow trajectories under a K{\L} condition in the setting of metric spaces \cite{Blanchet-Bolte:2018,Hauer-Mazon:2019}; see also \cite{Bol-Dan-Ley-Maz-2010,AttBol2009, AttBolRedSou2010} for related work on proximal point sequences. 
Applications have been found to convergence of mean-field birth-death processes \cite{Liu-Majka-Szpruch:2022} 
and to swarm gradient dynamics \cite{Bolte-Miclo-Villeneuve:2022}.

The main estimates in our paper are obtained by adapting known arguments from,
	e.g.,~\cite{Blanchet-Bolte:2018,Hauer-Mazon:2019}.
	However, as in \cite{pmlr-v97-oymak19a,Cha22}, our point of view differs from these works, as we work under a local condition in terms of the starting point without referring to an equilibrium point in the assumption.

	\subsubsection*{Discrete schemes}
	
	In the general setting of metric spaces, we are not aware of any way to formulate a forward Euler scheme \eqref{eq:grad-descent}.
	However, the backward Euler scheme admits an equivalent metric formulation as 
	a minimising movement scheme (or proximal point method).
	This scheme was originally introduced by Martinet \cite{Mar1970} and Rockafellar \cite{Roc1976} as a natural regularisation method in optimisation problems:
	\begin{align*} \label{eq:ProximalPoint}
		y_{k+1} \in 
		\argmin_{x \in X}
		\bigg\{ f(x) + \frac{1}{2\tau}
		\mssd(y_k,x)^2
		\bigg\}\comma
		\qquad
		y_0 = x_0\fstop
	\end{align*}
	Any (finite or infinite) sequence $\seq{y_k}_{k }$ arising in this way is called a \emph{proximal point sequence} (or \emph{$\tau$-minimising movement sequence}).

	Our second main result is an analogue of Theorem \ref{thm:ConvRateIntro} 
	for the proximal point method, under the slightly stronger assumption {$(A')$}.

	\begin{theorem}\label{t:conv-disc-new}
		Let $f : X \to [0, \infty]$ be proper and lower semicontinuous,
		and suppose that 
			$x_0\in\dom{f}$ and~$r>0$ 
		satisfy Condition~{$(A')$} 
		for some parameter function $\theta$.
		Suppose further that there exists 
		$\bar{\tau} > 0$ 
		such that, for all 
		$x \in B_r(x_0)\cap \graf*{f\leq f(x_0)}$ 
		and $\tau \in (0, \bar \tau)$, 
		the functional
		\begin{align*}
			X \ni y \longmapsto f(x) + \frac{1}{2 \tau} \mssd(x,y)^2
		\end{align*}
		has at least one global minimiser.
		Then
		there exists an infinite 
		proximal point sequence
		starting from $x_0$,
		for any step-size
		$\tau < \bar{\tau}$. 
		Moreover, for any such sequence $(y_k)_{k=0}^\infty$, 
		the following statements hold:
		\begin{enumerate}[$(i)$]
			\item \emph{(confinement)}  
			\label{it: confinement discrete scheme} 
			$y_k \in B_r(x_0)$ for all $k \geq 0$;
			\item \emph{(convergence)}
			\label{it: convergence discrete scheme} 
			$y_\infty \eqdef \lim_{k\to \infty} y_k$ 
			exists
			and belongs to $B_r(x_0)$.
			Moreover, 
			$f(y_\infty)=0$;
			\item \emph{(distance bound)}
			\label{it: dist bound discrete} 
			For all $0\leq i\leq j < \infty$ we have
			\begin{align}
				\mssd(y_i, y_j) \leq (\theta\circ f)(y_i) - (\theta\circ f)(y_j)
					\quad\text{and} \quad
					\mssd(y_i, y_\infty) \leq (\theta\circ f)(y_i) \fstop
			\end{align}
		\end{enumerate}
	\end{theorem}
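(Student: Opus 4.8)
The plan is to reduce the whole statement to a single sharp \emph{one-step estimate},
\[
  \mssd(y_k,y_{k+1})\ \le\ (\theta\circ f)(y_k)-(\theta\circ f)(y_{k+1}),
\]
valid whenever the step $y_k\mapsto y_{k+1}$ takes place inside $B_r(x_0)$, and then to telescope it exactly as in the continuous Theorem~\ref{thm:ConvRateIntro}. Granting the one-step estimate together with confinement, the first bound in~$(iii)$ follows by the triangle inequality, $\mssd(y_i,y_j)\le\sum_{k=i}^{j-1}\mssd(y_k,y_{k+1})\le(\theta\circ f)(y_i)-(\theta\circ f)(y_j)$. Since $f(y_k)$ is non-increasing and bounded below, $(\theta\circ f)(y_k)$ converges, so $(y_k)$ is Cauchy and $y_\infty$ exists by completeness of $X$; letting $j\to\infty$ and using $f(y_\infty)=0$ (shown below) gives $\theta(f(y_j))\to\theta(0)=0$ and hence the second bound $\mssd(y_i,y_\infty)\le(\theta\circ f)(y_i)$.

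Two elementary facts drive the one-step estimate. Testing minimality of $y_{k+1}$ against the competitor $z=y_k$ gives the \emph{energy inequality} $f(y_{k+1})+\tfrac{1}{2\tau}\mssd(y_k,y_{k+1})^2\le f(y_k)$, so $f$ decreases along the sequence and stays $\le f(x_0)$. Testing minimality against points $z\to y_{k+1}$ and expanding $\mssd(y_k,z)^2$ yields the \emph{slope bound} $\slop{f}{y_{k+1}}\le \mssd(y_k,y_{k+1})/\tau$. Combined with {$(A')$} these only produce the one-step estimate with a spurious factor $2$; on $f(x)=\tfrac{\beta}{2}x^2$ on $\R$ the estimate is in fact an \emph{equality}, so the sharp constant is genuinely needed — and is needed already for confinement, since the budget $\theta(f(x_0))<r$ leaves no slack for a factor $2$.

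To recover the sharp constant I would replay the continuous argument along the \emph{De~Giorgi variational interpolation} $(\tilde y_s)_{s\in(0,\tau]}$, i.e.\ $\tilde y_s\in\argmin_z\set{f(z)+\tfrac{1}{2s}\mssd(y_k,z)^2}$, so that $\tilde y_{\tau}=y_{k+1}$ and $\tilde y_s\to y_k$ as $s\downarrow0$. Writing $D(s)\eqdef\mssd(y_k,\tilde y_s)$, the envelope (De~Giorgi) identity gives, for a.e.\ $s$, $-\dds f(\tilde y_s)=\tfrac{D(s)}{s}\,\dds D(s)$ with $D$ non-decreasing and $D(0^+)=0$, while the slope bound at parameter $s$ reads $\slop{f}{\tilde y_s}\le D(s)/s$. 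As long as $\tilde y_s\in B_r(x_0)\cap\set{0<f\le f(x_0)}$, Condition~{$(A')$} gives $(\theta'\circ f)(\tilde y_s)\,\slop{f}{\tilde y_s}\ge1$, hence the pointwise inequality $(\theta'\circ f)(\tilde y_s)\,\tfrac{D(s)}{s}\ge1$, and therefore $-\dds(\theta\circ f)(\tilde y_s)=(\theta'\circ f)(\tilde y_s)\tfrac{D(s)}{s}\dds D(s)\ge\dds D(s)$. Integrating over $(0,\tau]$ yields exactly $(\theta\circ f)(y_k)-(\theta\circ f)(y_{k+1})\ge D(\tau)=\mssd(y_k,y_{k+1})$. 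This is the verbatim discrete analogue of the continuous computation, with $D(s)/s$ in the role of the slope and $\dds D(s)$ in the role of the metric speed.

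Confinement~$(i)$ and the applicability of {$(A')$} along the interpolant are then handled together by a first-exit induction: assuming $y_k\in B_r(x_0)$ with $\mssd(x_0,y_k)\le\theta(f(x_0))-(\theta\circ f)(y_k)$, the interpolant starts inside $B_r(x_0)$, and as long as it remains there the displayed computation applies and forces $\mssd(x_0,\tilde y_s)\le\mssd(x_0,y_k)+D(s)\le\theta(f(x_0))<r$; hence the interpolant can never reach $\partial B_r(x_0)$, so $y_{k+1}\in B_r(x_0)$ and the inductive bound propagates — the strict inequality in {$(A')$} is precisely what closes this loop, and also gives $y_\infty\in B_r(x_0)$. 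Existence of an infinite sequence follows since every iterate then lies in $B_r(x_0)\cap\set{f\le f(x_0)}$, where a minimiser is assumed to exist for $\tau<\bar\tau$. Finally $f(y_\infty)=0$: telescoping gives $\sum_k\mssd(y_k,y_{k+1})\le\theta(f(x_0))<\infty$, so $\mssd(y_k,y_{k+1})\to0$ and, by the slope bound, $\slop{f}{y_{k+1}}\to0$; were $\ell\eqdef\lim_k f(y_k)$ positive, continuity of $\theta'$ would give $(\theta'\circ f)(y_{k+1})\to\theta'(\ell)<\infty$, contradicting the {$(A')$}~bound $\slop{f}{y_{k+1}}\ge1/(\theta'\circ f)(y_{k+1})$; thus $\ell=0$ and lower semicontinuity yields $f(y_\infty)=0$. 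The main obstacle throughout is making the De~Giorgi interpolation and the envelope identity $-\dds f(\tilde y_s)=\tfrac{D(s)}{s}\dds D(s)$ rigorous for merely lower semicontinuous $f$ on a complete metric space — monotonicity and a.e.\ differentiability of $s\mapsto D(s)$ and $s\mapsto f(\tilde y_s)$, plus enough regularity of $s\mapsto\tilde y_s$ to justify the first-exit step — which is exactly the minimising-movement machinery of \cite{AmbGigSav08} that I would cite rather than reprove.
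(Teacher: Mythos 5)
Your top-level architecture (sharp one-step bound $\mssd(y_k,y_{k+1})\le(\theta\circ f)(y_k)-(\theta\circ f)(y_{k+1})$, telescoping, Cauchy property, and the endgame $\slop{f}{y_k}\to0$ plus {$(A')$} forcing $f(y_k)\to0$) coincides with the paper's, and your observation that the naive energy-plus-slope estimate loses a factor $2$ is exactly right. But the core of your proof --- deriving the sharp one-step bound by integrating a differential inequality along the De~Giorgi interpolation --- has a genuine gap. The a.e.\ identity $-\tfrac{\diff}{\diff s}f(\tilde y_s)=\tfrac{D(s)}{s}D'(s)$ is fine, but the quantities $s\mapsto D(s)=\mssd(y_k,\tilde y_s)$ and $s\mapsto f(\tilde y_s)$ are only \emph{monotone}, not absolutely continuous: the minimiser selection $s\mapsto\tilde y_s$ can jump, because minimisers of the Moreau--Yosida functional need not be unique (the paper's own Example \ref{ex:non-uniqueness-MMS} exhibits a two-valued $J_\tau$). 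What the minimising-movement machinery of \cite{AmbGigSav08} gives you is Lipschitz regularity of the \emph{envelope} $\varphi(s)=f(\tilde y_s)+D(s)^2/(2s)$ and the integrated identity \eqref{eq:DeGiorgi}; it does \emph{not} give absolute continuity of the two summands separately, and citing it cannot repair this. Concretely, your integration step needs
\begin{align*}
	\int_0^\tau D'(s)\diff s \;\ge\; D(\tau)-D(0^+)\comma
\end{align*}
which is false for a general non-decreasing $D$ (the valid inequality for monotone functions goes the other way: jumps and singular parts carry mass invisible to the a.e.\ derivative). The same discontinuity breaks your first-exit confinement argument: a jumping interpolant can leave $B_r(x_0)$ without ever lying on $\partial B_r(x_0)$.

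Closing the gap would require a separate estimate at each jump, i.e.\ comparing two distinct minimisers $z^-,z^+\in J_{s_0}(y_k)$ and showing $\theta(f(z^-))-\theta(f(z^+))\ge \mssd(y_k,z^+)-\mssd(y_k,z^-)$. One can check this when $\theta$ is concave (then $\theta'$ on the jumped-over interval of $f$-values is bounded below by $\theta'(f(z^-))\ge s_0/\mssd(y_k,z^-)$ via \eqref{eq:slope-bound} and {$(A')$}), but the theorem allows arbitrary parameter functions, and on the interval $(f(z^+),f(z^-))$ --- values which $f$ may simply not attain --- Condition {$(A')$} imposes nothing on $\theta'$, so no local argument of the kind you propose controls the jump. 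This is precisely where the paper's proof departs from the local/differential picture: Lemma \ref{l:dist-bound-disc} obtains the sharp one-step bound globally, by applying Ioffe's metric-regularity Lemma \ref{lem:Ioffe-lemma} to $g=\theta\circ f$ (with the chain rule of Lemma \ref{l: chain rule for slope}) to produce a near-optimal point $\bar x$ of the sublevel set $\set{f\le f(y_{N+1})}$ at distance at most $(\theta\circ f)(y_N)-(\theta\circ f)(y_{N+1})$ from $y_N$, and then tests the minimality of $y_{N+1}$ against $\bar x$; the inductively propagated bound $\mssd(x_0,y_N)\le(\theta\circ f)(x_0)-(\theta\circ f)(y_N)$ together with the strict inequality in {$(A')$} supplies the radius slack $(1+\epsilon)$ needed to apply Ioffe's lemma inside $B_r(x_0)$. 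You would need to replace your interpolation step by an argument of this global type (or restrict to concave $\theta$) for the proof to go through.
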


	In the particular case where $\theta$ takes the form $\theta(u) \coloneqq 2c\sqrt{u}$, we obtain the following result. 
	In this case we also obtain an estimate for the speed of convergence of $f(y_k)$ to $0$.
	Other special cases of Theorem \ref{t:conv-disc-new} are presented in Corollary \ref{cor:conv-rate-power-dis} below.

	\begin{corollary}[see Cor. \ref{cor:conv-rate-power-dis}]\label{cor-disc-square-root}
		Let $f : X \to [0, \infty]$ be proper and lower semicontinuous,
		and suppose that 
		and suppose that 
		$\alpha(x_0,r) > 4 f(x_0) / r^2$
		for some 
		~$x_0\in\dom{f}$ and~$r>0$.
		Suppose further that there exists 
		$\bar{\tau} > 0$ 
		such that, for all 
		$x \in B_r(x_0)\cap \graf*{f\leq f(x_0)}$ 
		and $\tau \in (0, \bar \tau)$, 
		the functional
		\begin{align*}
			X \ni y \longmapsto f(x) + \frac{1}{2 \tau} \mssd(x,y)^2
		\end{align*}
		has at least one global minimiser.
		Then
		there exists an infinite 
		proximal point sequence
		starting from $x_0$,
		for any step-size
		$\tau < \bar{\tau}$. 	
		Moreover, for any such sequence, 
		the following statements hold:
		\begin{enumerate}[$(i)$]
			\item \emph{(confinement)}  
			\label{it: confinement discrete scheme cor} 
			$y_k \in B_r(x_0)$ for all $k \geq 0$;
			\item \emph{(convergence)}
			\label{it: convergence discrete scheme cor} 
			$y_\infty \eqdef \lim_{k\to \infty} y_k$ 
			exists
			and belongs to $B_r(x_0)$.
			Moreover, 
			$f(y_\infty)=0$;
			\item \emph{(convergence rates)}
			\label{it: convergence rates discrete scheme} 
			The following bounds hold for all $k \geq 0$:
			\begin{align*}
				f(y_k) 
				& \leq 
				\big( 1 + {\alpha \tau} \big)^{-k} f(x_0)
				 \tand
				\mssd(y_k,y_\infty) 
				\leq 
				\big( 1 + {\alpha \tau} \big)^{-k/2} \, r \fstop
			\end{align*}
		\end{enumerate}
	\end{corollary}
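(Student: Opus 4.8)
The plan is to deduce the corollary from Theorem~\ref{t:conv-disc-new} by specialising the parameter function to $\theta(u)\eqdef 2c\sqrt{u}$ with $c\eqdef 1/\sqrt{\alpha}$, where $\alpha\eqdef\alpha(x_0,r)$, and then to upgrade the distance bounds produced by that theorem to the explicit geometric rate by means of a sharp one-step estimate for the proximal step.

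First I would verify Condition~$(A')$ for this $\theta$. Since $\theta'(u)=c\,u^{-1/2}$, the slope inequality $(\theta'\circ f)(x)\cdot\slo{f}(x)\geq 1$ on $B_r(x_0)\cap\set{0<f\leq f(x_0)}$ is equivalent to $\slop{f}{x}^2/f(x)\geq 1/c^2=\alpha$, which holds on that set by the very definition of $\alpha(x_0,r)$; and the threshold $(\theta\circ f)(x_0)=2\sqrt{f(x_0)}/\sqrt{\alpha}<r$ is exactly the hypothesis $\alpha(x_0,r)>4f(x_0)/r^2$. The existence-of-minimiser assumption is precisely the one required by Theorem~\ref{t:conv-disc-new}, so the theorem applies and already yields the existence of an infinite proximal point sequence, the confinement~(i), the convergence~(ii) together with $f(y_\infty)=0$, and the distance bounds $\mssd(y_i,y_j)\leq(\theta\circ f)(y_i)-(\theta\circ f)(y_j)$ and $\mssd(y_i,y_\infty)\leq(\theta\circ f)(y_i)$. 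In particular $f(y_k)$ is non-increasing with $f(y_k)\leq f(x_0)$, so every $y_k$ lies in $B_r(x_0)\cap\set{f\leq f(x_0)}$.

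The remaining, and main, point is the sharp rate $f(y_{k+1})\leq(1+\alpha\tau)^{-1}f(y_k)$. This does \emph{not} follow from the crude descent estimate alone: combining $f(y_k)-f(y_{k+1})\geq\frac{1}{2\tau}\mssd(y_k,y_{k+1})^2$ with the slope bound $\slop{f}{y_{k+1}}\leq\mssd(y_k,y_{k+1})/\tau$ and the PL inequality $\slop{f}{y_{k+1}}^2\geq\alpha f(y_{k+1})$ only gives the factor $(1+\alpha\tau/2)$, losing a factor of two and thus failing to match the continuous rate $e^{-\alpha t}$ of Corollary~\ref{cor:chatt-analogue-cts}. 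To recover the exact constant I would use the De~Giorgi variational interpolation: for fixed $k$ and $s\in(0,\tau]$ let $\tilde y_s$ minimise $y\mapsto f(y)+\frac{1}{2s}\mssd(y_k,y)^2$, so that $\tilde y_\tau=y_{k+1}$ and $\tilde y_s\to y_k$ as $s\to 0^+$. Using the standard facts that $s\mapsto f(\tilde y_s)$ is non-increasing, $s\mapsto\mssd(y_k,\tilde y_s)$ is non-decreasing, the Moreau envelope $G(s)\eqdef f(\tilde y_s)+\frac{1}{2s}\mssd(y_k,\tilde y_s)^2$ satisfies $G'(s)=-\frac{1}{2s^2}\mssd(y_k,\tilde y_s)^2$ with $G(0^+)=f(y_k)$, and the slope bound $\slop{f}{\tilde y_s}\leq\mssd(y_k,\tilde y_s)/s$, one obtains
\[
	f(y_k)-f(y_{k+1})=\int_0^\tau\frac{\mssd(y_k,\tilde y_s)^2}{2s^2}\diff s+\frac{\mssd(y_k,y_{k+1})^2}{2\tau}\fstop
\]
Bounding $\mssd(y_k,\tilde y_s)^2\geq s^2\slop{f}{\tilde y_s}^2\geq s^2\alpha f(\tilde y_s)\geq s^2\alpha f(y_{k+1})$ in the integral and $\mssd(y_k,y_{k+1})^2\geq\tau^2\alpha f(y_{k+1})$ in the boundary term, each contributes $\frac{\alpha\tau}{2}f(y_{k+1})$, so $f(y_k)-f(y_{k+1})\geq\alpha\tau\,f(y_{k+1})$, i.e.\ $f(y_{k+1})\leq(1+\alpha\tau)^{-1}f(y_k)$. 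Iterating gives $f(y_k)\leq(1+\alpha\tau)^{-k}f(x_0)$, and substituting into $\mssd(y_k,y_\infty)\leq(\theta\circ f)(y_k)=2c\sqrt{f(y_k)}\leq(1+\alpha\tau)^{-k/2}\cdot 2\sqrt{f(x_0)}/\sqrt{\alpha}<(1+\alpha\tau)^{-k/2}r$ yields the distance rate.

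The principal obstacle is justifying the PL inequality at \emph{every} interpolant $\tilde y_s$, which requires $\tilde y_s\in B_r(x_0)\cap\set{0<f\leq f(x_0)}$. The sublevel condition is immediate from the monotonicity of $f(\tilde y_s)$ (the degenerate case $f(y_{k+1})=0$ being trivial), while ball membership follows by combining $\mssd(y_k,\tilde y_s)\leq\mssd(y_k,y_{k+1})$ with the distance bounds of Theorem~\ref{t:conv-disc-new}: indeed $\mssd(x_0,\tilde y_s)\leq\mssd(x_0,y_k)+\mssd(y_k,\tilde y_s)\leq[2c\sqrt{f(x_0)}-2c\sqrt{f(y_k)}]+2c\sqrt{f(y_k)}=2c\sqrt{f(x_0)}<r$. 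The secondary technical burden is the rigorous metric-space justification of the interpolation identities used above; these are classical facts of the minimising-movement theory, but should be stated carefully under the standing existence-of-minimiser hypothesis.
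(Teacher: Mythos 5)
Your proposal is correct and follows essentially the same route as the paper: the paper also deduces this corollary from Theorem \ref{t:conv-disc-new} with $\theta(u)=2c\sqrt{u}$, and obtains the sharp one-step decay $f(y_k)-f(y_{k+1})\geq \alpha\tau f(y_{k+1})$ via De Giorgi's interpolation identity, the slope bound $\mssd(y_k,z_s)\geq s\slop{f}{z_s}$, Condition~$(A')$ at the interpolants, and monotonicity of $f(z_s)$ — this is precisely Lemma \ref{lem:concave-theta} (stated there for general concave $\theta$), including your observation that skipping De Giorgi loses a factor of two. The only cosmetic difference is how the interpolants are confined to $B_r(x_0)$: you use monotonicity of $s\mapsto\mssd(y_k,\tilde y_s)$ plus the triangle inequality, while the paper invokes Lemma \ref{l:dist-bound-disc} with variable step-sizes; both are valid.
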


\subsection*{Plan of the work}
Preliminaries on gradient flows in metric spaces are collected in~\S\ref{sec:CMS}.
Our main results in the continuum case are proved in \S\ref{sec:ConvGF} and extended to piecewise gradient-flow curves in~\S\ref{sec:Extension}.
In~\S\ref{sec:assumption} we discuss Condition {$(A)$} and its variants together with some examples.
Our main results in the discrete case are proved in~\S\ref{sec: convergence of discrete scheme}.
Auxiliary results are presented in the appendices.

	\subsection*{Acknowledgement} {\small 
		The authors gratefully acknowledges support by the European Research Council (ERC) under the European Union's Horizon 2020 research and innovation programme (grant agreement No.~716117). 
		LDS gratefully acknowledges funding of his current position by the Austrian Science Fund (FWF), ESPRIT Fellowship Project 208.
		JM also acknowledges support by the Austrian Science Fund (FWF), Project SFB F65.
	}

	\section{Gradient flows in metric spaces}\label{sec:CMS}
	In this section we collect some known facts about gradient flows in metric spaces.
	We assume throughout that ~$(X,\mssd)$ is a complete metric space and $J \subset \R$ is a (not necessarily open, nor closed) interval.
	
	Let $1 \leq p < \infty$. 
	A measurable function $m  : J \to \R$ belongs to  ~$L^p_\loc(J)$ if ${\bf 1}_K m \in L^p(J)$ for every \emph{compact} set~$K\subset J$.
	A curve~$\seq{y_t}_{t\in J}$ is said to be locally $p$-absolutely continuous on~$J$ ---in short: it belongs to~$\AC^p_\loc(J;X)$--- if there exists~$m\in L^p_\loc(J)$ so that
	\begin{equation}\label{eq:MSpeed}
		\mssd(y_t,y_s)
			\leq 
		\int_s^t m(r)\diff r 
	\end{equation}
	for all $s,t \in J$ with $s < t$.
	Similarly, we write ~$\seq{y_t}_{t\in J} \in \AC^p(J;X)$ if~$m\in L^p(J)$.
		
	Whenever~$\seq{y_t}_{t\in J}$ is in~$\AC^1_\loc(J;X)$, the \emph{metric speed}
	\begin{align}
		\label{eq:metric-speed}
		\abs{\dot y_t}\eqdef \lim_{s\to t} \frac{\mssd(y_s,y_t)}{\abs{s-t}}
	\end{align}
	exists for a.e.~$t \in J$.
	Furthermore, the metric speed coincides a.e.~with the smallest function ~$m$ satisfying~\eqref{eq:MSpeed}; see, e.g.,~\cite[Thm.~1.1.2]{AmbGigSav08}.
	
	\begin{remark}
		Every curve in $\AC^1_\loc(J;X)$ is continuous on $J$.
		Note however that if $\seq{y_t}_{t\in J}\in \AC^1_\loc(J;X)$ with ~$J=(a,b]$ for some~$a<b$, then the existence of~$ 
		\lim_{t\downarrow a} y_t$ does not imply that~$\seq{y_t}_{t\in J} \in \AC^1(J;X)$.
	\end{remark}

	\medskip
	
	The \emph{domain} of a function 
	$f\colon X \to (-\infty, \infty]$ 
	is the set
	\[
	\dom{f}\eqdef\set{x\in X: f(x)<\infty}\fstop
	\]
	In order to rule out trivial statements, we always assume that~$f$ is \emph{proper}, i.e.,~$\dom{f}\neq \emp$.
	The (\emph{descending}) \emph{slope} of~$f$ at~$x \in X$ is the quantity
	\[
	\slo{f}(x)\eqdef \limsup_{y\to x} \frac{\tbraket{f(y)-f(x)}_-}{\mssd(y,x)} \comma
	\]
	where~$a_- \eqdef \max\set{-a,0}$ denotes the negative part of $a \in \R$.
	Conventionally,~$\slo{f}(x)\eqdef 0$ when~$x \in \dom{f}$ is isolated, and~$\slo{f}(x)=+\infty$ if~$x\notin \dom{f}$.
	
	\subsection{Gradient flows in metric spaces: curves of maximal slope}
	The next definition provides a natural notion of gradient flow in a metric space; cf.~\cite{AmbGigSav08} for an extensive treatment. 
	The motivation for this definition comes from the following simple argument in Euclidean space. 
	Let $f\colon \R^n \to \R$ be a smooth function. For any smooth curve 
	$(u_t)_{ t \in [0,T)}$ in $\R^n$ and $t \in (0,T)$, 
	we have
	\begin{align*}
		- \ddt f(u_t)
		= 
		-	\nabla f(u_t) \cdot \dot u_t
		\leq 
		\tfrac12 |\nabla f(u_t)|^2
		+ 	\tfrac12 |\dot u_t|^2 \fstop
	\end{align*}
	Since equality holds if and only if $\dot u_t = - \nabla f(u_t)$, the reverse inequality 
	$- \ddt f(u_t)
	\geq 
	\frac12 |\nabla f(u_t)|^2
	+ \frac12 |\dot u_t|^2$
	is an equivalent formulation of the gradient-flow equation, which admits a natural generalisation to metric spaces.

	\begin{definition}[curve of maximal slope, gradient flow, cf.~{\cite[Dfn.~4.1]{MurSav20}}]\label{d:CMS}
		Let~$J\subset \R$ be an interval and let 
		$f : X \to (-\infty, \infty]$
		be proper.
		We say that~$\seq{y_t}_{t\in J}$ is a \emph{curve of maximal slope 
			for~$f$} if
		\begin{enumerate}[$(a)$]
			\item\label{i:d:CMS:1} $\seq{y_t}_{t \in J} \in \AC^1_\loc(J;X)$;
			\item\label{i:d:CMS:2} $\seq{f(y_t)}_{t \in J} \in \AC^1_\loc(J;\R)$;
			\item\label{i:d:CMS:3} the following \emph{Energy Dissipation Inequality} holds:
			\begin{equation}\tag{$\mathsc{edi}$}\label{eq:EDI}
				- \ddt f(y_t) 
				\geq
				\tfrac{1}{2} \abs{\dot y_t}^2
				+ \tfrac{1}{2}\slo{f}(y_t)^2 \quad \forallae{\diff t} t\in J\fstop
			\end{equation}
		\end{enumerate}
		If additionally ~$y_{\inf J}\eqdef \lim_{t\downarrow \inf J} y_t$ exists, we say that~$\seq{y_t}_{t\in J}$ is a curve of maximal slope for~$f$ \emph{starting at~$y_{\inf J}$}.
	\end{definition}
	
	\begin{remark}
		From~\eqref{eq:EDI} and the absolute continuity of $f$ we conclude that~$t\mapsto f(y_t)$ is non-increasing along curves of maximal slope~$\seq{y_t}_{t\in [0,T)}$. 
	\end{remark}

		There are several slightly different notions of curve of maximal slope in the literature, and the distinction matters for our purposes.
		In particular, it is important here to include the absolute continuity of the function along gradient-flow trajectories in our definition, as this allows one to 
		deduce the following well-known fact, asserting the equality of the speed of the gradient flow and the slope of the driving functional.

	\begin{lemma}\label{l:EqualityEDI}
		Let 
		$f : X \to (-\infty, \infty]$
		be proper
		and let
		$\seq{y_t}_{t\in J}$ 
		be a  curve of maximal slope.
		For \textrm{a.e.} $t \in J$ we have 
		\begin{align}\label{eq:EqYoung}
			- \ddt f(y_t)  
			= \abs{\dot y_t}^2
			= \slop{f}{y_t}^2.
		\end{align}
		In particular, equality holds for a.e.~$t\in{J}$
		in \eqref{eq:EDI}.
	\end{lemma}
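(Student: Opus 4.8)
The plan is to sandwich the quantity $-\ddt f(y_t)$ between two expressions that force every intermediate inequality to become an equality. The only new ingredient beyond the defining \eqref{eq:EDI} is a \emph{chain-rule inequality} of the form $-\ddt f(y_t) \le \slop{f}{y_t}\,\abs{\dot y_t}$, valid for a.e.\ $t\in J$. Once this is available, I would combine it with \eqref{eq:EDI} and the elementary Young inequality $\tfrac12 a^2 + \tfrac12 b^2 \ge ab$ (with $a=\abs{\dot y_t}$, $b=\slop{f}{y_t}$) to obtain
\begin{align*}
	-\ddt f(y_t)
	\geq
	\tfrac12 \abs{\dot y_t}^2 + \tfrac12 \slop{f}{y_t}^2
	\geq
	\abs{\dot y_t}\,\slop{f}{y_t}
	\geq
	-\ddt f(y_t),
\end{align*}
where the first inequality is \eqref{eq:EDI} and the last is the chain-rule inequality. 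Since the leftmost and rightmost terms coincide, every inequality is an equality; equality in Young's inequality forces $\abs{\dot y_t} = \slop{f}{y_t}$, and then $-\ddt f(y_t) = \abs{\dot y_t}\,\slop{f}{y_t} = \abs{\dot y_t}^2 = \slop{f}{y_t}^2$, which is exactly \eqref{eq:EqYoung}.

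To prove the chain-rule inequality I would first fix a full-measure set of $t\in J$ at which both the metric speed $\abs{\dot y_t}$ exists (by local absolute continuity of the curve, Definition~\ref{d:CMS}\ref{i:d:CMS:1}) and the derivative $\ddt f(y_t)$ exists (by \ref{i:d:CMS:2}); note that \eqref{eq:EDI} forces $\slop{f}{y_t} < \infty$ at a.e.\ such $t$, since $-\ddt f(y_t) \ge \tfrac12 \slop{f}{y_t}^2$ and the left-hand side is finite. For such a $t$ and for $h>0$, the elementary bound $f(y_t) - f(y_{t+h}) \le \tbraket{f(y_{t+h}) - f(y_t)}_-$ together with the definition of the slope yields, for every $\eps>0$ and all sufficiently small $h$,
\begin{align*}
	f(y_t) - f(y_{t+h})
	\leq
	\tparen{\slop{f}{y_t} + \eps}\,\mssd(y_{t+h}, y_t),
\end{align*}
since $y_{t+h}\to y_t$ as $h\downarrow 0$ and $\slop{f}{y_t}$ is the $\limsup$ of the corresponding difference quotients. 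Dividing by $h$ and letting $h\downarrow 0$ turns the right-hand side into $\tparen{\slop{f}{y_t} + \eps}\,\abs{\dot y_t}$ and the left-hand side into $-\ddt f(y_t)$; letting $\eps \downarrow 0$ then gives the claim. This argument is uniform in whether $\abs{\dot y_t}$ vanishes or not, so no separate treatment of the stationary case $\abs{\dot y_t}=0$ is required.

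The main obstacle is the careful passage to the limit in the chain-rule inequality: the slope is defined as a $\limsup$ over \emph{all} $y\to y_t$, so one must argue that the particular approach $y_{t+h}\to y_t$ along the curve is controlled by this $\limsup$, which is precisely what the $\eps$-neighbourhood reformulation above achieves. Everything else is bookkeeping on a common set of full measure, using that $\seq{f(y_t)}_{t\in J}$ is absolutely continuous (hence differentiable a.e.\ and recoverable from its derivative) and that the metric speed represents the almost-everywhere derivative of the distance along the curve.
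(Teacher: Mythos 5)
Your proof is correct and follows essentially the same route as the paper: both combine \eqref{eq:EDI} with the chain-rule bound $-\ddt f(y_t) \le \slop{f}{y_t}\,\abs{\dot y_t}$ and conclude via the equality case of Young's inequality. Your $\eps$-formulation of the chain-rule step is just a more careful spelling-out of the paper's one-line estimate (which splits the limsup of a product into a product of limsups, implicitly sidestepping the case $y_s = y_t$), so the two arguments are the same in substance.
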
	
	
	\begin{proof}
		Let $t \in J$ be such that 
		the metric speed
		$\abs{\dot y_t}$
		and the derivative
		$\ddt f(y_t)$ 
		exist. 
		By local absolute continuity of $(y_t)_t$ and $(f(y_t))_t$, this property holds almost everywhere.
		Using the definitions we obtain
		\begin{align*}
			-\ddt f(y_t)  = 
			\limsup_{s \downarrow t} \frac{ f(y_t) - f(y_s) }{|t-s|}
			\leq
			\limsup_{s \downarrow t}
			\frac{f(y_t) - f(y_s)}{\mssd(y_t, y_s)} 
			\cdot 
			\limsup_{s \downarrow t}
			\frac{\mssd(y_t, y_s)}{|t-s|}
			\leq \slop{f}{y_t} \cdot \abs{\dot y_t}.
		\end{align*}
		Combining this inequality with \eqref{eq:EDI}, we find that
		\begin{align*}
			\tfrac{1}{2} \abs{\dot y_t}^2 
			+ \tfrac{1}{2}\slop{f}{y_t}^2
			\leq -\ddt f(y_t) 
			\leq
			\slop{f}{y_t} \cdot \abs{\dot y_t},
		\end{align*}
		which, again by Young's inequality, implies the desired identities.
	\end{proof}
	
	In light of Lemma~\ref{l:EqualityEDI}, every curve of maximal slope satisifies the \emph{Energy Dissipation Equality} for a.e.~ $t\in J$:
	\begin{equation}\tag{\mathsc{ede}}\label{eq:EDE}
		-\ddt f(y_t) 
		= 
		 \tfrac{1}{2} \abs{\dot y_t}^2 
		+\tfrac{1}{2}\slop{f}{y_t}^2.
	\end{equation}

	\begin{remark}\label{rmk: global AC2}
		Let $J =[a,b)$ with $-\infty < a < b \leq \infty$ and suppose that $y_a \in \dom{f}$.
		If $f$ is bounded from below by some constant $M\in \R$, then 
		$t \mapsto \abs{\dot y_t}$ belongs to 
		$L^2(a,b)$ 
		for every curve of maximal slope $(y_t)_{t \in J}$. 
		Indeed, for $t \in (a,b)$, integration of \eqref{eq:EDI} yields
		\begin{equation}\label{eq: energy identity}
			\frac{1}{2}\int_a^t \abs{\dot y_r}^2 \diff r +\frac{1}{2} \int_a^t \slo{f}{(y_r)}^2 \diff r \leq f(y_a) - f(y_t)
			\leq f(y_a) - M.
		\end{equation}
		The conclusion follows by passing to the limit
		$t \uparrow b$. 
	\end{remark}

	\begin{remark}[Comparison with~{\cite[Dfn.s~2.12, 2.13]{Hauer-Mazon:2019}}]
		Our Definition~\ref{d:CMS} is more restrictive than~\cite[Dfn.s~2.12, 2.13]{Hauer-Mazon:2019} as we additionally require the condition in~\ref{i:d:CMS:2}.
		This condition guarantees that~$\slo{f}$ is a strong upper gradient of~$f$ along~$\seq{y_t}_{t\in J}$; see e.g.~\cite[Rmk.~2.8]{AmbGigSav14}. 
		Furthermore, by~\ref{i:d:CMS:2} we may integrate~\eqref{eq:EDI} to conclude that~$t\mapsto f(y_t)$ is non-increasing, which is rather an assumption in~\cite[Dfn.~2.12]{Hauer-Mazon:2019}.
		Everywhere below, following~\cite{Hauer-Mazon:2019}, we could drop the assumption of~\ref{i:d:CMS:2} and replace~$\slo{f}$ by any given strong upper gradient~$g$.
		For the sake of simplicity however, we confine our exposition to the case~$g\eqdef\slo{f}$ for which the assumptions in~\cite{Hauer-Mazon:2019} are verified in light of~\ref{i:d:CMS:2} as discussed above.
	\end{remark}
	
	\section{Convergence of gradient flows}
	\label{sec:ConvGF}
	This section is devoted to the proof of Theorem \ref{thm:ConvRateIntro}, which deals with the converence of gradient flows under Assumption {$(A)$}.

	\begin{definition}[Equilibrium point]
		We say that~
		$x^\star\in X$ 
		is an \emph{equilibrium point} for~$f$ 
		if
		$x^\star\in\dom{\slo{f}}$ and~
		$\slo{f}(x^\star)=0$. 
	\end{definition}
	
	We refer to cf.~\cite[Dfn.~2.35]{Hauer-Mazon:2019} for a more general definition for strong upper gradients.

	Clearly, every local minimiser~$x^\star \in \dom{f}$ 
	is an equilibrium point for~$f$.

	It will be useful to first investigate gradient flow curves starting from an equilibrium point.
	
	\begin{lemma}[Trivial flows]\label{l:TrivialFlows}
		Let~$f\colon X\to (-\infty,\infty]$ be proper 
		and $T\in (0,\infty]$.
		\begin{enumerate}[(i)]
			\item If $x^\star\in \dom{f}$ 
			is an equilibrium point for~$f$, 
			then the constant curve~$\seq{y_t}_{t\in [0,T)}$ defined by~$y_t \equiv x^\star$ 
			is a curve of maximal slope for~$f$ starting at~$x^\star$. 
			\item If $x^\star\in \dom{f}$ is a local minimiser for~$f$, then 
			the constant curve~$\seq{y_t}_{t\in [0,T)}$ defined by~$y_t \equiv x^\star$ 
			is the only curve of maximal slope for~$f$ starting at~$x^\star$. 
		\end{enumerate}
	\end{lemma}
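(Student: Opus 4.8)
The plan is to verify $(i)$ directly from Definition~\ref{d:CMS}, and then to derive $(ii)$ from $(i)$ together with the monotonicity of $f$ along curves of maximal slope and the energy dissipation inequality~\eqref{eq:EDI}.

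For $(i)$, consider the constant curve $y_t \equiv x^\star$. Conditions~\ref{i:d:CMS:1} and~\ref{i:d:CMS:2} hold trivially: since $\mssd(y_s,y_t) = 0$ and $f(y_t) \equiv f(x^\star)$, one may take $m \equiv 0$ in~\eqref{eq:MSpeed}, so that $\seq{y_t}_{t\in[0,T)} \in \AC^1_\loc([0,T);X)$ with metric speed $\abs{\dot y_t} \equiv 0$, while $\seq{f(y_t)}_{t}$ is constant, hence in $\AC^1_\loc([0,T);\R)$. For~\ref{i:d:CMS:3}, along the constant curve $-\ddt f(y_t) = 0$ and $\abs{\dot y_t} = 0$; since $x^\star$ is an equilibrium point we also have $\slo{f}(y_t) = \slo{f}(x^\star) = 0$, so~\eqref{eq:EDI} reduces to $0 \geq 0$. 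Finally $\lim_{t\downarrow 0} y_t = x^\star$, so the curve starts at $x^\star$.

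For $(ii)$, recall that a local minimiser is in particular an equilibrium point, so by $(i)$ the constant curve is a curve of maximal slope starting at $x^\star$; it remains to prove uniqueness. Let $\seq{y_t}_{t\in[0,T)}$ be any curve of maximal slope starting at $x^\star$, and fix a neighbourhood $U$ of $x^\star$ on which $f \geq f(x^\star)$. By continuity of the curve and monotonicity of $t\mapsto f(y_t)$, whenever $y_t \in U$ we have simultaneously $f(y_t) \geq f(x^\star)$ and $f(y_t) \leq f(y_0) = f(x^\star)$, forcing $f(y_t) = f(x^\star)$. On any subinterval where $f(y_t)$ is thus constant, integrating~\eqref{eq:EDI} gives $\tfrac12\int \abs{\dot y_r}^2\diff r + \tfrac12\int \slo{f}(y_r)^2\diff r \leq 0$, whence $\abs{\dot y_r} = 0$ for a.e.~$r$ there, and consequently $\mssd(y_s,y_t) \leq \int_s^t \abs{\dot y_r}\diff r = 0$; that is, the curve is constantly $x^\star$ on that subinterval.

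To promote this local statement to all of $[0,T)$, set $t_0 \coloneqq \sup\set{t\in[0,T) : y_s = x^\star \text{ for all } s\in[0,t]}$. The neighbourhood argument above shows $t_0 > 0$. If $t_0 < T$, then $y_{t_0} = x^\star$ by continuity, and applying the same argument on a right-neighbourhood of $t_0$ (using the same $U$, which is still a neighbourhood of $y_{t_0} = x^\star$) yields $y_t = x^\star$ on $[t_0, t_0+\varepsilon]$ for some $\varepsilon > 0$, contradicting the definition of $t_0$. Hence $t_0 = T$ and $y_t \equiv x^\star$. The main obstacle is precisely this bootstrapping: local minimality controls $f$ only on the fixed neighbourhood $U$, so one cannot conclude globally in one stroke and must run an open–closed (continuity) argument on the connected interval $[0,T)$, taking care that continuity pins $y_{t_0} = x^\star$ so that the curve re-enters $U$ past the supremum time.
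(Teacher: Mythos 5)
Your proposal is correct and follows essentially the same route as the paper: part $(i)$ by direct verification of Definition~\ref{d:CMS}, and part $(ii)$ by combining local minimality with the monotonicity of $t\mapsto f(y_t)$ to force $f$ constant while the curve stays in $U$, then integrating \eqref{eq:EDI} to get zero metric speed, and finally a continuity argument on $[0,T)$. The only cosmetic difference is bookkeeping: the paper works with the exit time $t_0=\inf\set{t>0: y_t\notin U}\wedge T$ and derives $t_0=T$ by continuity, whereas you take the supremum of the constancy interval and bootstrap past it — the two are interchangeable versions of the same open–closed argument.
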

	
	\begin{proof}
		$(i)$: \ This follows immediately from the definitions.
		
		$(ii)$: \ 
		Let~$x^\star
		\in
		\dom{f}$ be a local minimiser and let~$U$ be a neighbourhood of~$x^\star$ such that
		 ~$f\geq f(x^\star)$ on~$U$. 	
		Furthermore, let $\seq{y_t}_{t\in [0,T)}$ be a curve of maximal slope for~$f$ starting at~$x^\star$, and set
		\[
		t_0\eqdef \inf\set{t>0: y_t\notin U}\wedge T \fstop
		\]
		Note that $t_0 > 0$, since $t \mapsto y_t$ is continuous. 
		Since $y_t \in U$ for $t \in [0, t_0)$, we have 
			$f(y_t) \geq f(x^\star)$ for $t \in [0, t_0)$. 
		As~$t\mapsto f(y_t)$ is non-increasing by ~\eqref{eq:EDI}, 
		we thus infer that~$f(y_t) = f(x^\star)$ for $t \in [0 ,t_0)$.
		Therefore,~$t\mapsto \ddt f(y_t)$ is identically~$0$, hence~$\abs{\dot y_t}=0$ for~$t \in [0,t_0)$ again by~\eqref{eq:EDI}.
		Applying \eqref{eq:MSpeed} to the metric speed, we infer that~
			$\mssd(y_t,y_0)
				\leq 
				\int_0^t \abs{\dot y_r} \diff r
				=
				0$, 
		hence~$y_t = y_0 \eqdef x^\star$ for all~$t\in [0,t_0)$.
		By continuity of~$t\mapsto y_t$ we conclude that
		$t_0=T$, which proves the assertion.
	\end{proof}

For convenience of the reader we recall the following definition from the introduction.

	\begin{definition}[Auxiliary  function]\label{d:Auxiliary}
		Given a parameter function $\theta : [0,\infty) \to [0,\infty)$ we consider the auxiliary function 
		\begin{align*}
			& \eta: [0,\infty)  \to [-\infty,\infty), 
			\qquad &
			\eta(u)
			& :=
			\int_1^u
			\big(\theta'(s)\big)^2
			\diff s
			\qquad 
			\text{for $u \in [0,\infty)$},\\
			&\Gamma : [0,\theta(\infty))  \to [-\infty,\infty), 
		\qquad	&
		\Gamma(u) & := (\eta \circ \theta^{-1})(u).
		\end{align*}
	\end{definition}
	Here we use the convention that $\theta(\infty) := \lim_{u \to \infty} \theta(u)$.
	Note that $\theta$ is indeed invertible and nonnegative, so that $\Gamma$ is well-defined.
	The following lemma collects some elementary properties of $\theta$. We leave the proof to the reader.
	
	\begin{lemma}[Properties of the auxiliary function]
		\label{lem:aux}
		The function $\eta$ is strictly increasing,   
		$\eta(1) = 0$, and
		$\eta(0)$ is possibly $-\infty$.
		Moreover, $\eta$ is continuously differentiable on $(0,\infty)$ and $\eta'(u) = (\theta'(u))^2$ for all $u > 0$.
	\end{lemma}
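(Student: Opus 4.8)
The plan is to derive every assertion directly from the fundamental theorem of calculus, exploiting the regularity and positivity of $\theta'$ imposed in Definition \ref{d:Parameter1}. First I would record that, since $\theta \in C^1((0,\infty))$, the integrand $s \mapsto (\theta'(s))^2$ is continuous on $(0,\infty)$; consequently, for every $u > 0$ the quantity $\int_1^u (\theta'(s))^2 \diff s$ is an ordinary Riemann integral over the compact interval with endpoints $1$ and $u$, which is contained in $(0,\infty)$, and is therefore finite. The fundamental theorem of calculus then yields immediately that $\eta$ is differentiable on $(0,\infty)$ with $\eta'(u) = (\theta'(u))^2$; as $(\theta')^2$ is continuous, this derivative is continuous, whence $\eta \in C^1((0,\infty))$.

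Once this is in place, the remaining claims follow with essentially no further work. The identity $\eta(1) = 0$ holds because the integral defining it has equal limits of integration. Strict monotonicity on $(0,\infty)$ is a consequence of the positivity $\eta'(u) = (\theta'(u))^2 > 0$, which is precisely where the standing assumption $\theta'(u) > 0$ for $u > 0$ enters. For the endpoint $0$, I would rewrite $\eta(0) = -\int_0^1 (\theta'(s))^2 \diff s$, understood as an improper integral at the left endpoint; since the integrand is nonnegative this is well-defined as an element of $[0,\infty]$, so $\eta(0) \in [-\infty, 0)$, and the value $-\infty$ occurs exactly when $\int_0^1 (\theta'(s))^2 \diff s$ diverges --- this being the content of the phrase ``$\eta(0)$ possibly $-\infty$''. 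To upgrade strict monotonicity to all of $[0,\infty)$, I would observe that $\eta(0) = \lim_{u \downarrow 0} \eta(u) = \inf_{u>0} \eta(u)$ in $[-\infty,\infty)$ by monotone convergence, so that $\eta(0) < \eta(u)$ for every $u > 0$.

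The only point demanding any care, and it is a minor one, is the bookkeeping around the extended-real value of $\eta$ at the origin, namely distinguishing the genuine integral on $(0,\infty)$ from the possibly improper one anchored at $0$. Beyond that the proof is a routine invocation of the fundamental theorem of calculus together with the sign of $\theta'$, and I anticipate no substantial obstacle; this is consistent with the authors leaving it to the reader.
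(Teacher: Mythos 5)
Your proof is correct, and since the paper explicitly leaves this lemma to the reader, your routine argument via the fundamental theorem of calculus, the positivity of $(\theta')^2$, and monotone convergence at the left endpoint is precisely the intended one. The only compressed step --- deducing $\eta(0)<\eta(u)$ for all $u>0$ from $\eta(0)=\inf_{u>0}\eta(u)$ --- is fine because strict monotonicity on $(0,\infty)$ guarantees the infimum is not attained.
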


	\begin{remark}
		In the special case 
		where $\theta(u) = \frac{c}{\gamma} u^\gamma$ 
		we have the explicit formulas
		\begin{align*}
			\eta(u) = 
			\frac{c^2}{2\gamma -1} (u^{2\gamma -1} - 1)
			\quad\text{if $\gamma > 0,  \ \gamma \neq \frac12$},
			\qquad\text{and}\qquad
			\eta(u) = c^2 \log u 
			\quad\text{if $\gamma = \frac12$}
			\fstop
		\end{align*}
	\end{remark}

	The following lemma contains the crucial quantitative bounds on the distance and the driving functional that can be derived from Condition~{$(A)$}, for suitable gradient-flow trajectories that stay within the ball $B_r(x_0)$.

	\begin{lemma}[Distance bound and energy bound]\label{l:ThetaEstimate}
		Let~$f\colon X\to [0,\infty]$ be lower semicontinuous, and suppose that 
		$x_0\in\dom{f}$ and~$r>0$ 
		satisfy Condition~{$(A)$} 
		for some parameter function $\theta$.
		Let $\seq{y_t}_{t\in [0,T)}$, with~$T\in (0,\infty]$, be a curve of maximal slope starting at $x_0$. 
		Let $0 \leq s \leq t < T$ and
		assume that 
		$y_u \in B_r(x_0)$ 
		and
		$f(y_u) > 0$ for all $u \in [s,t]$.
		Then:
		\begin{align}
			\label{eq:l:ThetaEstimate:0}
			(\theta\circ f)(y_s)- (\theta\circ f)(y_t)
			& \geq 
			\mssd(y_t,y_s),\\
			\label{eq:l:EnergyEstimate:0}
			(\eta\circ f)(y_s) 
			- (\eta \circ f)(y_t)
			& \geq 
			t-s. 
		\end{align}		 
	\end{lemma}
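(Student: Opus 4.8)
The plan is to apply the fundamental theorem of calculus to the two absolutely continuous functions $u \mapsto (\theta\circ f)(y_u)$ and $u\mapsto (\eta\circ f)(y_u)$ on $[s,t]$, differentiating by the chain rule, rewriting the derivatives via the Energy Dissipation Equality of Lemma~\ref{l:EqualityEDI}, and then bounding the integrands pointwise using Condition~$(A)$.

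First I would verify that Condition~$(A)$ is applicable at $y_u$ for every $u\in[s,t]$. Since $y_0=x_0$ and $u\mapsto f(y_u)$ is non-increasing along a curve of maximal slope (see the remark following Definition~\ref{d:CMS}), we have $f(y_u)\leq f(x_0)$. Together with the standing hypotheses $y_u\in B_r(x_0)$ and $f(y_u)>0$, this shows $y_u\in B_r(x_0)\cap\set{0<f\leq f(x_0)}$, so that the slope inequality $(\theta'\circ f)(y_u)\cdot\slop{f}{y_u}\geq 1$ of Condition~$(A)$ holds there. Next I would record the absolute continuity needed for the chain rule: as $u\mapsto f(y_u)$ is continuous and strictly positive on the compact interval $[s,t]$, it takes values in some $[\delta,f(x_0)]\subset(0,\infty)$ with $\delta>0$. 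On this subinterval both $\theta$ and $\eta$ are $C^1$, hence Lipschitz, and composing them with the locally absolutely continuous map $u\mapsto f(y_u)$ yields functions $u\mapsto(\theta\circ f)(y_u)$ and $u\mapsto(\eta\circ f)(y_u)$ that are absolutely continuous on $[s,t]$, with the chain rule valid for a.e.~$u$.

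I would then compute the a.e.~derivatives. By Lemma~\ref{l:EqualityEDI} we have $\ddu f(y_u)=-\slop{f}{y_u}^2$ and $\abs{\dot y_u}=\slop{f}{y_u}$, while $\eta'=(\theta')^2$ by Lemma~\ref{lem:aux}. Hence for a.e.~$u\in[s,t]$,
\[
-\ddu (\theta\circ f)(y_u)=(\theta'\circ f)(y_u)\,\slop{f}{y_u}^2=\big[(\theta'\circ f)(y_u)\,\slop{f}{y_u}\big]\,\abs{\dot y_u}\geq\abs{\dot y_u},
\]
and
\[
-\ddu (\eta\circ f)(y_u)=(\eta'\circ f)(y_u)\,\slop{f}{y_u}^2=\big[(\theta'\circ f)(y_u)\,\slop{f}{y_u}\big]^2\geq 1,
\]
both inequalities being exactly the pointwise estimate from Condition~$(A)$. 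Integrating the first over $[s,t]$ gives $(\theta\circ f)(y_s)-(\theta\circ f)(y_t)\geq\int_s^t\abs{\dot y_u}\diff u\geq\mssd(y_t,y_s)$, where the last step is~\eqref{eq:MSpeed}; this is~\eqref{eq:l:ThetaEstimate:0}. Integrating the second gives $(\eta\circ f)(y_s)-(\eta\circ f)(y_t)\geq t-s$, which is~\eqref{eq:l:EnergyEstimate:0}.

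I expect the only genuinely delicate point to be the justification of absolute continuity of the compositions and the a.e.~chain rule; this is precisely where the strict positivity of $f$ along $[s,t]$ is used, since it keeps the argument $f(y_u)$ bounded away from $0$, where $\theta'$ and $\eta$ (with $\eta(0)$ possibly $-\infty$) may degenerate. Once this is in place, the remaining argument is a one-line pointwise application of Condition~$(A)$ followed by integration.
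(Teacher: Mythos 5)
Your proposal is correct and follows essentially the same route as the paper's proof: express the a.e.\ derivatives of $u\mapsto(\theta\circ f)(y_u)$ and $u\mapsto(\eta\circ f)(y_u)$ via the chain rule and the identities of Lemma~\ref{l:EqualityEDI}, bound them pointwise by Condition~$(A)$, and integrate, using \eqref{eq:MSpeed} for the distance estimate. Your added care in checking $f(y_u)\leq f(x_0)$ (so that Condition~$(A)$ applies) and in justifying absolute continuity of the compositions via the Lipschitz bound on $[\delta,f(x_0)]$ only makes explicit what the paper leaves implicit.
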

	
	\begin{proof}
		As~$\theta$ and $\eta$ are continuously differentiable on~$(0,\infty)$, 
		and ~$t\mapsto f(y_t)$ is 
		locally absolute continuous,
		we conclude that also ~$\mcH\colon t\mapsto (\theta\circ f)(y_t)$
		and 
			$t\mapsto (\eta \circ f)(y_t)$
		are locally absolutely continuous on~$(0,T)$.
		For almost every $u \in [s,t]$, we obtain
		by absolute continuity of~$\mcH$, by~\eqref{eq:EqYoung}, and by~{$(A)$},
		\begin{equation}\label{eq:l:ThetaEstimate:2}
			-\mcH'(u)
			= -(\theta'\circ f)(y_u) \cdot \ddu f(y_u) 
			= (\theta'\circ f)(y_u) \cdot \slo{f}(y_u) \abs{\dot y_u} 
			\geq \abs{\dot y_u}.
		\end{equation}
		Since~$t \mapsto y_t$ is locally absolute continuous, we obtain
		\begin{align}\label{eq:l:ThetaEstimate:3}
			\mssd(y_t,y_s) 
			\leq 
			\int_s^t \abs{\dot y_u} \diff u
			\leq \int_s^t - \mcH'(u) \diff u 
			= \mcH(s)-\mcH(t),
		\end{align}
		which proves \eqref{eq:l:ThetaEstimate:0}.
		
		Moreover, 
		using again that $f(y_u) > 0$ for all $u \in [s,t]$, we obtain for a.e. $u \in (s,t)$,
		by Lemma \ref{lem:aux}, 
		by Lemma \ref{l:EqualityEDI},
		and by~{$(A)$},
		\begin{align*}
			- \ddu (\eta\circ f)(y_u)
			=
			- (\eta'\circ f)(y_u) 
			\cdot
			\ddu f(y_u)
			= 
			\Big(
			(\theta'\circ f)(y_u) \cdot \slo{f}(y_u)	
			\Big)^2
			\geq 1.			
		\end{align*}
		Integration of this inequality yields \eqref{eq:l:EnergyEstimate:0}.
	\end{proof}
	
	\begin{ese}\label{ese:explicit-f-decay}
		An explicit computation shows that in the special case where $\theta(u) = \frac{c}{\gamma} u^\gamma$, the energy estimate~\eqref{eq:l:EnergyEstimate:0} becomes
		\begin{align}\label{eq:ex:en-est-1}
			f(y_t) &\leq 
			\begin{cases}
				\displaystyle
				\bigg(
			f(y_s)^{2\gamma -1}
			- \frac{2\gamma -1}{c^2}(t-s)
			\bigg)^{1/(2\gamma -1)}
			& 
			\text{ if }
			\gamma > 0\comma  \gamma \neq \frac12
			\comma
			\\
			e^{-(t-s)/c^2}f(y_s) & 
			\text{ if } 
			\gamma = \frac12 \fstop	
			\end{cases} 
		\end{align}
	\end{ese}

	We are now ready to prove our first main result.
	\begin{proof}[Proof of Theorem \ref{thm:ConvRateIntro}]
		We assume that $f(x_0)>0$, as the result would otherwise follow immediately from Lemma~\ref{l:TrivialFlows}.
		
		\ref{i:thm-conv-1} \
		We define
		\begin{align*}
			t_0
			\eqdef  
			\inf \big\{t\in [0,T): 
			y_t\in \partial B_r(x_0)\big\} \wedge T
		\end{align*}
		and note that $t_0 > 0$, since $(y_t)_{t \in[0,T)}$ is continuous.
		If $t_0 = T$ the conclusion follows,
		hence it suffices to treat the case where $t_0 < T$.

		If $f(y_{t_0}) = 0$, the conclusion follows from Lemma \ref{l:TrivialFlows} and the definition of $t_0$. 
		It thus remains to treat the case where  
		$t_0 < T$ and $f(y_{t_0}) > 0$. 
		We will show that these conditions yield a contradiction, which completes the proof.
		
		Indeed, 
		\eqref{eq:l:ThetaEstimate:0} 
		and
		Assumption {$(A)$}
		yield,
		for $0 < t < t_0$,
		\begin{align*}
			\mssd(y_t,x_0)
			& \leq
			(\theta\circ f)(x_0) - (\theta\circ f)(y_t)
			\leq  r - (\theta\circ f)(y_{t_0}) \fstop
		\end{align*}
		Since $(\theta\circ f)(y_{t_0}) > 0$ 
		and  $t \mapsto y_t$ is continuous, 
		it follows by passing to the limit
		$t \uparrow t_0$ that 
		$\mssd(y_{t_0},x_0) < r$.
		This is the desired contradiction, since 
		$\mssd(y_{t_0},x_0) = r$ by construction.

		\smallskip
		\ref{i:thm-conv-2} \ 
		Since $t \mapsto f(y_t)$ is continuous, it follows that 
		\[
		t_*\eqdef \inf\set{t\in [0,T): f(y_t)=0} \wedge T >0\fstop
		\]		
		We first claim that $y_T\eqdef \lim_{t\to T} y_t$ exists and belongs to $\overline{B_r(x_0)}$. 

		If~$t_*<T$, then~$y_t=y_{t_*}$ for every~$t\in [t_*, T)$ by Lemma~\ref{l:TrivialFlows}, 
		and the claim follows.

		If otherwise~$t_*=T$, then~\eqref{eq:l:ThetaEstimate:0} holds for all 
		$0\leq s\leq t < T$. Write $\mcH(t) \eqdef (\theta \circ f) (y_t)$.
		Then $\mcH\colon [0,T)\to [0,\infty)$ is continuous, non-increasing and bounded from below, so it admits a continuous non-increasing extension on $[0,T]$.
		Thus, the bound (for $0\leq s<t<T$) 
		\[
		\mssd(y_s,y_t) \leq \mcH(s)-\mcH(t) \leq \mcH(s) - \mcH(T)
		\]
		combined with $\mcH(s) \downarrow \mcH(T) \geq 0$ as $s\to T$ implies the Cauchy property of $\seq{y_t}_t$, hence the existence of the limit, which proves the claim. 
		
		By lower semicontinuity of $f$ and Lemma \ref{l:TrivialFlows} and in view of \ref{i:thm-conv-1}, we infer that \eqref{eq:l:ThetaEstimate:0} holds for all $0\leq s\leq t \leq T$ (even if $t_*<T$). 
		Choosing 
		$s= 0$ and $t= T$, 
		the last part of the statement follows using
		\ref{eq:ass:cts}.

		\smallskip
		
		\ref{i:thm-conv-3} \
		Let $0\leq t<t_*$.
		In view of \ref{i:thm-conv-1}, \eqref{eq:f-rate} follows from \eqref{eq:l:EnergyEstimate:0}.
		Next, by \eqref{eq:theta-estimate-thm} we have 
		\[
		\mssd(y_t,y_T) \leq \theta(f(y_t)) - \theta(f(y_T)) \leq \theta(f(y_t)).
		\]
		Using this bound and \eqref{eq:f-rate}, we obtain
		\begin{align*}
			(\eta \circ \theta^{-1}) (\mssd(y_t, y_T))
			\leq 
			(\eta \circ f)(y_t)
			\leq
			(\eta\circ f)(x_0) 
			- t
			\leq
			(\eta\circ \theta^{-1})(r) 
			- t,
		\end{align*}
		which shows \eqref{eq:d-rate}.
		By continuity of $(y_t)_t$ and lower semicontinuity of $f$, \eqref{eq:d-rate} and \eqref{eq:f-rate} extend to $t=t_*$.
		
		Finally, suppose that $T=\infty$. If $t_*<\infty$, then clearly $f(y_\infty) = 0$. If on the other hand $t_* = \infty$, 
		it follows from \eqref{eq:f-rate} 
		that
		$(\eta \circ f)(y_t) \to -\infty$
		as $t \to \infty$, hence 
		$f(y_t) \to 0$.
		By lower semicontinuity of $f$ the result follows.
	\end{proof}
	
	In the special case where the parameter function $\theta $ takes the form $\theta(u) = \frac{c}{\gamma}u^\gamma$, we obtain the following more explicit result.
	The notation $t^*$ was introduced in Theorem \ref{thm:ConvRateIntro}.

	\begin{corollary}
		\label{cor:conv-rate-gf-special}
		Let~$f\colon X\to [0,\infty]$ be lower semicontinuous and suppose that
		$x_0\in \dom{f}$ and~$r>0$
		satisfy Condition~{$(A)$}
		with parameter function~$\theta(u)=\tfrac{c}{\gamma}u^\gamma$ for some~$c>0$ and~$\gamma \in (0,1]$.
		Let~$\seq{y_t}_{t\in [0,T)}$ be a curve of maximal slope for~$f$ starting at~$x_0$,
		for some $T \in (0,\infty]$. 
		Then 
		$y_T\coloneqq \lim_{t\to T} y_t$ exists, 
		$y_t$ belongs to $\overline{B_r(x_0)}$ 
		for all  $t \in [0,T]$,
		and
		for all $0\leq t\leq t^*$
		we have
		\begin{align}
		\label{eq:c:rate-gf-special:00}
			\mssd(y_t,y_T) &\leq 
			\frac{c}{\gamma}
				\bigg(
					\Big(\frac{\gamma r}{c}\Big)
				^{\frac{2\gamma-1}{\gamma}}
				- 
				\frac{2\gamma-1}{c^2}t\bigg)^{\frac{\gamma}{2\gamma -1}},
			&
			f(y_t) 
			& \leq \bigg(
				f(x_0)^{2\gamma -1}
				- \frac{2\gamma -1}{c^2}t
				\bigg)^{\frac{1}{2\gamma -1}},
			& 
			\text{if } \gamma &\neq \tfrac12 \comma
			\\
			\label{eq:c:rate-gf-special:0}
			\mssd(y_t,y_T) &\leq r e^{-\frac{t}{2c^2}},
			&
			f(y_t) &\leq f(x_0)\, e^{-\frac{t}{c^2}},
			& 
			\text{if } \gamma & = \tfrac12\fstop
		\end{align}
		Moreover, 
		if $\frac12 < \gamma \leq 1$, we have
		$t_* \leq \frac{c^2}{2\gamma - 1} f(x_0)^{2\gamma -1}$.
		\end{corollary}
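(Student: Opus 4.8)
The plan is to deduce the entire statement from Theorem \ref{thm:ConvRateIntro} by specialising the auxiliary functions $\eta$ and $\Gamma$ to the parameter function $\theta(u)=\tfrac{c}{\gamma}u^\gamma$. Since Condition $(A)$ is assumed for this $\theta$, Theorem \ref{thm:ConvRateIntro} already furnishes the existence of $y_T=\lim_{t\to T}y_t$, the confinement $y_t\in\overline{B_r(x_0)}$ for all $t\in[0,T]$, and the abstract convergence rates \eqref{eq:d-rate} and \eqref{eq:f-rate}. It therefore remains only to make these two estimates explicit and to establish the bound on $t_*$.

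First I would compute the auxiliary functions. From $\theta'(u)=cu^{\gamma-1}$ one obtains $\eta(u)=\int_1^u c^2 s^{2\gamma-2}\diff s$, which equals $\tfrac{c^2}{2\gamma-1}(u^{2\gamma-1}-1)$ when $\gamma\neq\tfrac12$ and $c^2\log u$ when $\gamma=\tfrac12$, as recorded in the Remark following Lemma \ref{lem:aux}. Inverting $\theta$ gives $\theta^{-1}(v)=\big(\tfrac{\gamma v}{c}\big)^{1/\gamma}$, and composing yields $\Gamma=\eta\circ\theta^{-1}$. By Lemma \ref{lem:aux} the function $\eta$ is strictly increasing, and since $\theta^{-1}$ is strictly increasing so is $\Gamma$; hence both are invertible with strictly increasing inverses
\[
\eta^{-1}(w)=\Big(1+\tfrac{2\gamma-1}{c^2}\,w\Big)^{1/(2\gamma-1)},
\qquad
\Gamma^{-1}=\theta\circ\eta^{-1}
\]
for $\gamma\neq\tfrac12$, and $\eta^{-1}(w)=e^{w/c^2}$, $\Gamma^{-1}(w)=2c\,e^{w/(2c^2)}$ for $\gamma=\tfrac12$.

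Next I would apply these inverses to the abstract bounds. Applying the increasing map $\eta^{-1}$ to \eqref{eq:f-rate} and using $(\eta\circ f)(x_0)=\tfrac{c^2}{2\gamma-1}(f(x_0)^{2\gamma-1}-1)$, the additive constant $1$ cancels and one is left with $f(y_t)\leq\big(f(x_0)^{2\gamma-1}-\tfrac{2\gamma-1}{c^2}t\big)^{1/(2\gamma-1)}$ for $\gamma\neq\tfrac12$, and $f(y_t)\leq f(x_0)\,e^{-t/c^2}$ for $\gamma=\tfrac12$. Similarly, applying the increasing map $\Gamma^{-1}=\theta\circ\eta^{-1}$ to \eqref{eq:d-rate} and using $\Gamma(r)=\eta(\theta^{-1}(r))$ produces, after the analogous cancellation, exactly the stated distance bounds \eqref{eq:c:rate-gf-special:00} and \eqref{eq:c:rate-gf-special:0}. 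The only bookkeeping to watch is the sign of $2\gamma-1$, i.e.\ whether $\gamma<\tfrac12$ or $\gamma>\tfrac12$; but since $\eta$ and $\Gamma$ are globally strictly increasing, inverting the inequalities is legitimate in every case and no case distinction beyond $\gamma=\tfrac12$ is required.

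Finally, for the bound on $t_*$ when $\tfrac12<\gamma\leq1$ (so $2\gamma-1>0$), set $t_0\coloneqq\tfrac{c^2}{2\gamma-1}f(x_0)^{2\gamma-1}$. At $t=t_0$ the right-hand side of the explicit bound on $f(y_t)$ vanishes, so if one had $t_0<t_*$, then evaluating that bound at $t_0\in[0,t_*)$ would give $f(y_{t_0})\leq0$; this contradicts $f(y_{t_0})>0$, which holds for $t_0<t_*$ by the definition of $t_*$ and the monotonicity of $t\mapsto f(y_t)$. Hence $t_*\leq t_0$, the remaining case $t_0\geq T$ being immediate from $t_*\leq T$. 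The main obstacle here is not conceptual but computational: carrying out the inversions of $\eta$ and $\Gamma$ and checking that the additive constants cancel so that the final expressions collapse to the clean form stated; the sign of $2\gamma-1$ is the only genuine pitfall, and it is resolved once and for all by the strict monotonicity of $\eta$ and $\Gamma$ coming from Lemma \ref{lem:aux}.
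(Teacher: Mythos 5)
Your proposal is correct and follows the paper's own route exactly: the paper likewise derives \eqref{eq:c:rate-gf-special:00} and \eqref{eq:c:rate-gf-special:0} by specialising and rearranging the abstract rates \eqref{eq:d-rate} and \eqref{eq:f-rate} from Theorem \ref{thm:ConvRateIntro}, and obtains the bound on $t_*$ from the second estimate in \eqref{eq:c:rate-gf-special:00}. Your explicit inversion of $\eta$ and $\Gamma$ (and the observation that their global monotonicity handles the sign of $2\gamma-1$ uniformly) simply fills in the computation the paper compresses into ``rearranging terms.''
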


	\begin{proof}
		The estimates for $\mssd(y_t,y_T)$ and $f(y_t)$
		are obtained from
		\eqref{eq:d-rate} and \eqref{eq:f-rate}
		by rearranging terms.
		The final assertion follows from the second bound in~\eqref{eq:c:rate-gf-special:00}.
	\end{proof}

		\begin{remark}[The case $\theta(u)=\tfrac{c}{2}\sqrt u$]
		\label{}
		Under the above assumptions,
		the distance estimate in Corollary \ref{cor:conv-rate-gf-special} 
		can be improved
 		if $\gamma =\frac{1}{2}$, 
		using the following the ideas of \cite{Cha22}.
		Let $0 \leq s \leq t \leq T$ and
		assume that 
		$y_u \in B_r(x_0)$ 
		and
		$f(y_u) > 0$ for all $u \in [s,t]$.
		Then:
		\begin{align}
		\label{eq:t:Main:3-old}
		\mssd(y_t,y_s)^2 
		& \leq 
	4c^2
		\Big(
		e^{-\frac{s}{2c^2}}-e^{-\frac{t}{2c^2}}
		\Big)
		\sqrt{f(x_0)}
		\Big(\sqrt{f(y_s)} - \sqrt{f(y_t)}\Big)
		\\& \leq
		4c^2
		e^{-\frac{s}{2c^2}} 
		\Big(e^{-\frac{s}{2c^2}}-e^{-\frac{t}{2c^2}}\Big) 
		f(x_0) \fstop
		\label{eq:t:Main:3.5}
		\end{align}		 
	\end{remark}
	
	\begin{proof}
		We can assume $t<T$ and then extend the result to $t=T$ by taking limits. Using the local 2-absolute continuity of
		$\seq{y_t}_{t\in [0,T)}$, 
		the Cauchy--Schwarz inequality,
		and the assumption that $f(y_u) > 0$ for all $u \in [s,t]$, we find
		\begin{align}
		\label{eq:d-bound}
		\mssd(y_t,y_s) 
		\leq 
		\int_s^t \abs{\dot y_u} \diff u
		\leq 
		\paren{\int_s^t \sqrt{f(y_u)} \diff u}^{1/2} 
		\paren{\int_s^t 
			\frac{\abs{\dot y_u}^2}{\sqrt{f(y_u)}}\diff u}^{1/2} \fstop
		\end{align}
		By local absolute continuity of~$u\mapsto f(y_u)$ we see that~$u \mapsto \sqrt{f(y_u)}$ too is locally absolutely continuous, and 
		$\ddu \sqrt{f(y_u)}(t) 
		= \big(2\sqrt{f(y_u)}\big)^{-1}\ddu f(y_u)$ holds 
		a.e.\ on~$(0,T)$.
		Since $\ddu f(y_u) = - \abs{\dot y_u}^2$		
		by ~\eqref{eq:EqYoung}, we obtain
		\begin{align}
		\label{eq:speed}
		\int_s^t \frac{\abs{\dot y_u}^2}{\sqrt{f(y_u)}} \diff u 
		=  -\int_s^t \frac{\ddu f(y_u)}{\sqrt{f(y_u)}} \diff u
		= -2 \int_s^t \ddu \sqrt{f(y_u)} \diff u
		= 2\Big(\sqrt{f(y_s)}-\sqrt{f(y_t)}\Big) \fstop
		\end{align}
		Since $\seq{y_u}_{u \in[s,t]} \subseteq B_r(x_0)$, we can take the square root of the second bound in \eqref{eq:c:rate-gf-special:0} 
		to see that
		\begin{align}
		\label{eq:sqrt}
		\int_s^t  \sqrt{f(y_u)}\diff u
		\leq 
		\sqrt{f(x_0)} 
		\int_s^t  e^{-\frac{u}{2c^2}}
		\diff u 
		= 
		{2}c^2
		\tond*{e^{-\frac{s}{2c^2} }-e^{-\frac{t}{2c^2}}}
		\sqrt{f(x_0)}\fstop
		\end{align}
		Inserting \eqref{eq:speed} and \eqref{eq:sqrt} into \eqref{eq:d-bound}, we arrive at \eqref{eq:t:Main:3-old}.
		
		Finally, another application of the second bound in \eqref{eq:c:rate-gf-special:0} yields
		\begin{align*}
		\sqrt{f(y_s)} - \sqrt{f(y_t)}
		\leq \sqrt{f(y_s)}
		\leq \sqrt{f(x_0)}\, e^{-\frac{s}{2c^2}}.
		\end{align*}
		Inserting this inequality into the right-hand side of \eqref{eq:t:Main:3-old} we obtain \eqref{eq:t:Main:3.5}.
	\end{proof}

	\section{Comments on the assumption}
	\label{sec:assumption}
	
	In this section we collect some comments on the main assumption of this paper,
	Conditions~{$(A)$} and~{$(A')$} 
	introduced in Definition \ref{def:main-ass}.

	\begin{remark}[Comparison with~\cite{Cha22}]\label{rmk:comp:cha}
		Let~$f\colon X\to [0,\infty]$ be proper.
		For~$r>0$ and~$x_0\in \dom{f}$ with $f(x_0)>0$ we define
		\begin{align}\label{eq:def-alpha-comp}
			\alpha = \alpha(x_0, r) 
			\eqdef 
			\inf_{\substack{x\in B_r(x_0)\\0<f(x)\leq f(x_0)} }
			\frac{\slop{f}{x}^2}{f(x)}\fstop
		\end{align}
		If $0< \alpha < \infty$,
		it follows immediately from the definitions that 
		the following statements are equivalent:
		\begin{enumerate}[$(i)$]
			\item\label{i:r:ChatterjeeComparison:1}  Condition~{$(A)$} holds for the parameter function~$\theta(u)\eqdef 2\sqrt{u/\alpha(x_0,r)}$;
			\item\label{i:r:ChatterjeeComparison:2} 
			The following inequality holds:
			\begin{align}\label{eq:r:ChatterjeeComparison:0}\tag{$C$}
				\alpha(x_0,r)
				\geq 
				\frac{4 f(x_0) }{r^2} \fstop
			\end{align}
		\end{enumerate}
		
		Similarly, the slightly stronger Condition {$(A')$} from 
		Definition \ref{def:main-ass} is equivalent to Condition {$(C')$},
		the strict inequality 
		$\alpha(x_0,r)
		>
		\frac{4 f(x_0) }{r^2}$.
		The latter condition is essentially identical to the main standing assumption in \cite{Cha22},
		in the setting of $C^2$ functions on $\R^n$.
		The difference is that we restrict 
		the infimum in~\eqref{eq:def-alpha-comp}
		to a sub-level set of $f$ and work with an open ball instead of a closed ball of radius $r$ around $x_0$.
		
	\end{remark}
	
	The following example illustrates that it is occasionally useful to consider the weaker Condition~$(C)$ instead of Condition~$(C')$.

	\begin{ese}
		Fix $x_0>0$ and consider the function $f\colon\R\to\R$ defined by (see Fig.~\ref{fig:StrictIneq})
		\begin{equation}\label{eq:f-StrictIneq}
			f(x) = 
			\begin{cases}
				x^2 & \text{ if } x\geq 0 \\
				\frac{x_0^2}{2} & \text{ if } x<0
			\end{cases} \fstop
		\end{equation}
		\begin{figure}[htb!]
			\includegraphics[scale=.75,trim=100 0 0 150, clip]{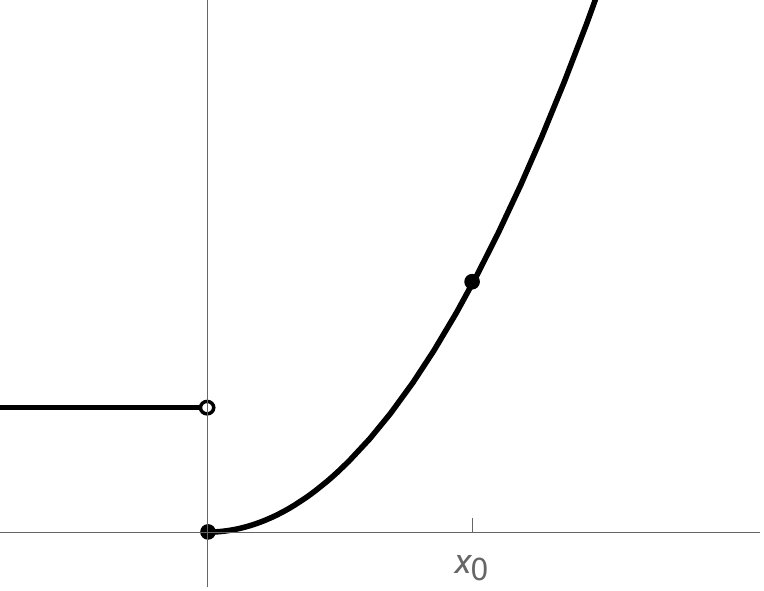}
			\caption{The function in~\eqref{eq:f-StrictIneq}.}
			\label{fig:StrictIneq}
		\end{figure}
		Then $\alpha(x_0, r) = 4$ for $0 < r \leq x_0$ and $\alpha(x_0, r) = 0$ for $r > x_0$.
		Therefore, Condition~$(C')$ fails to hold regardless of the choice of $r > 0$,
		but Condition~$(C)$ is satisfied for $r = x_0$.
	\end{ese}

\begin{remark}[Attainment of the minimum]\label{rmk:IoffeBasicLemAppl}
	Assumption~$(A')$
	implies
	the existence of a global minimiser~$x^\star$
	of 
	$f$
	satisfying 
	$\mssd(x^\star ,x_0) \leq 
	(\theta\circ f)(x_0)$
	and
	$f(x^\star) = 0$.
	This
	follows from 
	a result by Ioffe \cite{Iof77}, 
	which we recall in 
	Lemma~\ref{lem:Ioffe-lemma} below.
	To 
	derive the conclusion,
	Ioffe's result
	should be to be applied to 
	the function~$\theta \circ {f}$,
	and  
	a metric version of the chain rule is required to relate the slope of $f$ to the slope of~$\theta\circ {f}$.
	For completeness, we give a proof of this chain rule in Lemma \ref{l: chain rule for slope}.
	
	In light of this observation,
	it is possible to derive  
	results similar to Theorem \ref{thm:ConvRateIntro}
	by applying existing results 
	for convergence to a global minimum 
	under the K{\L} condition that assume the existence of a global minimum close to the starting point $x_0$;
	see, e.g., \cite[Thm.~10]{AttBolRedSou2010} and \cite[Thm.~2.12]{Att-Bol-Sva-2013} for such results for discrete schemes.
	However, a combination of these results with Ioffe's result yields a non-optimal criterion, as the K{\L} inequality is required to hold on a bigger set than necessary. 
	Moreover, some additional assumptions are made in the aforementioned results.
\end{remark}

	\begin{remark}[Sharpness of Condition {$(A)$}]
		To guarantee the existence and the proximity of a  global minimiser of $f$ under Condition {$(A)$}, the constant $r$ in the inequality $(\theta\circ f)(x_0) \leq r$ cannot be replaced by any larger constant. 

		To see this, fix $M < \infty$ (large) and consider for (small) $\eps \geq 0$ the function 
			$f_\eps : [0,\infty) \to [0,\infty)$ defined by
			$f_\eps(x) = \theta^{-1}(x + \eps)$ for $0 \leq x <M$
			and $f_\eps(x) = 0$ for $x \geq M$.
		Fix $x_0 \in (0, M/2)$.
		Differentiating the identity
			$\theta(f_\eps(x)) = x + \eps$, 
			we find that 
			$(\theta' \circ f_\eps)(x) f'_\eps(x)  
			=  1$
			for $0 < x  < M$.
			In particular, 
			the second inequality in Condition $(A)$ is  satisfied in an open ball of radius $x_0$ around $x_0$.

		If $\eps = 0$, 
		the identity $\theta(f_0(x_0)) = x_0$ implies that 
		Condition $(A)$ holds with $r = x_0$, 
		and indeed, the distance of $x_0$ to the nearest global minimiser of $f$ (which is $0$) equals 
		$x_0$.
		
		If $\eps > 0$, Condition $(A)$ fails to hold just barely (since $\theta(f_\eps(x_0)) = x_0 + \eps$), but the distance of $x_0$ to the nearest global minimiser (which is $M$) is enormous (namely, $M - x_0$)
		and the gradient flow curve starting from $x_0$ will converge to $0$, which is not a global minimiser.
	\end{remark}
		
		The following non-smooth example in $\R$ shows that Condition~{$(A)$} 
		can be applied in a setting where there is no uniqueness of gradient flow curves with a given starting point.

	\begin{ese}[Non-uniqueness]
		\label{ex:non-uniqueness}
		Let $\lambda > 0$ and $a > 0$, and consider the function $f \colon \R \to \R$ (see Fig.~\ref{fig:NonUniqueness:1})
		\begin{align}
			\label{eq:f-non-unique}
			f(x) = \min 
			\bigg\{ 
			\frac{\lambda}{2} (x-a)^2, \
			\frac{\lambda}{2} (x+a)^2
			\bigg\} \fstop
		\end{align}
		This function is everywhere smooth except at the origin.
		For each $x_0 \neq 0$, there exists a unique gradient-flow trajectory starting at $x_0$, given by 
		$y_t \eqdef 
		e^{-\lambda t} x_0 
		\pm \big( 1 - e^{-\lambda t} \big)a$
		for $x_0 \gtrless 0$.
		However, there are two distinct gradient-flow trajectories 
		$y^+$ and $y^-$ 
		starting at the origin, given
		$y^\pm_t \eqdef \pm \big( 1 - e^{-\lambda t} \big)a$
		for $t \geq 0$.
		
		In spite of this non-uniqueness, 
		we shall verify that this example satisfies our assumptions.
		Note that 
		$\slop{f}{x}
		= 
		\lambda  \tabs{|x|-a}$
		for all $x \in \R$.
		In particular, $f$ has finite slope at $0$, although it is not differentiable.
		Consequently, 
		$\frac{\slop{f}{x}^2}{f(x)} = 2 \lambda$
		for all $x \in \R$.
		It follows that Condition~\eqref{eq:r:ChatterjeeComparison:0} holds for all $x_0 \in \R$ with 
		$\alpha(x_0, r) = 2 \lambda$ (hence Condition~$(A)$ holds with~$\theta(u)=\sqrt{2u/\lambda}$), 
		provided 
		$r \geq | x_0 - a| \wedge | x_0 + a |$.
		Thus, at every point $x_0 \in \R$, the criterion provides the optimal result, in the sense that it yields the smallest possible ball centered at $x_0$ containing each gradient-flow trajectory starting at $x_0$.
	\end{ese}

	\begin{remark}[Restriction to path connected component]\label{r:OtherAlpha}
		The second inequality in Condition~{$(A)$} is required to hold for all 
		$y \in B_r(x_0)\cap \{0<f \leq f(x_0)\}$.
		However, in the proof of Theorem \ref{thm:ConvRateIntro}, this bound is needed only on the set $G(x_0,r)$ consisting of all points inside the ball that are reachable by the considered curve of maximal slope starting at $x_0$. 
		Therefore, Theorem \ref{thm:ConvRateIntro} would still hold if one replaces the set
		$B_r(x_0) \cap 
		\{ 0<f \leq f(x_0)\}$
		by $G(x_0, r)$ 
		in the definition in {$(A)$}.
		Of course, in practice $G(x_0,r)$ is often not explicitly known, so this condition might be not easy to check. 
		Instead of $G(x_0,r)$, one could also consider the path connected component $P(x_0,r)$ of $x_0$ in $B_r(x_0)\cap \{ 0<f \leq f(x_0)\}$ 
		and modify the definition of~{$(A)$} accordingly.
	
		The following modification of Example~\ref{ex:non-uniqueness} provides an example where it is useful to employ the modified assumption.
		Let $\lambda > 0$ and $a > 0$, and consider the function~$f \colon \R \to \R$ (see Fig.~\ref{fig:NonUniqueness:2}) given by
		\begin{align}
			\label{eq:f-PathConnected}
			f(x) = \min 
			\set{ 
				\max\set{\frac{\lambda}{2} (x-a)^2,\epsilon} \
				\frac{\lambda}{2} (x+a)^2
			} \fstop
		\end{align}
		For this function, Assumption~\eqref{eq:r:ChatterjeeComparison:0} is satisfied for every~$x_0<0$ and suitable $r>0$ when~$\alpha(x_0,r)$ is defined with~$P(x_0,r)$ in place of~$B_r(x_0)\cap \set{f\leq f(x_0)}$.
		However, it is not satisfied for any~$x_0<0$ yet sufficiently close to~$0$ when~$\alpha(x_0,r)$ is defined as in~\eqref{eq:def-alpha-comp}.
\end{remark}

	\begin{figure}[htb!]
		\begin{subfigure}[c]{0.45\textwidth}
			\includegraphics[scale=.5]{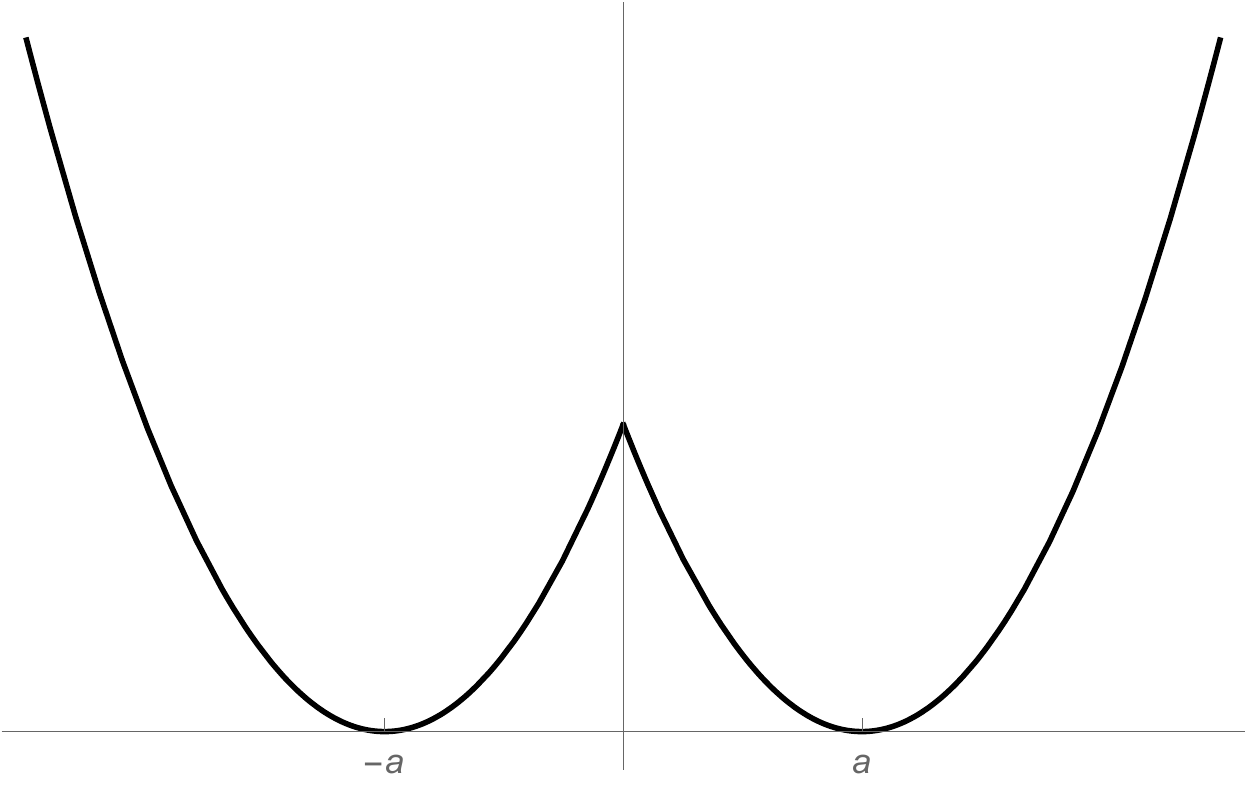}
			\subcaption{A non-smooth double-well potential~\eqref{eq:f-non-unique}.}
			\label{fig:NonUniqueness:1}
		\end{subfigure}
		\qquad
		\begin{subfigure}[c]{0.45\textwidth}
			\includegraphics[scale=.5]{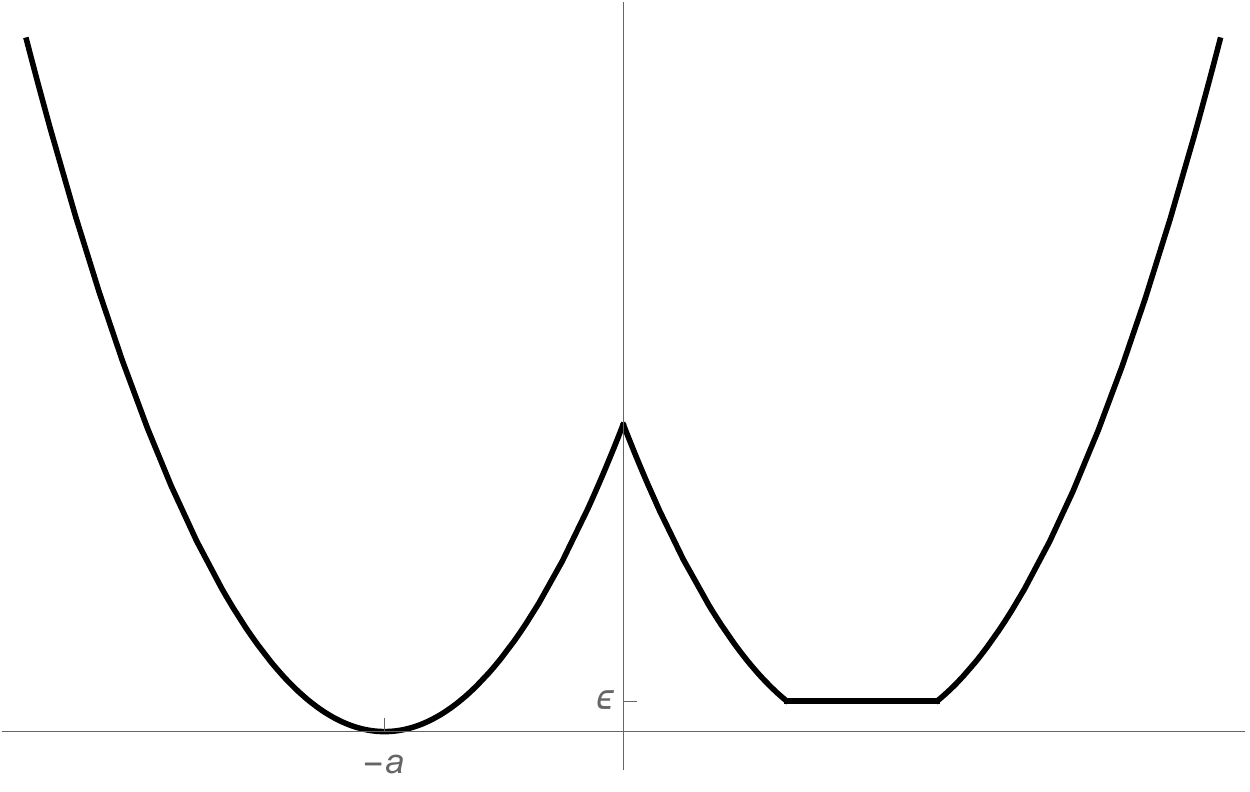}
			\subcaption{An asymmetric double-well potential~\eqref{eq:f-PathConnected}.}
			\label{fig:NonUniqueness:2}
		\end{subfigure}
		\caption{ The objective functions in Example~\ref{ex:non-uniqueness} and~ Remark \ref{r:OtherAlpha}}\label{fig:NonUniqueness}
	\end{figure}

	\section{Extension of gradient-flow trajectories}\label{sec:Extension}

	It is possible, even under Condition~$(A)$, that a curve of maximal slope defined on a finite interval~$[0,T)$ does not extend to a curve of maximal slope on~$[0,\infty)$.
	The following simple example illustrates this phenomenon.

	\begin{ese}
		\begin{figure}[htb!]
			\includegraphics[scale=.5]{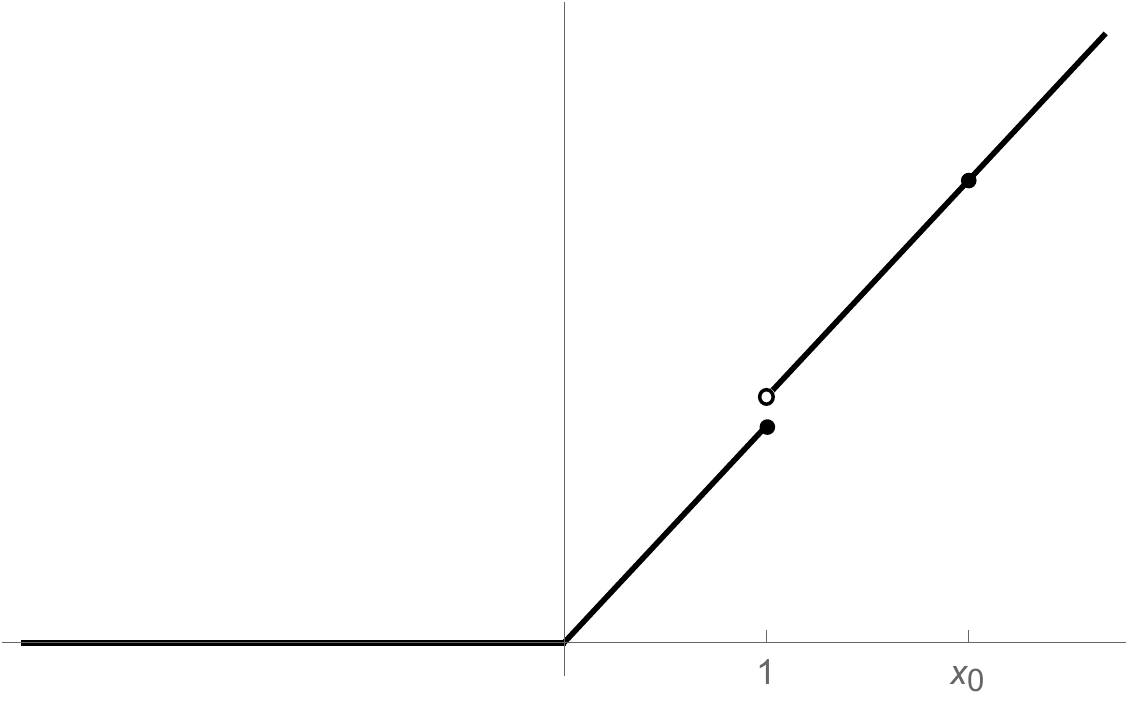}
			\caption{There is no curve of maximal slope with $T=\infty$ starting at~$x_0$.}
			\label{fig:discontinuity}
		\end{figure}
		
		For fixed~$m,\eps>0$, consider the lower-semicontinuous function $f\colon \R \to [0,\infty)$ defined by
		\[
		f(x) = 
		m\,  x \, {\bf 1}_{[0,1]}(x)
		+
		(m x + \eps) {\bf 1}_{(1,\infty)}(x) \fstop
		\]
		See Figure \ref{fig:discontinuity}.
		Let $x_0 >1$ and fix $r>0$. Then $f(x_0) =m x_0+\eps$ and
		Condition~$(A)$ is satisfied with 
		\[
		\theta(u) = \frac{2}{m} \sqrt{u (m x_0 + \epsilon)}
		\]
		and
		$
		r \geq 2(m x_0+\eps)/m.
		$
		On the interval $[0,T)$
		with $T = \frac{x_0-1}{m}$,
		there exists a unique curve of maximal slope 
		$\seq{y_t}_{t\in [0,T)}$ 
		starting from $x_0$.
		This is the curve which travels at constant speed $m$ towards the discontinuity of $f$, namely
		$y_t = x_0 - m t$.
		However, there is no extension of~$\seq{y_t}_{t\in [0,T)}$ to a curve of maximal slope defined on~$[0,T')$ for any~$T'>T$, since~$t\mapsto f(y_t)$ cannot be (absolutely) continuous on~$[0,T')$.
	\end{ese}
	
	Of course, the curve in this example can be naturally extended to $[0,\infty)$ by defining $y_T = 1$, and concatenating a new curve of maximal slope starting from there. 
	The resulting curve, given by
	$y_t = (x_0 - m t)_+$ for $t \geq 0$,
	satisfies the exponential convergence rates of Corollary \ref{cor:chatt-analogue-cts}, even though it is not a curve of maximal slope in the sense of Definition \ref{d:CMS}.
	
	Theorem \ref{t: convergence glued curves} below shows that, under Condition~$(A)$, concatenated curves of maximal slope always satisfy the convergence rates of Theorem \ref{thm:ConvRateIntro}.
	The key ingredient is the following simple observation, which shows that Condition~$(A)$ is preserved under curves of maximal slope $(y_t)_{t \geq 0}$ in a suitable sense: 
	if the condition holds at time $0$ for~$x_0$ and some $r>0$, then it holds at any time $t \geq 0$ for the point~$y_t$ and the radius 
	$r - \mssd(y_t, y_0)$ and with the same parameter function $\theta$.

	\begin{remark}[Assumption preserved along the flow]\label{rmk:ass-pres-flow}
		Suppose that Condition {$(A)$} holds for $x_0\in \dom{f}$ and $r>0$. Let $\seq{y_t}_{t\in [0,T)}$ be a curve of maximal slope, and extend it to $\seq{y_t}_{t\in[0,T]}$ using Theorem \ref{thm:ConvRateIntro}. 
		Then we have by~\eqref{eq:l:ThetaEstimate:0} that for $t\in [0,T)$,
		\[
		(\theta \circ f)(y_t) \leq (\theta \circ f)(x_0) - \mssd(x_0,y_t) \leq r - \mssd(x_0,y_t),
		\]
		which implies that Condition~$(A)$ holds for $y_t$ (in place of~$x_0$) and $r-\mssd(x_0,y_t)$ (in place of~$r$); note that if $f(y_t)=0$ it is possible that $r-\mssd(x_0,y_t) =0 $; otherwise this quantity is strictly positive. 
		If $f$ is lower semicontinuous, then 
		Condition $(A)$ holds also for~$y_T$ and~$r-\mssd(x_0,y_T)$,
		as can be seen by taking limits.
	\end{remark}

	\begin{theorem}\label{t: convergence glued curves} 
		Let $f : X \to [0, \infty]$ be a lower semicontinuous function on a complete metric space $(X,\mssd)$ 
		and suppose that 
		$x_0 \in X$ and $r > 0$ satisfy Condition~$(A)$ for some parameter function $\theta$.
		Let $K\geq 1$, $0 = T_0 < T_1 <\ldots < T_K = T \leq \infty$ and $(y_t)_{t\in [T_i, T_{i+1}) }$ be curves of maximal slope starting from $x_i$, where 	$x_i = \lim_{t \uparrow T_i} y_t \in B_r(x_0)$ for $1\leq i \leq K-1$. Then, setting $t_*\coloneqq \inf\set{t\in [0,T): f(y_t)=0} \wedge T$, the following assertions hold:
		\begin{enumerate}[$(i)$]
			\item \label{it:thm-glue confinement} $y_t \in \overline{B_r(x_0)}$ for all $0\leq t < T$;
			\item \label{it:thm-glue limit}  
			$y_T := \lim_{t\to T} y_t$ exists and belongs to $\overline{B_r(x_0)}$;
			\item \label{it:thm-glue decay distance}$\mssd(y_s,y_t) \leq (\theta\circ f)(y_s) - (\theta\circ f)(y_t)$ for all $0\leq t \leq T$.
			\item \label{it:thm-glue decay functional} 		for all  $0 \leq t \leq t_*$
			\begin{align}
				\label{eq:d-rate-glue}
				\Gamma\big(\mssd(y_t,y_T)\big)& \leq \Gamma(r) - t, 
				\\
				\label{eq:f-rate-glue}
				(\eta\circ f)(y_t)		
				& \leq 
				(\eta\circ f)(x_0)
				- t.
			\end{align} 
		\end{enumerate}
	\end{theorem}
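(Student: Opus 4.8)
The plan is to reduce everything to the single‑piece result Theorem~\ref{thm:ConvRateIntro} by a finite induction over the pieces $i=0,1,\dots,K-1$, transferring the "radius budget'' from one piece to the next exactly as in Remark~\ref{rmk:ass-pres-flow}. Writing $r_i\coloneqq r-\mssd(x_0,x_i)$, the induction hypothesis I would carry is that Condition~$(A)$ holds for the pair $(x_i,r_i)$ and that $r_i>0$. The base case $i=0$ is the assumption, with $r_0=r$. Since each $x_i\in B_r(x_0)$ for $1\le i\le K-1$ by hypothesis, one has $\mssd(x_0,x_i)<r$ and hence $r_i>0$ at every step, which is precisely what prevents the induction from stalling.

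For the inductive step I would apply Theorem~\ref{thm:ConvRateIntro} to the curve of maximal slope on $[T_i,T_{i+1})$ starting at $x_i$, with radius $r_i$. This yields confinement $y_t\in\overline{B_{r_i}(x_i)}$, the existence of the limit $x_{i+1}=\lim_{t\uparrow T_{i+1}}y_t\in\overline{B_{r_i}(x_i)}$, and the within‑piece distance bound $\mssd(y_s,y_t)\le(\theta\circ f)(y_s)-(\theta\circ f)(y_t)$ for $T_i\le s\le t\le T_{i+1}$. Since $B_{r_i}(x_i)\subseteq B_r(x_0)$ by the triangle inequality (if $\mssd(x,x_i)<r_i$ then $\mssd(x,x_0)<r_i+\mssd(x_i,x_0)=r$), taking closures gives $\overline{B_{r_i}(x_i)}\subseteq\overline{B_r(x_0)}$, which is the confinement claimed in part $(i)$. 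To propagate Condition~$(A)$ to $x_{i+1}$, I would telescope the within‑piece bounds already obtained to get $\mssd(x_0,x_{i+1})\le(\theta\circ f)(x_0)-(\theta\circ f)(x_{i+1})$, whence $(\theta\circ f)(x_{i+1})\le r-\mssd(x_0,x_{i+1})=r_{i+1}$; the slope inequality in Condition~$(A)$ at $(x_{i+1},r_{i+1})$ is inherited for free from the original one, because $B_{r_{i+1}}(x_{i+1})\subseteq B_r(x_0)$ and $f(x_{i+1})\le f(x_0)$ (the latter since $f$ is non‑increasing along the glued curve). This closes the induction and gives parts $(i)$ and $(ii)$; the case $T=\infty$ is covered by applying Theorem~\ref{thm:ConvRateIntro} to the final infinite piece $[T_{K-1},\infty)$.

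For part $(iii)$, fix $0\le s\le t\le T$. If $s,t$ lie in the same piece the bound is immediate; otherwise, with $s$ in piece $a$ and $t$ in piece $b$, $a<b$, I would insert the break points and telescope,
\[
\mssd(y_s,y_t)\le \mssd(y_s,y_{T_{a+1}})+\sum_{k=a+1}^{b-1}\mssd(y_{T_k},y_{T_{k+1}})+\mssd(y_{T_b},y_t),
\]
bounding each summand by the corresponding increment of $\theta\circ f$ via the within‑piece bound (the endpoint values being the values at the gluing points $x_k$, where the two adjacent pieces agree), so that the sum collapses to $(\theta\circ f)(y_s)-(\theta\circ f)(y_t)$. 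The analogous telescoping of the energy bound \eqref{eq:l:EnergyEstimate:0} gives $(\eta\circ f)(y_t)\le(\eta\circ f)(x_0)-t$ for $0\le t\le t_*$, i.e.\ \eqref{eq:f-rate-glue}; here one extends \eqref{eq:l:EnergyEstimate:0} to each gluing endpoint by a limit, using that a possible downward jump of $f$ at $T_k$ only enlarges the jump of $\eta\circ f$ (as $\eta$ is increasing by Lemma~\ref{lem:aux}) and hence strengthens the inequality. Finally \eqref{eq:d-rate-glue} follows exactly as in the proof of Theorem~\ref{thm:ConvRateIntro}\ref{i:thm-conv-3}: apply the increasing function $\Gamma=\eta\circ\theta^{-1}$ to the bound $\mssd(y_t,y_T)\le(\theta\circ f)(y_t)$ from part $(iii)$ and combine with $(\eta\circ f)(x_0)=\Gamma\big((\theta\circ f)(x_0)\big)\le\Gamma(r)$, extending to $t=t_*$ by continuity.

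The main obstacle is the apparent circularity of the induction: running Theorem~\ref{thm:ConvRateIntro} on piece~$i$ requires Condition~$(A)$ at $x_i$, yet the radius budget $r_i$ at $x_i$ is itself produced only by the telescoped distance bounds of the earlier pieces. Resolving this requires carrying the distance (and energy) budget as part of the induction hypothesis and checking at each step that $r_i>0$; this is exactly where the hypothesis $x_i\in B_r(x_0)$ and the lower semicontinuity of $f$ (needed to pass Condition~$(A)$ to the limit points $x_i$, cf.\ Remark~\ref{rmk:ass-pres-flow}) enter. The remaining bookkeeping, namely the favourable downward jumps of $f$ at the gluing times caused by $u\mapsto f(y_u)$ being absolutely continuous only within each piece, is routine once one observes that the monotonicity of $\theta$ and $\eta$ makes every such jump work in our favour.
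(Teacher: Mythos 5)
Your proposal is correct and follows essentially the same route as the paper: parts (i)--(iii) via repeated application of Theorem~\ref{thm:ConvRateIntro} together with the propagation of Condition~$(A)$ along the flow (your radius budget $r_i = r - \mssd(x_0,x_i)$ is exactly the content of Remark~\ref{rmk:ass-pres-flow}), a telescoping sum for the distance bound, and the derivation of \eqref{eq:d-rate-glue} from \eqref{eq:f-rate-glue} exactly as in Theorem~\ref{thm:ConvRateIntro}\ref{i:thm-conv-3}. The only cosmetic difference is that for \eqref{eq:f-rate-glue} the paper runs a formal induction on the number of pieces $K$ where you telescope the energy bound across the gluing times, handling the (favourable) downward jumps of $f$ there; these are the same argument.
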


	\begin{proof}

		\ref{it:thm-glue confinement} and  \ref{it:thm-glue limit} follow from a repeated application of Theorem \ref{thm:ConvRateIntro} and Remark \ref{rmk:ass-pres-flow}.

		\smallskip
		\ref{it:thm-glue decay distance}: \ 
		Recall first that, for $K = 0, \ldots, K - 1$ and 
		$T_k \leq s \leq t \leq T_{k+1}$, by \eqref{eq:theta-estimate-thm}
		\begin{align*}
			\mssd(y_s,y_t) \leq (\theta\circ f){(y_s)} -(\theta\circ f){(y_t)} \fstop
		\end{align*}
		Therefore by a telescoping sum argument, the same inequality holds for all $0 \leq s \leq t \leq T$.
		
		\smallskip

		\ref{it:thm-glue decay functional}: \
		If $K = 1$, the claim follows from Theorem \ref*{thm:ConvRateIntro}.
		Proceeding by induction, we assume that the claim holds for all $K \leq \bar K$. We shall show that it also holds for $K = \bar K + 1$.
		For this purpose, suppose that $t_*\geq \bar{K}$ and let $T_{\bar{K}}\leq t\leq t_*$, otherwise the conclusion is trivial. Then notice that the induction hypothesis yields
		$(\eta\circ f)(x_{\bar K}) \leq (\eta\circ f)(x_0) - T_{\bar{K}}$.
		Moreover, applying Theorem \ref{thm:ConvRateIntro} for $\tond{y_t}_{ t\in [T_{\bar K}, T_{\bar K + 1}]}$, we find
		\begin{align*}
			(\eta\circ f)(y_t) \leq (\eta\circ f)(x_{\bar K}) - (t-T_{\bar{K}}).
		\end{align*}
		Combining these bounds, \eqref{eq:f-rate-glue} follows.
		Finally, \eqref{eq:d-rate-glue} follows from \ref{it:thm-glue decay distance} and \eqref{eq:f-rate-glue} in the same way as in the proof of Theorem \ref{thm:ConvRateIntro}.
	\end{proof}

	\begin{remark}
		Theorem \ref{t: convergence glued curves} still holds true if we replace the sub-level set of~$f$ in condition~$(A)$ with the path-connected component~$P(x_0,r)$ as in Remark~\ref{r:OtherAlpha}; however the proof does not work if we instead use~$G(x_0,r)$, since, in this case, the inequality $(\theta'\circ f)\cdot\slo{f}\geq 1$ may not hold on~$G\big(y_{T_i},r-\mssd(x_0,y_{T_i})\big)$.
	\end{remark}

	\section{Convergence of the discrete scheme}\label{sec: convergence of discrete scheme}
	
	This section contains the proof of Theorem \ref{t:conv-disc-new}, which deals with the convergence of proximal point sequences to a global minimiser.
	Our proof is based on adaptation of the arguments in \cite[Thm. 24]{Bol-Dan-Ley-Maz-2010}).
	A key tool is the following result by Ioffe \cite{Ioffe:2000}; see also \cite[Lemma 2.5]{Drusvyatskiy-Ioffe-Lewis:2015}.

	\begin{lemma}[Ioffe's Lemma]\label{lem:Ioffe-lemma}
		Let $g\colon X \to [-\infty,\infty]$ be a lower semicontinuous functional on a complete metric space $(X,\mssd)$. 
		Let $x \in \dom{g}$ and suppose that there are constants $\delta \leq g(x)$
		and $R > 0$ such that
		\footnote{
			Note that the corresponding result in \cite{Drusvyatskiy-Ioffe-Lewis:2015} involves the  closed ball~$\set{y\in X
			: \mssd(x,y)\leq R}$ instead of the open ball $B_R$. It is easy to see that the statements are equivalent, possibly after taking a slightly smaller radius.}
		\begin{align*}
			\slop{g}{u} & \geq v 
			\quad\text{ for all } 
			u \in 
			B_R(x)
			\cap \graf*{\delta < g\le g(x)} 
		\end{align*}
		for some $v > (g(x) - \delta) / R$.
		Then:
		\begin{align}
			\mssd \tparen{x, \graf*{g\leq \delta}} \leq \frac{g(x)-\delta}{v}.
		\end{align}
	\end{lemma}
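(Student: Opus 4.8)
The plan is to argue by contradiction using Ekeland's variational principle, whose output---a point of small slope---will clash with the standing lower bound on $\slop{g}{\emparg}$. First I would dispose of the trivial case $g(x)=\delta$: then the right-hand side vanishes and $x$ itself lies in $\graf*{g\leq \delta}$, so there is nothing to prove. Hence assume $s\eqdef g(x)-\delta>0$, write $d_0\eqdef \mssd\tparen{x,\graf*{g\leq \delta}}\in(0,\infty]$, and suppose towards a contradiction that $d_0> s/v$. Since the hypothesis $v>(g(x)-\delta)/R$ gives $s/v<R$, and also $s/v<d_0$ by assumption, I can fix a scale $\lambda$ with $s/v<\lambda<\min\set{R,d_0}$; the two strict inequalities here are what make the whole argument run.

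To manufacture the point of small slope I would apply Ekeland's variational principle to the truncated functional $\tilde g\eqdef g\vee \delta$, which is lower semicontinuous, bounded below by $\delta$, proper (as $\tilde g(x)=g(x)<\infty$), and satisfies $\tilde g(x)=g(x)=\delta+s$ because $g(x)>\delta$; in particular $\tilde g(x)\leq \inf \tilde g+s$. Applying Ekeland with suboptimality $\eps=s$ and scale $\lambda$ produces a point $\bar x$ with $\tilde g(\bar x)\leq \tilde g(x)$, with $\mssd(x,\bar x)\leq \lambda$, and with the minimality bound $\tilde g(w)\geq \tilde g(\bar x)-\tfrac{s}{\lambda}\mssd(w,\bar x)$ for every $w\in X$.

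Next I would place $\bar x$ inside the region where the slope hypothesis applies and then read off the slope of $g$ itself. Since $\mssd(x,\bar x)\leq \lambda<d_0$, no point within distance $\lambda$ of $x$ can belong to $\graf*{g\leq\delta}$, so $g(\bar x)>\delta$; combined with $\tilde g(\bar x)\leq \tilde g(x)=g(x)$ and $\lambda<R$ this gives $\bar x\in B_R(x)\cap\graf*{\delta<g\leq g(x)}$. For the slope, I would use the buffer $d_0-\lambda>0$: whenever $\mssd(w,\bar x)<d_0-\lambda$ the triangle inequality yields $\mssd(x,w)<d_0$, hence $g(w)>\delta$ and $\tilde g(w)=g(w)$, so the Ekeland bound becomes $[g(w)-g(\bar x)]_-\leq \tfrac{s}{\lambda}\mssd(w,\bar x)$. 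Passing to the limit $w\to\bar x$ gives $\slop{g}{\bar x}\leq s/\lambda$. But the hypothesis forces $\slop{g}{\bar x}\geq v$, so $v\leq s/\lambda<v$, the desired contradiction, which proves $d_0\leq s/v=(g(x)-\delta)/v$.

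The main obstacle is precisely this last transfer step. Ekeland only controls the slope of the \emph{truncation} $\tilde g$, and truncating from below can only decrease the descending slope, so one cannot directly conclude anything about $\slop{g}{\bar x}$. The device that resolves this is the strict inequality $\lambda<d_0$, guaranteeing that $\tilde g$ and $g$ agree on an entire neighbourhood of $\bar x$ before the $\limsup$ defining $\slop{g}{\bar x}$ is evaluated. The remaining care is the simultaneous bookkeeping that keeps $\bar x$ inside $B_R(x)$ and inside $\graf*{g>\delta}$; note finally that the same argument covers the case $d_0=\infty$, so it also establishes that $\graf*{g\leq\delta}$ is nonempty.
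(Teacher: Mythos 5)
Your proof is correct, but note that the paper offers no proof of this lemma to compare against: it is imported verbatim from the literature (Ioffe, and Lemma~2.5 of Drusvyatskiy--Ioffe--Lewis), with only a footnote on the open-versus-closed ball discrepancy. What you have written is a self-contained proof by the standard mechanism underlying those references: truncate to $\tilde g \eqdef g \vee \delta$ (proper, lower semicontinuous, bounded below, with $\tilde g(x) \leq \inf \tilde g + s$ where $s \eqdef g(x)-\delta$), apply Ekeland's variational principle at a scale $\lambda$ chosen in the nonempty open interval $(s/v, \min\set{R, d_0})$, and check that the resulting point $\bar x$ lies in $B_R(x)\cap\set{\delta < g \leq g(x)}$ while having small slope. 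Every step checks out; in particular you correctly isolate and resolve the one real subtlety, namely that Ekeland controls the slope of $\tilde g$ rather than $g$: since $\mssd(x,\bar x)\leq\lambda < d_0$, the two functions agree on the ball of radius $d_0-\lambda>0$ around $\bar x$, so the limsup defining $\slop{g}{\bar x}$ only sees points where $\tilde g = g$, giving $\slop{g}{\bar x}\leq s/\lambda < v$ and the desired contradiction. Two small bonuses of your argument: it proves the open-ball formulation directly, so the paper's footnote on equivalence with the closed-ball version in the reference is not needed; and, as you note, it shows $\set{g\leq\delta}\neq\emp$ since the case $d_0=\infty$ is covered. The only cosmetic slip is declaring $d_0\in(0,\infty]$ before assuming $d_0>s/v$; a priori $d_0$ could be $0$, but then the conclusion is trivial, so nothing is lost.
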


 Throughout the remainder of this section we impose the following standing assumptions that are in force without further mentioning: 
 \begin{itemize}
	\item $f \colon X\to [0,\infty]$ 
	is a proper and lower semicontinuous
	functional on a complete metric space $(X,\mssd)$;
	\item $x_0 \in \dom{f}$ and $r > 0$ satisfy 
	Condition {$(A')$} for some 
 	parameter function $\theta$;
	\item there exists a time-step
	$\bar{\tau} > 0$ (that will be fixed from now on)
	such that, for all $x \in B_r(x_0)\cap \graf*{f\le f(x_0)} $ and $\tau \in (0, \bar \tau)$, the functional
	\begin{align}
		\label{eq:MMS}
		X \ni y 
			\longmapsto 
			f(y) + \frac{1}{2 \tau} \mssd(x,y)^2
	\end{align}
	has at least one global minimiser. 
	The non-empty set of minimisers will be denoted by $J_\tau (x)$. 

 \end{itemize}

The latter condition is 
 satisfied 
	with $\bar{\tau} = \infty$
	if~$(X,\mssd)$ is
	proper, i.e., if all closed $\mssd$-bounded sets in~$X$ are compact.

	The following result contains some fundamental properties of $J_\tau$, which can be found in \cite{AmbGigSav08} under slightly different assumptions. 
	The same proofs apply to our setting.
	
	\begin{lemma}
		\label{l:length of backward euler step}
		For $x \in B_r(x_0)  \cap \graf*{f\le f(x_0)}$ 
		 the following assertions hold:
		\begin{align}
			\label{eq:f-monotonicity}
			&f(z_0)  \geq f(z_1)
			&&\text{for all } 
			0 < \tau_0 \leq \tau_1 < \bar \tau \text{ and } z_0 \in J_{\tau_0}(x), z_1 \in J_{\tau_1}(x)\semicolon \\
			\label{eq:slope-bound}
			&\mssd(x, z)  \geq \tau \slop{f}{z} 
			&& \text{for all } 
			0 < \tau < \bar \tau \text{ and } z\in J_\tau(x) \semicolon \\
			\label{eq:DeGiorgi}
			& f(z) 
			+ \frac{\mssd(x, z)^2}{2 \tau}
			+ \int_0^\tau \frac{\mssd(x, z_s)^2}{2s^2} \diff s
			= f(x) 
			&&\text{for all } 
			0 < \tau < \bar \tau \text{ and } z\in J_\tau(x), z_s \in J_s(x)\fstop
		\end{align}
	\end{lemma}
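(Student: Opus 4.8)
The plan is to treat the three assertions in turn, using only the minimality that defines $J_\tau(x)$ together with the lower semicontinuity of $f$; these are the metric reformulations of classical backward Euler estimates, so the arguments are variational rather than computational. For \eqref{eq:f-monotonicity} I would play the two minimality inequalities against each other. Fix $z_0 \in J_{\tau_0}(x)$ and $z_1 \in J_{\tau_1}(x)$ with $\tau_0 \le \tau_1$; testing the minimality of $z_0$ with $z_1$ and that of $z_1$ with $z_0$ and adding the two inequalities cancels the $f$-terms and leaves
\begin{align*}
\Big(\tfrac{1}{2\tau_0}-\tfrac{1}{2\tau_1}\Big)\mssd(x,z_0)^2 \le \Big(\tfrac{1}{2\tau_0}-\tfrac{1}{2\tau_1}\Big)\mssd(x,z_1)^2,
\end{align*}
so $\mssd(x,z_0)\le\mssd(x,z_1)$ because $\tau_0\le\tau_1$. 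Feeding this back into the minimality of $z_1$ tested with $z_0$ gives $f(z_1)\le f(z_0)$, which is the claim. For \eqref{eq:slope-bound} I would again start from minimality of $z\in J_\tau(x)$ tested with an arbitrary $y$, and rewrite it, using $\mssd(x,y)-\mssd(x,z)\le\mssd(y,z)$, as
\begin{align*}
f(z)-f(y)\le \tfrac{1}{2\tau}\big(\mssd(x,y)^2-\mssd(x,z)^2\big)\le \tfrac{1}{2\tau}\,\mssd(y,z)\big(\mssd(x,y)+\mssd(x,z)\big).
\end{align*}
Dividing the positive part by $\mssd(y,z)$ and letting $y\to z$ yields $\slop{f}{z}\le \mssd(x,z)/\tau$, which is \eqref{eq:slope-bound}.

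The substantial part is the De Giorgi identity \eqref{eq:DeGiorgi}, which I would obtain by analysing the value function $\psi(\tau):=\min_{y\in X}\big\{f(y)+\tfrac{1}{2\tau}\mssd(x,y)^2\big\}$. Testing with $y=x$ gives $\psi\le f(x)<\infty$, and since $f\ge 0$ every $z_\tau\in J_\tau(x)$ satisfies the crude bound $\mssd(x,z_\tau)^2\le 2\tau\,\psi(\tau)\le 2\tau f(x)$; inserting this into the two one-sided comparisons obtained by using $z_{\tau_0}$ and $z_{\tau_1}$ as mutual competitors shows that $\psi$ is Lipschitz on every compact subinterval of $(0,\bar\tau)$, hence absolutely continuous there. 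Next, at any point $\tau$ of differentiability of $\psi$ and for any $z_\tau\in J_\tau(x)$, the smooth function $\sigma\mapsto f(z_\tau)+\tfrac{1}{2\sigma}\mssd(x,z_\tau)^2$ lies above $\psi$ and touches it at $\sigma=\tau$; comparing derivatives at the contact point gives the envelope identity $\psi'(\tau)=-\mssd(x,z_\tau)^2/(2\tau^2)$. Integrating this over $[\epsilon,\tau]$ by the fundamental theorem of calculus and sending $\epsilon\downarrow 0$ — using $\mssd(x,z_\epsilon)\to 0$ together with the lower semicontinuity of $f$ to identify $\lim_{\epsilon\downarrow0}\psi(\epsilon)=f(x)$, and monotone convergence for the nonnegative integrand — produces exactly \eqref{eq:DeGiorgi} after rewriting $\psi(\tau)=f(z)+\mssd(x,z)^2/(2\tau)$.

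The main obstacle is the selection-and-integrability issue in \eqref{eq:DeGiorgi}: a priori different minimisers $z_s\in J_s(x)$ could have different distances to $x$, and the only elementary control, $\mssd(x,z_s)^2\le 2s f(x)$, is not integrable against $s^{-2}$ near $0$. Both difficulties are dissolved by the envelope identity $\psi'(s)=-\mssd(x,z_s)^2/(2s^2)$: it pins $\mssd(x,z_s)$ to the selection-independent value $-2s^2\psi'(s)$ for a.e.\ $s$, and it forces the improper integral to converge, since it equals the finite increment $f(x)-\psi(\tau)$ of the absolutely continuous function $\psi$.
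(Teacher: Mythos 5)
The paper does not actually prove this lemma: it disposes of all three assertions by citing \cite[Lem.~3.1.2, Lem.~3.1.3, Thm.~3.1.4]{AmbGigSav08}, adding only the remark that ``the same proofs apply to our setting.'' Your proposal supplies those proofs in full, and the arguments you chose are essentially the ones from that monograph: the mutual-competitor comparison for \eqref{eq:f-monotonicity}, the triangle-inequality estimate $f(z)-f(y)\le\tfrac{1}{2\tau}\mssd(y,z)\bigl(\mssd(x,y)+\mssd(x,z)\bigr)$ for \eqref{eq:slope-bound}, and, for \eqref{eq:DeGiorgi}, the Moreau--Yosida value function $\psi(\tau)=\min_y\{f(y)+\tfrac{1}{2\tau}\mssd(x,y)^2\}$ together with its local Lipschitz property and the envelope identity $\psi'(\tau)=-\mssd(x,z_\tau)^2/(2\tau^2)$ at a.e.~$\tau$, valid for \emph{every} $z_\tau\in J_\tau(x)$. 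Your handling of the two real subtleties in \eqref{eq:DeGiorgi} is correct: the envelope identity makes the integrand selection-independent a.e.\ and summable near $0$ (where the crude bound $\mssd(x,z_s)^2\le 2sf(x)$, which uses $f\ge 0$, would not suffice), and $\psi(\epsilon)\to f(x)$ follows from $\mssd(x,z_\epsilon)\to0$, lower semicontinuity, and $\psi\le f(x)$. What your self-contained route buys is a verification of the paper's claim that the cited proofs survive the change of hypotheses --- you use only $f\ge0$, lower semicontinuity, and the standing existence assumption for minimisers, in place of the coercivity-type assumptions of \cite{AmbGigSav08}; what the paper's citation buys is brevity.

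One genuine, though minor, gap: in your proof of \eqref{eq:f-monotonicity}, the deduction ``$\mssd(x,z_0)\le\mssd(x,z_1)$ because $\tau_0\le\tau_1$'' is vacuous when $\tau_0=\tau_1$, since then the coefficient $\tfrac{1}{2\tau_0}-\tfrac{1}{2\tau_1}$ vanishes and your summed inequality reads $0\le 0$; so your argument only proves the case $\tau_0<\tau_1$. This cannot be repaired with the tools you use: for $\tau_0=\tau_1=\tau$ the assertion says that $f$ is constant on $J_\tau(x)$, and minimality plus lower semicontinuity alone do not give this (take $X=\{x,a,b\}$ with $\mssd(x,a)=1$, $\mssd(x,b)=2$, $\mssd(a,b)=3/2$, $f(x)$ large, $f(a)=1$, $f(b)=0$, $\tau=3/2$; then $J_\tau(x)=\{a,b\}$ but $f(a)\ne f(b)$ --- this example does not satisfy the section's standing Condition~$(A')$, but neither your proof nor the cited lemma invokes that condition). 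In practice this is harmless: the strict case is the only one the paper ever uses (in Lemma~\ref{lem:concave-theta}, where $z_s\in J_s(y_k)$ with $s<\tau$), and it is the case covered by the reference. You should either restrict the statement to $\tau_0<\tau_1$ or flag explicitly that the equal-parameter case is not covered by your argument.
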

	
	\begin{proof}
		Inequality ~\eqref{eq:f-monotonicity} can be found in \cite[Lem.~3.1.2]{AmbGigSav08};
		\eqref{eq:slope-bound} can be found in \cite[Lem.~3.1.3]{AmbGigSav08};
		\eqref{eq:DeGiorgi} can be found in  \cite[Thm.~3.1.4, Eqn.~(3.1.12)]{AmbGigSav08}.
	\end{proof}

	In the following result we consider a slightly more general  notion of proximal point sequences, as we allow the step-size $\tau = \tau_k$ to depend on the step $k$.
	This will be useful in Lemma \ref{lem:concave-theta} below.
	
	\begin{lemma}[Confinement and distance bound]\label{l:dist-bound-disc}
		Let $\seq{y_k}_{k=0}^N$ with $N\in\N$ be a proximal point sequence starting at $x_0$
		with step-sizes $\tau_k \in (0, \bar \tau)$ for $0 \leq k < N$. 
		Then for all $0\leq k \leq \ell \leq N$ we have the distance bound
		\begin{align}\label{eq:dist-bound-discrete}
			\mssd(y_k ,y_\ell) \leq (\theta\circ f)(y_k) - (\theta\circ f)(y_\ell).
		\end{align}
		In particular, $y_k \in B_r(x_0)$ for all $0\leq k\leq N$.
	\end{lemma}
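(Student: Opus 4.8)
The plan is to reduce everything to the \emph{single-step} estimate
\begin{equation*}
\mssd(y_k,y_{k+1}) \le (\theta\circ f)(y_k) - (\theta\circ f)(y_{k+1})\comma \qquad 0\le k < N\comma
\end{equation*}
from which \eqref{eq:dist-bound-discrete} for an arbitrary pair $0\le k\le \ell\le N$ follows by a telescoping sum, since $\mssd(y_k,y_\ell)\le\sum_{j=k}^{\ell-1}\mssd(y_j,y_{j+1})$. The confinement $y_k\in B_r(x_0)$ is then the case $k=0$: $\mssd(x_0,y_\ell)\le(\theta\circ f)(x_0)-(\theta\circ f)(y_\ell)\le(\theta\circ f)(x_0)<r$, where the last (strict) inequality is exactly Condition~{$(A')$}. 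I would organise the whole argument as an induction on $\ell$, so that the confinement of $y_0,\dots,y_k$ and the bound $\mssd(x_0,y_k)\le(\theta\circ f)(x_0)-(\theta\circ f)(y_k)$ are available when analysing the passage from $y_k$ to $y_{k+1}$; this is what keeps Condition~{$(A')$} applicable along the sequence, in the spirit of Remark~\ref{rmk:ass-pres-flow}.

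For the single step I would use the De~Giorgi variational interpolation supplied by Lemma~\ref{l:length of backward euler step}. Fix $k$, set $x\defeq y_k$, and for $s\in(0,\tau_k]$ let $z_s\in J_s(x)$ be a selection with $z_{\tau_k}=y_{k+1}$ and $z_0\defeq x$ (existence, measurability and absolute continuity of the interpolation as in~\cite{AmbGigSav08}). Writing $D(s)\defeq\mssd(x,z_s)$, differentiating the De~Giorgi identity~\eqref{eq:DeGiorgi} in the endpoint variable gives, for a.e.\ $s$, the relation $-\tfrac{\diff}{\diff s}f(z_s)=\tfrac{1}{s}D(s)D'(s)$; since $s\mapsto f(z_s)$ is non-increasing by~\eqref{eq:f-monotonicity}, this forces $D'\ge0$, so $s\mapsto\mssd(x,z_s)$ is non-decreasing with $D(0)=0$. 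Combining the slope bound~\eqref{eq:slope-bound} in the form $\slo{f}(z_s)\le D(s)/s$ with the slope inequality of Condition~{$(A')$} in the form $(\theta'\circ f)(z_s)\,\slo{f}(z_s)\ge1$ yields the pointwise estimate $(\theta'\circ f)(z_s)\,D(s)/s\ge1$, hence
\begin{equation*}
-\tfrac{\diff}{\diff s}(\theta\circ f)(z_s) = (\theta'\circ f)(z_s)\,\tfrac{1}{s}\,D(s)\,D'(s) \ge D'(s)\fstop
\end{equation*}
Integrating over $(0,\tau_k]$ then produces the sharp bound $\mssd(x,y_{k+1})=D(\tau_k)\le(\theta\circ f)(x)-(\theta\circ f)(y_{k+1})$, with \emph{no} convexity or concavity hypothesis on $\theta$. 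It is precisely here that the interpolation is indispensable: a crude one-step comparison using only $\mssd(y_k,y_{k+1})^2\le2\tau_k\big(f(y_k)-f(y_{k+1})\big)$ and the slope bound loses a factor $2$ and fails to close.

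To invoke Condition~{$(A')$} at each $z_s$ one must know $z_s\in B_r(x_0)\cap\set{0<f\le f(x_0)}$. Membership in the sub-level set is immediate from~\eqref{eq:f-monotonicity} and the inductive hypothesis $f(y_k)\le f(x_0)$, while $f(z_s)>0$ follows from $f(z_s)\ge f(y_{k+1})$ as long as $f(y_{k+1})>0$; the boundary case $f(y_{k+1})=0$ I would handle by running the estimate on $(0,\tau_k)$ and letting $s\uparrow\tau_k$, using $\theta(0)=0$ and lower semicontinuity of $f$. The spatial confinement $z_s\in B_r(x_0)$ is the delicate point, and I would establish it by a first-exit argument: let $\sigma^*$ be the first $s$ at which $\mssd(x_0,z_s)$ reaches $r$; for $s<\sigma^*$ the estimate above applies and gives
\begin{equation*}
\mssd(x_0,z_s)\le\mssd(x_0,y_k)+\mssd(y_k,z_s)\le\mssd(x_0,y_k)+(\theta\circ f)(y_k)\le(\theta\circ f)(x_0)<r\comma
\end{equation*}
a bound with a uniform gap to $r$ (the last two steps use the inductive telescoped bound and Condition~{$(A')$}). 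Together with the monotonicity and continuity of $s\mapsto\mssd(x_0,z_s)$ this forces $\sigma^*>\tau_k$, so $y_{k+1}\in B_r(x_0)$. I expect the continuity of the variational interpolation — needed to exclude a jump of $s\mapsto\mssd(x_0,z_s)$ exactly across $\partial B_r(x_0)$ — to be the main technical obstacle; it is available from the energy identity~\eqref{eq:DeGiorgi} and the theory of minimising movements in~\cite{AmbGigSav08}.
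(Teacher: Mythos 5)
You organise the argument exactly as the paper does --- induction plus the triangle inequality reduces everything to the single-step bound $\mssd(y_k,y_{k+1})\le(\theta\circ f)(y_k)-(\theta\circ f)(y_{k+1})$ --- but your proof of the single step has a genuine gap at the integration stage. Your pointwise inequality $D'(s)\le-\tfrac{\diff}{\diff s}(\theta\circ f)(z_s)$ is correct for a.e.\ $s$, but to conclude $D(\tau_k)\le(\theta\circ f)(y_k)-(\theta\circ f)(y_{k+1})$ you must integrate it, and that requires $s\mapsto D(s)=\mssd(y_k,z_s)$ to be \emph{absolutely} continuous, not merely non-decreasing: a monotone function is recovered from its a.e.\ derivative only up to jump and singular parts, so integration yields a bound on $\int_0^{\tau_k}D'(s)\diff s$, which can be strictly smaller than $D(\tau_k)$. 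Jumps do occur: any selection $s\mapsto z_s\in J_s(y_k)$ is discontinuous at every $s$ where $J_s(y_k)$ is multivalued, and nothing in Condition~{$(A')$} or the standing assumptions rules this out. Concretely, on $X=\R$ let $f(x)=1-\abs{x}$ for $\abs{x}\le\tfrac12$, $f(x)=\tfrac12-10(\abs{x}-\tfrac12)$ for $\tfrac12\le\abs{x}\le 0.55$, and $f(x)=0$ for $\abs{x}\ge 0.55$. Then Condition~{$(A')$} holds at $x_0=0$ with $\theta(u)=u$ and $r=1.1$ (the slope is $\ge 1$ on $\set{0<f\le 1}$), minimisers always exist, yet the interpolation from $y_0=x_0$ jumps at $s^*=1-\sqrt{0.6975}\approx 0.165$ from $\abs{z_s}=s$ to $\abs{z_s}=0.55$; one gets $\int_0^{\tau}D'\diff s\approx 0.165$ while $D(\tau)=0.55$, so the a.e.\ differential inequality simply does not see most of the distance. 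Note also that what \cite{AmbGigSav08} provides is local Lipschitz continuity of the Moreau--Yosida \emph{value function} $\tau\mapsto\min_y\{f(y)+\mssd(y_k,y)^2/(2\tau)\}$ --- the jumps of $f(z_s)$ and of $D(s)^2/(2s)$ cancel in the sum --- and \emph{not} continuity of the minimiser selection; so the appeal to \cite{AmbGigSav08} at the end of your argument, and the first-exit confinement argument that leans on it, do not stand either.

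The repair is not routine. At a jump one needs precisely $D^+-D^-\le\theta(f(z^-))-\theta(f(z^+))$ for two minimisers $z^\pm\in J_{s^*}(y_k)$, which is a statement of the same nature as the lemma itself. It can be salvaged when $\theta$ is \emph{concave}, via $\theta(a)-\theta(b)\ge\theta'(a)(a-b)$ combined with \eqref{eq:slope-bound} and {$(A')$} (and a measure-theoretic treatment of the Cantor part), but the lemma assumes no concavity --- your explicit selling point that the argument needs ``no convexity or concavity hypothesis on $\theta$'' is exactly where it breaks. The paper avoids interpolation altogether: it applies Ioffe's Lemma~\ref{lem:Ioffe-lemma} to $g=\theta\circ f$ at $y_N$ (using the chain rule of Lemma~\ref{l: chain rule for slope}) to bound $\mssd\tparen{y_N,\set{f\le f(y_{N+1})}}$ by $(\theta\circ f)(y_N)-(\theta\circ f)(y_{N+1})$, and then uses the minimality of $y_{N+1}$ to convert this into a bound on $\mssd(y_N,y_{N+1})$: for any competitor $\bar x$ with $f(\bar x)\le f(y_{N+1})$ one has $\mssd(y_{N+1},y_N)^2\le 2\tau_N\tparen{f(\bar x)-f(y_{N+1})}+\mssd(\bar x,y_N)^2\le\mssd(\bar x,y_N)^2$. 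This non-infinitesimal argument is insensitive to any discontinuity of minimiser selections, which is exactly why Ioffe's Lemma, rather than a differential inequality along the De~Giorgi interpolation, is the right tool here.
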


	\begin{proof}
		We will prove that \eqref{eq:dist-bound-discrete} holds for all $0\leq i\leq j \leq N$ by induction on $N$, noting that the case $N = 0$ is trivial. 
		
		We thus suppose that the claim is true for some $N \geq 0$, and
		let $\seq{y_k}_{k=0}^{N+1}$ be a proximal point sequence starting at $x_0$.
		By the induction hypothesis and the triangle inequality it suffices to prove that 
			$\mssd(y_N, y_{N+1})
			\leq 
			(\theta\circ f)(y_N) -
			(\theta\circ f)(y_{N+1})$. 
		If $f(y_N) = 0$, we have $y_N = y_{N+1}$ and the claim follows. We thus assume that $f(y_N) > 0$. 
		
		By Condition {$(A')$} there exists 
			$\epsilon > 0$ 
		such that 
			$(1+\epsilon)(\theta\circ f)(x_0) < r$. 
		We will apply Lemma \ref{lem:Ioffe-lemma} 
		to 
		\begin{align*}
			g = \theta \circ f,
				\quad
			x=y_N, 
				\quad
			\delta = (\theta\circ f)(y_{N+1}),
				\quad
			R = (1+\epsilon)(\theta\circ f)(y_N),
				\quad
			v = 1.
	\end{align*}
	We will show that the assumptions of Lemma \ref{lem:Ioffe-lemma} are satisfied.
		\begin{itemize}
		\item Firstly, we claim that 
		$\slop{g}{u} \geq 1$		
		for $u \in 
			B_R(y_N)
				\cap 
			\graf*{\delta < g\le g(y_N)}$.
		Indeed, by
		the triangle inequality and the induction hypothesis,
		\begin{align*}
			\mssd(x_0, u) 
				& \leq  
			\mssd(x_0, y_N) + \mssd(y_N, u) 
		\\&	\leq 
			\Big( (\theta\circ f)(x_0)  	
			- 
			(\theta\circ f)(y_N)  \Big)
			+ 
			(1+\epsilon)(\theta\circ f)(y_N) 
		\\& =
			(\theta\circ f)(x_0) 
				+ 
			\epsilon (\theta\circ f)(y_N) 
		\leq 
			(1+\epsilon)(\theta\circ f)(x_0) 	
			< 
			r.			
		\end{align*}
		This shows that $u \in B_r(x_0)$.
		Moreover, since $\theta$ is strictly increasing,
		$f(u) \leq f(y_N) \leq f(x_0)$.
		Since furthermore $f(u) > 0$, 
		Condition $(A')$ implies that 
		$\theta'(f(u))
		\slop{f}{u} 
		\geq 
		1$.
		In particular, $\slop{f}{u} > 0$, hence $u$ is not an isolated point. 
		Therefore, 
		Lemma \ref{l: chain rule for slope} 
		yields the desired inequality
		\begin{align*}
			\slop{g}{u} 
			= 
			\theta'(f(u))
			\slop{f}{u} 
			\geq 
			1.
		\end{align*}
		\item
		Secondly, we claim that $g(y_N) - \delta < R$.
		Indeed,  
		\begin{align*}
			g(y_N) - \delta 
			= 
			(\theta \circ f)(y_N) 
				- 
			(\theta \circ f)(y_{N+1}) 
			< 
			(1+\epsilon)(\theta \circ f)(y_N)
			= R.
		\end{align*}
		\end{itemize}
		Using that $\theta$ is strictly increasing, 
		we deduce from Lemma \ref{lem:Ioffe-lemma} that 
		\begin{align*}
			\mssd \big(y_N, \big\{
				f \leq f(y_{N+1})
				\big\}\big) 
			\leq (\theta\circ f)(y_N) 
				- (\theta\circ f)(y_{N+1}).	
		\end{align*}
		This means that for any $\kappa > 0$ there exists 
			$\bar x \in X$ 
		such that 
		\begin{align*}
			f(\bar x) \leq f(y_{N+1})
		\tand
			\mssd( \bar x, y_N ) < 
			(\theta\circ f)(y_N) 
				- (\theta\circ f)(y_{N+1}) + \kappa.
		\end{align*}
		On the other hand, since 
			$y_{N+1} \in J_{\tau_N}(y_N)$, 
		we have 
		\begin{align*}
				\mssd^2(y_{N+1}, y_N)
				\leq 
				2\tau_N \big( f(\bar x) - f(y_{N+1}) \big)
			+ 
				\mssd^2(\bar x, y_N).
		\end{align*}
		As $\kappa > 0$ can be chosen arbitrarily small, a combination of these bounds yields
		\begin{align*}
			\mssd \tparen{y_N, y_{N+1}} 
		 \leq 
		 (\theta\circ f)(y_N) 
			- (\theta\circ f)(y_{N+1}),
		\end{align*}
		which completes the induction step
		and the proof of \eqref{eq:dist-bound-discrete}.

		\smallskip
		The final assertion follows from 
		\eqref{eq:dist-bound-discrete}
		since $\theta(f(x_0)) < r$ 
		by Condition  {$(A')$} .
	\end{proof}
	
	\begin{lemma}
		\label{lem:infinite-pp-seq}
		For any $\tau \in (0,\bar \tau)$
		there exists an \emph{infinite} 
		proximal point sequence 
			$\seq{y_k}_{k \geq 0}$ 
		with $y_0 = x_0$.
	\end{lemma}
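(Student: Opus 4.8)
The plan is to construct the sequence one step at a time by induction, using the confinement bound from Lemma~\ref{l:dist-bound-disc} to guarantee that the current iterate always lies in the set $B_r(x_0)\cap \graf*{f\leq f(x_0)}$, which is exactly the region on which the standing assumption provides a global minimiser of the proximal functional. For the base case, I would set $y_0 \eqdef x_0$ and observe that $y_0\in B_r(x_0)$ and $f(y_0)=f(x_0)$, so that $y_0\in B_r(x_0)\cap\graf*{f\leq f(x_0)}$.

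For the inductive step, suppose that $\seq{y_k}_{k=0}^N$ is a proximal point sequence with constant step-size $\tau\in(0,\bar\tau)$. Since $y_{k+1}\in J_\tau(y_k)$ minimises $y\mapsto f(y)+\frac{1}{2\tau}\mssd(y_k,y)^2$, comparing its value at $y_{k+1}$ with the value at $y_k$ gives $f(y_{k+1})+\frac{1}{2\tau}\mssd(y_k,y_{k+1})^2\leq f(y_k)$, whence $f(y_{k+1})\leq f(y_k)$ and, by iteration, $f(y_N)\leq f(x_0)$. Moreover, Lemma~\ref{l:dist-bound-disc} yields $y_N\in B_r(x_0)$. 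Therefore $y_N\in B_r(x_0)\cap\graf*{f\leq f(x_0)}$, which is precisely the set on which the standing assumption guarantees that $y\mapsto f(y)+\frac{1}{2\tau}\mssd(y_N,y)^2$ admits a global minimiser, i.e.\ $J_\tau(y_N)\neq\emp$. Choosing any $y_{N+1}\in J_\tau(y_N)$ then extends $\seq{y_k}_{k=0}^N$ to a proximal point sequence $\seq{y_k}_{k=0}^{N+1}$, again with step-size $\tau$. Iterating this construction produces the desired infinite sequence $\seq{y_k}_{k\geq 0}$ with $y_0=x_0$.

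I expect the only genuine content to be the confinement $y_N\in B_r(x_0)$: the existence of the minimiser defining $y_{N+1}$ is assumed only inside the ball, so a priori the minimisation problem could fail to have a solution once the iterates leave $B_r(x_0)$. This is exactly what Lemma~\ref{l:dist-bound-disc} rules out, so once that lemma is in hand the argument reduces to a routine induction. In particular, the degenerate case $f(y_N)=0$ causes no difficulty, since one still has $f(y_N)\leq f(x_0)$ and the (unique) minimiser is simply $y_{N+1}=y_N$, so the construction continues without interruption.
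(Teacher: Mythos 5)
Your proof is correct and follows essentially the same route as the paper: an iterative construction in which Lemma~\ref{l:dist-bound-disc} supplies the confinement $y_N\in B_r(x_0)$ and monotonicity of $f$ along the sequence supplies $f(y_N)\leq f(x_0)$, so the standing assumption yields a minimiser at every step. The only (immaterial) difference is that you obtain $f(y_{k+1})\leq f(y_k)$ from the one-step comparison of the proximal functional at $y_{k+1}$ and at $y_k$, whereas the paper cites De Giorgi's identity \eqref{eq:DeGiorgi}.
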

\begin{proof}
	It suffices to iteratively construct the sequence $\seq{y_k}_k$ by letting $y_0 \eqdef x_0$ and $y_{k+1}$ be a minimiser in \eqref{eq:MMS}
	with $y_k$ in place of $x$, 
	noting that $y_k\in B_r(x_0)$ by Lemma \ref{l:dist-bound-disc} and $f(y_k)\leq f(x_0)$ by \eqref{eq:DeGiorgi}.
\end{proof}
	
	\begin{lemma}\label{l:limits-of-prox-points}
		Let $(y_k)_{k \geq 0}$ 
		be a proximal point sequence
		and suppose that  
		$y_\infty \coloneqq \lim_{k\to \infty} y_k$
		exists. 
		Then
		\begin{align*}
			f(y_\infty) = \lim_{k\to \infty} f(y_k)
			\tand 
			\lim_{k\to \infty} \slop{f}{y_k} = 0.
		\end{align*}
	\end{lemma}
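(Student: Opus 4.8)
The plan is to first show that $L \coloneqq \lim_{k\to\infty} f(y_k)$ exists, then to identify it with $f(y_\infty)$ by combining lower semicontinuity with the variational characterisation of the proximal step, and finally to deduce the slope estimate from the one-step bound \eqref{eq:slope-bound}.

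First I would record that the sequence stays in the region where Lemma~\ref{l:length of backward euler step} is applicable: by Lemma~\ref{l:dist-bound-disc} we have $y_k \in B_r(x_0)$ for all $k$, and \eqref{eq:f-monotonicity} gives $f(y_k) \le f(x_0)$. Testing the minimality of $y_{k+1}$ in \eqref{eq:MMS} (with $x = y_k$) against the competitor $y = y_k$ yields the one-step bound
\[
    f(y_{k+1}) + \frac{1}{2\tau}\mssd(y_k, y_{k+1})^2 \le f(y_k),
\]
so that $(f(y_k))_k$ is non-increasing; being bounded below by $0$, it converges to some $L \ge 0$, and in particular $f(y_k) - f(y_{k+1}) \to 0$, whence $\mssd(y_k, y_{k+1}) \to 0$.

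Next I would prove $f(y_\infty) = L$. Lower semicontinuity of $f$ together with $y_k \to y_\infty$ gives $f(y_\infty) \le \liminf_k f(y_k) = L$. For the reverse inequality I would again use that $y_{k+1}$ is a global minimiser of \eqref{eq:MMS} with $x = y_k$, this time testing against the competitor $y = y_\infty$:
\[
    f(y_{k+1}) + \frac{1}{2\tau}\mssd(y_k, y_{k+1})^2 \le f(y_\infty) + \frac{1}{2\tau}\mssd(y_k, y_\infty)^2.
\]
Dropping the non-negative term on the left and letting $k \to \infty$, using $\mssd(y_k, y_\infty) \to 0$ and $f(y_{k+1}) \to L$, I obtain $L \le f(y_\infty)$. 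Hence $f(y_\infty) = L = \lim_k f(y_k)$.

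Finally, for the slope I would combine the one-step distance bound with \eqref{eq:slope-bound}: since $y_{k+1} \in J_\tau(y_k)$, \eqref{eq:slope-bound} gives $\tau\,\slop{f}{y_{k+1}} \le \mssd(y_k, y_{k+1})$, and the right-hand side tends to $0$ by the previous step; therefore $\slop{f}{y_k} \to 0$. The only genuinely non-routine point is the lower bound $f(y_\infty) \ge L$: since $f$ is merely lower semicontinuous, continuity of $f$ along the sequence is unavailable, and this inequality must instead be extracted from the minimality defining the proximal step — which is the step I would flag as the crux of the argument.
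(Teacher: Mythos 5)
Your proof is correct and follows essentially the same route as the paper's: lower semicontinuity gives $f(y_\infty) \leq \liminf_k f(y_k)$, testing the minimality of $y_{k+1}$ in \eqref{eq:MMS} against the competitor $y_\infty$ gives the reverse inequality, and \eqref{eq:slope-bound} combined with $\mssd(y_k,y_{k+1}) \to 0$ handles the slope. The only blemish is a mis-citation: $f(y_k) \leq f(x_0)$ follows from \eqref{eq:DeGiorgi} (or from your own one-step competitor bound), not from \eqref{eq:f-monotonicity}, which compares different step-sizes — but this does not affect the argument.
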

	\begin{proof}
		The lower semicontinuity of $f$ yields 
			$f(y_\infty) \leq \liminf_{k \to \infty} f(y_k)$.
		On the other hand, since $y_k \in J_\tau (y_{k-1})$ we have 
		\[
		f(y_k) 
			\leq 
			f(y_\infty) 
				+ 
			\frac{1}{2\tau} \mssd(y_{k-1}, y_\infty)^2,
		\]
		hence $\limsup_{k \to \infty} f(y_k) \leq f(y_\infty)$.
		Combining these inequalities, we obtain the first identity.

		As for the second identity,
		note that
		\[0 \leq \slop{f}{y_k} \leq \frac{\mssd(y_{k-1}, y_k)}{\tau}\]
		for $k\geq 1$ by \eqref{eq:slope-bound}. 
		The conclusion follows by letting $k \to \infty$.
	\end{proof}
	
	\begin{proof}[Proof of Theorem \ref{t:conv-disc-new}]
		Fix $\tau \in (0,\bar \tau)$.
		The existence of an infinite 
		proximal point sequence 
			$\seq{y_k}_{k \geq 0}$ 
		with $y_0 = x_0$ and step-size $\tau$
		was proved in Lemma \ref{lem:infinite-pp-seq}.

		Statement \ref{it: confinement discrete scheme} 
		was proved in Lemma \ref{l:dist-bound-disc}.
		
		The distance bound in 
		\ref{it: dist bound discrete}
		was proved for 
		$0\leq i\leq j < \infty$
		in 
		Lemma \ref{l:dist-bound-disc} as well.
		
		To prove \ref{it: convergence discrete scheme}, 
		note that $(\theta(f(y_k)))_k$ is a Cauchy sequence, 
		as it is non-negative and non-increasing.
		Therefore, 
		\ref{it: dist bound discrete} implies the Cauchy property of $\seq{y_k}_k$ and hence the existence of $y_\infty \coloneqq \lim_{k\to \infty} y_k$.
		Using 
			\eqref{eq:dist-bound-discrete}
		and Condition {$(A')$} we infer for $0 \leq i < \infty$
		that 
		\begin{align*}
			\mssd(y_i, y_\infty)
			\leq \liminf_{j \to \infty}
			\mssd(y_i, y_j) 
			\leq 
				(\theta \circ f)(y_i)
			< r.
		\end{align*}
		This show that $y_\infty\in {B_r(x_0)}$
		and the distance bound in \ref{it: dist bound discrete} for $j = \infty$ follows as well.
		To show that $f(y_\infty) = 0$, 
		we may assume that $f(y_k)>0$ for all $k$, since 
		otherwise there is nothing to prove.
		Since $\slop{f}{y_k} \to 0$ as $k \to \infty$ by Lemma \ref{l:limits-of-prox-points} and
		\begin{align*}
			\theta'(f(y_k)) \cdot \slop{f}{y_k} \geq 1
		\end{align*}
		by Condition {$(A')$},
		we infer that 
		$\theta'(f(y_k)) \to \infty$.
		As $\theta'$ is continuous on $(0,\infty)$
		and the sequence $(f(y_k))_k$ is non-increasing, 
		it follows that 
		$f(y_k) \to 0$, hence $f(y_\infty) = 0$ by lower semicontinuity of $f$.
	\end{proof}

	For specific choices of the parameter function $\theta$ it is possible to obtain more explicit estimates on the decay of $f(y_k)$ and $\mssd(y_k,y_\infty)$ as $k \to \infty$. We adapt and refine some arguments from \cite{AttBol2009}, where similar results are proved.

	\begin{lemma}
		\label{lem:concave-theta}
		Let $\seq{y_k}_{k=0}^\infty$ be a proximal point sequence 
		with step-size $\tau \in (0, \bar \tau)$ 
		starting at $x_0$.
		If the parameter function $\theta$ is concave, 
		we have, for all $k \geq 0$,
		\begin{align}
			\label{eq:one-step-decay-concave}
			f(y_k) - f(y_{k+1}) 
					\geq
					\frac{\tau }
					{(\theta'\circ f) (y_{k+1})^2 } \fstop
		\end{align}
	\end{lemma}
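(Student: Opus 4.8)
The plan is to chain together three facts we already have---the slope bound \eqref{eq:slope-bound} for a proximal step, the second inequality in Condition~{$(A')$}, and the distance bound \eqref{eq:dist-bound-discrete} of Lemma~\ref{l:dist-bound-disc}---and then to use concavity of $\theta$ in the very last step to convert a decrease of $\theta\circ f$ into a decrease of $f$ without losing any constant. First I would dispose of the degenerate case $f(y_{k+1})=0$: under the natural convention $1/(\theta'\circ f)(y_{k+1})\coloneqq 0$ in this case (which is what happens for the concave examples of interest, e.g.\ $\theta(u)=2c\sqrt u$, where $\theta'(0^+)=+\infty$), the right-hand side of \eqref{eq:one-step-decay-concave} vanishes and the inequality is trivial. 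So from now on I assume $f(y_{k+1})>0$.

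By Lemma~\ref{l:dist-bound-disc} we have $y_{k+1}\in B_r(x_0)$, and by \eqref{eq:DeGiorgi} the map $k\mapsto f(y_k)$ is non-increasing, so $0<f(y_{k+1})\leq f(y_k)\leq f(x_0)$. Hence $y_{k+1}$ lies in the set on which the second inequality in Condition~{$(A')$} is imposed, giving $(\theta'\circ f)(y_{k+1})\cdot\slop{f}{y_{k+1}}\geq 1$. Since $y_{k+1}\in J_\tau(y_k)$, the slope bound \eqref{eq:slope-bound} reads $\mssd(y_k,y_{k+1})\geq \tau\,\slop{f}{y_{k+1}}$, and combining the two estimates yields
\[
\mssd(y_k,y_{k+1})\geq \frac{\tau}{(\theta'\circ f)(y_{k+1})}.
\]
On the other hand, the distance bound \eqref{eq:dist-bound-discrete} of Lemma~\ref{l:dist-bound-disc} gives $(\theta\circ f)(y_k)-(\theta\circ f)(y_{k+1})\geq \mssd(y_k,y_{k+1})$, so that
\[
(\theta\circ f)(y_k)-(\theta\circ f)(y_{k+1})\geq \frac{\tau}{(\theta'\circ f)(y_{k+1})}.
\]

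Finally I would invoke concavity. Writing $a\coloneqq f(y_k)\geq b\coloneqq f(y_{k+1})>0$, concavity of $\theta$ (equivalently, that $\theta'$ is non-increasing) gives the tangent-line bound $\theta(a)-\theta(b)\leq \theta'(b)\,(a-b)$; dividing by $\theta'(b)>0$ yields $f(y_k)-f(y_{k+1})\geq \big((\theta\circ f)(y_k)-(\theta\circ f)(y_{k+1})\big)\big/(\theta'\circ f)(y_{k+1})$. Substituting the last display gives exactly \eqref{eq:one-step-decay-concave}. The only step that genuinely needs care---and the crux of the argument---is this final use of concavity: the more naive route, relying solely on the minimality inequality $f(y_k)-f(y_{k+1})\geq \tfrac{1}{2\tau}\mssd(y_k,y_{k+1})^2$ together with the lower bound on $\mssd(y_k,y_{k+1})$ above, would only produce the weaker estimate $\tau\big/\big(2(\theta'\circ f)(y_{k+1})^2\big)$. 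Concavity is precisely what removes the spurious factor $\tfrac12$ and delivers the sharp one-step decay.
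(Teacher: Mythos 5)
Your proof is correct, and it is genuinely different from the one in the paper. The paper's proof runs through De Giorgi's interpolation identity \eqref{eq:DeGiorgi}: the dissipation $f(y_k)-f(y_{k+1})$ is written as the endpoint term $\mssd(y_k,y_{k+1})^2/(2\tau)$ plus the integral $\int_0^\tau \mssd(y_k,z_s)^2/(2s^2)\,\diff s$ over the intermediate resolvents $z_s\in J_s(y_k)$; then \eqref{eq:slope-bound} and Condition~$(A')$ are applied both at $y_{k+1}$ and at every $z_s$, and concavity enters only through the monotonicity of $\theta'$ (since $f(z_s)\geq f(y_{k+1})$ by \eqref{eq:f-monotonicity}, one has $(\theta'\circ f)(z_s)\leq(\theta'\circ f)(y_{k+1})$); each of the two resulting terms contributes $\tau/\bigl(2\,(\theta'\circ f)(y_{k+1})^2\bigr)$, and summing them gives the sharp constant. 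You never touch the interpolants: you combine \eqref{eq:slope-bound} and $(A')$ at the single point $y_{k+1}$ to get $\mssd(y_k,y_{k+1})\geq \tau/(\theta'\circ f)(y_{k+1})$, upgrade this to a decrement of $\theta\circ f$ via the Ioffe-based distance bound \eqref{eq:dist-bound-discrete}, and then use concavity through the tangent-line inequality $\theta(a)-\theta(b)\leq\theta'(b)(a-b)$ to convert it into a decrement of $f$ with no spurious factor $\tfrac12$. All ingredients are legitimately available: Lemma \ref{l:dist-bound-disc} precedes this lemma and its proof does not rely on it, so there is no circularity, and $y_k,y_{k+1}\in B_r(x_0)$ with $0<f(y_{k+1})\leq f(y_k)\leq f(x_0)$, so $(A')$ and \eqref{eq:slope-bound} do apply where you invoke them. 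The trade-off: the paper stays inside the resolvent calculus of Lemma \ref{l:length of backward euler step}, whereas you lean on the heavier Lemma \ref{l:dist-bound-disc} (hence on Ioffe's Lemma \ref{lem:Ioffe-lemma}) but obtain a shorter argument given what is already established, and your version isolates exactly where concavity does its work --- both routes evade the factor $\tfrac12$ that the naive minimality estimate produces, which is precisely the point of the paper's remark following the lemma. A further small merit of your write-up: you make explicit the convention needed when $f(y_{k+1})=0$, where $\theta'$ is undefined; the paper's proof silently assumes $f(y_{k+1})>0$, and some such convention is indeed necessary, since reading the right-hand side by continuity as $\tau/\theta'(0^+)^2$ would make the claim false (take $\theta(u)=cu$ and $f(x)=|x|/c$ on $\R$, with $\tau>c\,x_0$, so that one proximal step lands exactly on the minimiser).
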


	\begin{proof}
		Fix $k \geq 0$
		and take $z_s \in J_s (y_k)$ for $s \in (0, \tau)$.
		Using De Giorgi's formula \eqref{eq:DeGiorgi},
		the inequality \eqref{eq:slope-bound},
		and Condition {$(A')$}, 
 		we obtain
		\begin{align*}
			f(y_k) - f(y_{k+1}) 
				= 
			\frac{\mssd(y_k, y_{k+1})^2}{2 \tau}
				+ 
			\int_0^\tau \frac{\mssd(y_k, z_s)^2}{2s^2} \diff s
			&
				\geq 
			\frac{\tau}{2} \slop{f}{y_{k+1}}^2
				+ 
			\frac12 \int_0^\tau {\slop{f}{z_s}^2} \diff s
			\\ & \geq 
			\frac{\tau }{2 (\theta'\circ f) (y_{k+1})^2}
				+ 
			\int_0^\tau \frac{1}{2 (\theta'\circ f) (z_s)^2} \diff s
			\fstop
		\end{align*}
		Note that Condition {$(A')$} can be applied to $z_s$ since $f(z_s) \leq f(y_k) \leq f(x_0)$ and
		Lemma \ref{l:dist-bound-disc} implies that 
		$z_s \in B_r(x_0)$.
		Since $\theta$ is concave and $f(z_s) \geq f(y_{k+1})$ by \eqref{eq:f-monotonicity}, the result follows.
	\end{proof}

	Note that a weaker decay estimate with an additional factor of $1/2$ in the right-hand side of \eqref{eq:one-step-decay-concave}
	can be obtained without using De Giorgi's identity \eqref{eq:DeGiorgi}. 
	
	\begin{corollary}\label{cor:conv-rate-power-dis}
		Suppose that the parameter function $\theta$ is given by
		$\theta(u)
			=
		\frac{c}{\gamma} u^\gamma$
		for some~$c>0$ and~$\gamma \in (0,1]$. 
		Let $\seq{y_k}_{k\geq 0}$ be a proximal point sequence starting at $x_0$
		with step-size $\tau \in (0, \bar \tau)$,
		and set $y_\infty := \lim_{k\to \infty} y_k$.
		The following assertions hold:
		\begin{enumerate}[$(i)$]
			\item \label{it:t:disc-rate-finite} If $\gamma = 1$, then $y_k = y_\infty$ and $f(y_k) = 0$ for all  
			$k \geq \lceil cr/\tau \rceil$.
			\item \label{it:t:disc-rate-fast} If $\frac{1}{2} < \gamma <1$, then, for $k \geq 0$,
			\begin{align*}				
				f(y_k) & 
					\leq 
					\tond*{1+\frac{\tau}{c^2}f(x_0)^{ 1-2\gamma }}^{-k}f(x_0) \comma
				\\
				\mssd(y_k,y_\infty) 
					& \leq 
					\frac{c}{\gamma} f(y_k)^\gamma 
					= 
					\mathcal{O} \tond*{\tond*{1+\frac{\tau}{c^2}f(x_0)^{ 1-2\gamma \EEE }}^{-k\gamma}} \comma		
			\end{align*}
			and, for $k \geq k_0
			\coloneqq
			\log_{1+\tilde\alpha}
				\tond*{ 2\tilde\alpha^{-{1}/{(2\gamma-1)}} }
			$
			with
			$\tilde \alpha \coloneqq \frac{\tau}{c^2} f(x_0)^{1-2\gamma}$,
			\begin{align*}				
				f(y_k) & \leq 
				\Big(\frac{\tau}{c^2}\Big)^\frac{1}{2\gamma-1} 
				2^{-(2-2\gamma)^{-(k-k_0)}} \comma
				\\
				\mssd(y_k,y_\infty) 
					& \leq 
					\frac{c}{\gamma} f(y_k)^\gamma 
					= 
				\mathcal{O}\tond*{ {2^{-\gamma \tond*{2-2\gamma}^{-(k-k_0)}}}}\fstop
			\end{align*}				
			
			\item \label{it:t:disc-rate-exponential} If $\gamma = \frac{1}{2}$ then, for $k \geq 0$,
			\begin{align*}				
				f(y_k) & \leq \tond*{1+\frac{\tau}{c^2}}^{-k}f(x_0) \comma
				\\
				\mssd(y_k,y_\infty) 
				& \leq 2c \sqrt{f (y_k) }
				\leq 2c \tond*{1+\frac{\tau}{c^2}}^{-k/2}
				\sqrt{f(x_0)} \fstop
			\end{align*}
			
			\item \label{it:t:disc-rate-polynomial} If $0<\gamma<\frac{1}{2}$ 
			then, for $k \geq 0$,
			\begin{align*}				
				f(y_k) 
					& \leq 
				 		\Big( f(x_0)^{-(1-2\gamma)} + C_1 k \Big) 
								^{-\frac{1}{1-2\gamma}} 
					= 
				\mathcal{O} \big(
						k^{-\frac{1}{1-2\gamma}}\big),
				\\
				\mssd(y_k,y_\infty) 
				& \leq 
					\frac{c}{\gamma} f(y_k)^\gamma 
				 	= 
				 \mathcal{O} \big({k^{-\frac{\gamma}{1-2\gamma}}}\big),
			\end{align*}		
			where
			$ 
			C_1 = \sup_{R > 1}\min 
			\big\{
				\tfrac{\tau(1-2\gamma)}{c^2R}
				, 
				\big({R^\frac{1-2\gamma}{2-2\gamma}-1}\big) f(x_0)^{2\gamma-1}
				\big\}$.
		\end{enumerate}
	\end{corollary}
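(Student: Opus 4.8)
The plan is to reduce all four regimes to a single scalar recursion for the energies $a_k \coloneqq f(y_k)$ and then to analyse it case by case. Since $\theta(u) = \frac{c}{\gamma}u^\gamma$ is concave for $\gamma \in (0,1]$, Lemma \ref{lem:concave-theta} applies; with $\theta'(u) = c\,u^{\gamma - 1}$ the one-step decay \eqref{eq:one-step-decay-concave} reads, whenever $f(y_{k+1}) > 0$,
\begin{align}
	\label{eq:plan-recursion}
	a_k - a_{k+1} \geq \frac{\tau}{c^2}\,a_{k+1}^{\,2-2\gamma}\fstop
\end{align}
Moreover, all distance estimates are immediate from the energy estimates: the distance bound of Theorem \ref{t:conv-disc-new} gives $\mssd(y_k,y_\infty) \leq (\theta\circ f)(y_k) = \frac{c}{\gamma}a_k^\gamma$, into which any bound on $a_k$ may be substituted. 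Thus it suffices to extract the decay of $(a_k)_k$ from \eqref{eq:plan-recursion} in each regime.

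The two boundary cases are direct. For $\gamma = 1$ the exponent $2-2\gamma$ vanishes, so \eqref{eq:plan-recursion} reads $a_k - a_{k+1} \geq \tau/c^2$: the energy drops by a fixed amount each step until it reaches $0$. Since Condition $(A')$ forces $c\,f(x_0) < r$, this happens after at most $c^2 f(x_0)/\tau < cr/\tau$ steps, and a global minimiser is a fixed point of the proximal map, which yields \ref{it:t:disc-rate-finite}. For $\gamma = \tfrac12$ the exponent equals $1$, so \eqref{eq:plan-recursion} gives $a_k \geq (1 + \tau/c^2)\,a_{k+1}$, i.e.\ geometric decay with ratio $(1+\tau/c^2)^{-1}$; substituting into $\tfrac{c}{\gamma}a_k^\gamma = 2c\sqrt{a_k}$ completes \ref{it:t:disc-rate-exponential}.

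For $\tfrac12 < \gamma < 1$ in \ref{it:t:disc-rate-fast} I would first derive the slower geometric bound by writing $a_{k+1}^{2-2\gamma} = a_{k+1}\,a_{k+1}^{1-2\gamma} \geq f(x_0)^{1-2\gamma}\,a_{k+1}$, valid because $1-2\gamma < 0$ and $a_{k+1} \leq f(x_0)$; then \eqref{eq:plan-recursion} becomes $a_k \geq (1+\tilde\alpha)a_{k+1}$ with $\tilde\alpha = \tfrac{\tau}{c^2}f(x_0)^{1-2\gamma}$. The faster, doubly-exponential bound is the heart of the matter. I would set the threshold $\rho \coloneqq (\tau/c^2)^{1/(2\gamma-1)}$, chosen so that $\tfrac{\tau}{c^2}\rho^{2-2\gamma} = \rho$; the geometric bound evaluated at the index $k_0$ gives $a_{k_0} \leq \rho/2$. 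I would then prove $a_k \leq \rho\,2^{-(2-2\gamma)^{-(k-k_0)}}$ by induction on $k \geq k_0$: writing $\phi(x) \coloneqq x + \tfrac{\tau}{c^2}x^{2-2\gamma}$, which is increasing, \eqref{eq:plan-recursion} reads $\phi(a_{k+1}) \leq a_k$, while the fixed-point identity yields, with $u = (2-2\gamma)^{-(k-k_0)}$,
\begin{align*}
	\phi\tond*{\rho\,2^{-u/(2-2\gamma)}}
	= \rho\,2^{-u/(2-2\gamma)} + \rho\,2^{-u}
	\geq \rho\,2^{-u}\comma
\end{align*}
so monotonicity of $\phi$ propagates the bound from step $k$ to step $k+1$.

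Finally, for $0 < \gamma < \tfrac12$ in \ref{it:t:disc-rate-polynomial} the exponent $\beta \coloneqq 2-2\gamma$ exceeds $1$ and the decay is polynomial; here I would adapt the Attouch--Bolte argument and track $a_k^{-p}$ with $p \coloneqq 1-2\gamma = \beta - 1 > 0$. For each fixed $R > 1$ I would split the step into two alternatives. If $a_{k+1} \geq R^{-1/\beta}a_k$, then $a_{k+1}^\beta \geq R^{-1}a_k^\beta$ and, by convexity of $x\mapsto x^{-p}$ together with $\beta - p - 1 = 0$, one gets $a_{k+1}^{-p} - a_k^{-p} \geq \tfrac{(1-2\gamma)\tau}{c^2 R}$; otherwise $a_{k+1} < R^{-1/\beta}a_k$ yields directly $a_{k+1}^{-p} - a_k^{-p} \geq (R^{p/\beta}-1)a_k^{-p} \geq (R^{p/\beta}-1)f(x_0)^{-p}$. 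In either alternative the increment of $a_k^{-p}$ is bounded below by the minimum of these two quantities; since this holds for every $R > 1$, taking the supremum produces exactly the constant $C_1$, and telescoping gives $a_k^{-p} \geq f(x_0)^{-p} + C_1 k$, hence the stated rate. The main obstacle I anticipate is the doubly-exponential step in \ref{it:t:disc-rate-fast}: it hinges on guessing the ansatz $\rho\,2^{-(2-2\gamma)^{-(k-k_0)}}$ and the scale $\rho$, after which the induction collapses to the single identity $\tfrac{\tau}{c^2}\rho^{2-2\gamma} = \rho$; the $R$-optimisation producing $C_1$ in \ref{it:t:disc-rate-polynomial} is the second delicate point, while everything else is bookkeeping.
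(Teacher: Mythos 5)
Your proposal is correct and takes essentially the same route as the paper: the paper also reduces everything to the one-step decay of Lemma~\ref{lem:concave-theta}, i.e.\ the recursion $f_{k-1}-f_k \ge \tfrac{\tau}{c^2}\,f_k^{2-2\gamma}$, gets all distance estimates from Theorem~\ref{t:conv-disc-new}\ref{it: dist bound discrete}, and then analyses the recursion case by case (this analysis is delegated to the appendix Lemma~\ref{l:rec-ineq-app}, whose content --- the geometric bound via $f_k^{1-2\gamma}\ge f(x_0)^{1-2\gamma}$, the doubly-exponential rate seeded by the geometric bound at $k_0$, and the Attouch--Bolte two-case optimisation over $R>1$ producing exactly $C_1$ --- is what you reprove inline). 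The only cosmetic deviations are in part $(i)$, where the paper counts steps using the slope bound \eqref{eq:slope-bound} together with the telescoped distance estimate of Lemma~\ref{l:dist-bound-disc} instead of your constant energy drop (your threshold $c^2 f(x_0)/\tau$ is in fact slightly sharper, and your explicit fixed-point remark for global minimisers is left implicit in the paper), and in the doubly-exponential step, where your monotone-map induction with $\phi(x)=x+\tfrac{\tau}{c^2}x^{2-2\gamma}$ replaces the paper's normalisation $\tilde f_k = \alpha^{-1/(1-\delta)}f_k$ and iteration of $\tilde f_k \le \tilde f_{k-1}^{1/\delta}$; note that your base case ``at index $k_0$'' treats $k_0$ as an integer, an imprecision the paper's own proof shares.
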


	\begin{proof}
		$(i)$: \
		Suppose that $f(y_K) > 0$ for some $K \geq 0$. 
		Condition $(A')$ yields
			$\slop{f}{y_k} \geq \frac{1}{c}$ 
		for all $0 \leq k \leq K$, 
		hence $\mssd( y_{k}, y_{k+1} ) \geq \frac{\tau}{c}$
		for all $0 \leq k \leq K-1$ by \eqref{eq:slope-bound}. 
		Using
		Condition $(A')$ and Lemma \ref{l:dist-bound-disc} we infer that
		\begin{align*}
			r 
				>
			c(f(x_0) - f(y_K))
			= c \sum_{k=0}^{K-1} 
					\Big( f(y_k) - f(y_{k+1}) \Big)
			\geq 
			\sum_{k=0}^{K-1} 
					d(y_k,y_{k+1})
			\geq \frac{K \tau}{c}, 
		\end{align*}
		hence $K < cr/\tau$. It follows that $f(y_k) = 0$ 
		and therefore $y_k = y_\infty$  for all
		$k \geq \lceil cr/\tau \rceil$.

		$(ii) - (iv)$: \
		Write $f_k \coloneqq f(y_k)$.
		From $\eqref{eq:one-step-decay-concave}$ we deduce the recursive inequality 
		\[
			f_{k-1} - f_k \geq \frac{\tau}{c^2} f_k^{2-2\gamma} \fstop
		\]
		This inequality yields the decay of $f_k$ using Lemma \ref{l:rec-ineq-app} with 
		$\alpha = {\tau}/{c^2}$ and $\delta = 2-2\gamma$. 
		The decay estimates for 
			$d_k \coloneqq  \mssd(y_k,y_\infty)$ 
		follow 
		from the bounds for $f_k$ combined with 
		Theorem \ref{t:conv-disc-new}\ref{it: dist bound discrete}.
	\end{proof}

	\begin{remark}
		In general --- unlike in the continuous setting of Corollary \ref{cor:conv-rate-gf-special} --- the  discrete scheme does not converge in a finite number of steps when $\frac{1}{2}<\gamma<1$. 
		For suitable $f\in C^1(\R^n)$, this is easy to deduce from the equivalent formulation in \eqref{eq:back-euler}. 
		However, 
		the corollary above shows that the rate of convergence is (asymptotically) faster than exponential. 
	\end{remark}
	
	\begin{ese}[Non-uniqueness revisited]
		\label{ex:non-uniqueness-MMS}
		Let us consider again the function 
		$f$ 
		defined in \eqref{eq:f-non-unique} (see Fig.~\ref{fig:NonUniqueness:1})
		and let $\tau > 0$. 
		For any $x_0 \neq 0$, the resolvent $J_\tau(x_0)$ is single-valued.
		However, if $x_0 = 0$, the resolvent $J_\tau(x_0)$ contains two elements, say $x_\tau$ and $- x_\tau$. 
		Consequently, there are two distinct proximal point sequences starting at $x_0 = 0$; once $x_\tau^1 \in J_\tau(x_0)$ is selected, the rest of the sequence is determined. 
		Corollary~\ref{cor-disc-square-root} implies the exponential convergence for both  of these sequences.
	\end{ese}

	\appendix
	
	\section{A nonsmooth chain rule}
	
	In practice, it can be difficult to compute the slope of a non differentiable function, because tools such as the chain rule are missing.
	We have however the following basic substitute Lemma.
	\begin{lemma}\label{l: chain rule for slope}
		Let $f\colon X \to (-\infty, \infty]$ be proper and lower semicontinuous and let
		$g\colon \R\to \R$ be lower semicontinuous and non-decreasing.
		Then $g\circ f\colon X\to (-\infty, \infty]$ is proper and lower semicontinuous.
		Furthermore, if $x\in \dom{f}$ is not isolated and is such that there exists the left derivative~$\partial_- g\tond*{f(x)} \geq 0$ of $g$ at $f(x)$, then
		\begin{equation}\label{e: chain rule for the slope}
			\slo{(g\circ f)}(x) = \partial_- g \tond*{f(x)} \cdot \slo{f}(x) \fstop
		\end{equation}
	\end{lemma}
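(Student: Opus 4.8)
The plan is to prove the chain rule for the metric slope in two stages: first establish that $g \circ f$ is proper and lower semicontinuous, and then verify the slope identity \eqref{e: chain rule for the slope} by analysing the relevant $\limsup$ directly.

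For the lower semicontinuity, since $g$ is non-decreasing and lower semicontinuous, I would argue that preimages of the form $(g\circ f)^{-1}((-\infty, a])$ are closed. Concretely, for a sequence $x_n \to x$ with $(g\circ f)(x_n) \le a$, I would like to conclude $(g\circ f)(x) \le a$. The monotonicity of $g$ lets one transfer the sublevel structure of $g\circ f$ to sublevel structure of $f$, and then lower semicontinuity of both $f$ and $g$ closes the argument; properness follows since $\dom{f}\neq\emp$ and $g$ is real-valued (hence finite) on $\R$.

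For the slope identity, the key observation is that for $y$ near $x$ with $f(y) < f(x)$, the difference quotient factors as
\begin{align*}
	\frac{\big[(g\circ f)(y) - (g\circ f)(x)\big]_-}{\mssd(y,x)}
	=
	\frac{\big[g(f(y)) - g(f(x))\big]_-}{\big[f(y)-f(x)\big]_-}
	\cdot
	\frac{\big[f(y) - f(x)\big]_-}{\mssd(y,x)}\fstop
\end{align*}
Because $g$ is non-decreasing, the first factor involves only values $f(y) \le f(x)$, so it is governed by the \emph{left} derivative $\partial_- g(f(x))$; as $y \to x$ one has $f(y) \to f(x)$ from below along the relevant sequences (those realising the slope have $f(y)<f(x)$ for $f(y)$ close to $f(x)$), and the first factor converges to $\partial_- g(f(x))$. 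Taking $\limsup$ of the product and using that the first factor has a genuine limit, the $\limsup$ distributes over the product, yielding $\partial_- g(f(x)) \cdot \slo{f}(x)$. I would treat separately the degenerate cases $\slo{f}(x) = 0$ and $\slo{f}(x) = +\infty$, and use the hypothesis $\partial_- g(f(x)) \ge 0$ to guarantee the product is well-defined and that $g$ is locally increasing so the negative parts behave compatibly.

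The main obstacle I anticipate is the careful handling of the factorisation near points where $f(y) = f(x)$ (making the first ratio indeterminate) and the interchange of $\limsup$ with the product. The clean way around this is to restrict attention to sequences $y_n \to x$ that realise $\slo{f}(x)$ and along which $f(y_n) < f(x)$; if no such sequence exists then $\slo{f}(x)=0$ and one checks both sides vanish. One must also confirm that the difference quotient for $g\circ f$ cannot be realised along a sequence giving a strictly larger value than the product, which again follows from monotonicity of $g$ forcing $[g(f(y))-g(f(x))]_- \le \partial_- g(f(x))\,[f(y)-f(x)]_- + o([f(y)-f(x)]_-)$ as $f(y)\uparrow f(x)$. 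Establishing this one-sided comparison uniformly is the delicate point, and I would isolate it as the crux of the argument.
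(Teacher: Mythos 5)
Your proposal is correct and takes essentially the same route as the paper's proof: the same factorisation of the difference quotient, the same use of monotonicity of $g$ to discard sequences with $f(y_n)\geq f(x)$, and the same use of lower semicontinuity of $f$ to force $f(y_n)\to f(x)$ along the remaining sequences, so that the first factor converges to $\partial_- g(f(x))$ and the $\limsup$ passes through the product in both inequalities. The ``delicate'' one-sided comparison you isolate as the crux is in fact immediate from the definition of the left derivative, which is exactly how the paper concludes.
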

	\begin{proof}
		The lower semicontinuity of $g\circ f$ is well known, so we only prove \eqref{e: chain rule for the slope}.
		Set
		\[
		S \coloneqq \set{\tond*{y_n}_n \subset X : y_n \to x\comma y_n \neq x}.
		\]
		Since $x$ is not isolated, $S\neq \varnothing$.
		For a function $h\colon X \to (-\infty, \infty]$ we have
		\begin{equation}\label{e: sequential characterization of limsup}
			\limsup_{y\to x} h (y) = \sup \set{ \limsup_{n\to \infty} \:h(y_n) : \tond*{y_n}_n \in S} = \max \set{ \limsup_{n\to \infty} \:h(y_n) : \tond*{y_n}_n \in S}.
		\end{equation}
		We first prove that
		\begin{equation}\label{eq:ChainRule:+}
			\slo{(g\circ f)}(x) \leq \partial_- g \tond*{f(x)} \cdot \slo{f}(x) \fstop
		\end{equation}
		Let $\seq{y_n}_n \in S$.
		We need to show that
		\[
		\limsup_{n\to \infty} \frac{\quadr*{(g\circ f)(y_n)-(g \circ f)(x)}_- }{\mssd(y_n,x)} \leq \partial_- g \tond*{f(x)} \cdot \slo{f}(x) \fstop
		\]
		Observe that if $f(y_n) \geq f(x)$ then, since $g$ is non decreasing,
		\[
		\frac{\quadr*{(g\circ f)(y_n)-(g \circ f)(x)}_- }{\mssd(y_n,x)} = 0\fstop
		\]
		Therefore without loss of generality (by changing sequence and/or restricting to a subsequence) we can assume that $f(y_n) <f(x)$.
		Then, passing to the limit superior as~$n\to\infty$ we see that~$\limsup_n f(y_n)\leq f(x)$.
		By lower semicontinuity of~$f$ we have as well that~$\liminf_n f(y_n)\geq f(x)$, thus there exists~$\lim_n f(y_n)=f(x)$.
		Noting also that
		\begin{equation}\label{eq:ChainRule:1}
			\begin{aligned}
				\frac{\quadr*{(g\circ f)(y_n)-(g \circ f)(x)}_- }{\mssd(y_n,x)} & = \frac{\tquadre{(g\circ f)(y_n)-(g \circ f)(x)}_-}{\tquadre{f(y_n)-f(x)}_-} \cdot \frac{\tquadre{f(y_n)-f(x)}_-}{\mssd(y_n, x)} 
				\\
				& = 
				\frac{g(f(x)) - g(f(y_n))}{f(x) - f(y_n)} \cdot \frac{\quadr*{f(y_n)-f(x)}_-}{\mssd(y_n, x)} 
			\end{aligned}
		\end{equation}
		we conclude that
		\begin{align*}
			\limsup_{n\to \infty} 	\frac{\tquadre{(g\circ f)(y_n)-(g \circ f)(x)}_- }{\mssd(y_n,x)} \leq  \partial_- g \tond*{f(x)} \cdot \slo{f}(x)
		\end{align*}
		as desired.
		
		\medskip
		
		We now prove the converse inequality:
		\begin{equation}\label{eq:ChainRule:-}
			\slo{(g\circ f)}{(x)} \geq \partial_- g \tond*{f(x)} \cdot \slo{f}(x) \fstop
		\end{equation}
		If $\slo{f}(x) = 0$ the claim is trivial, so we assume now that $\slo{f}(x)>0$. By \eqref{e: sequential characterization of limsup} there exists $\seq{y_n}_n \in S$ such that
		\[
		\limsup_{n\to \infty} \frac{\tquadre{f(y_n)-f(x)}_-}{\mssd(y_n, x)} = \slo{f}(x)>0 \fstop
		\]
		By restricting to a subsequence we can assume that $f(y_n)<f(x)$.
		Arguing as before we have the existence of $\lim_n f(y_n)= f(x)$ and that~\eqref{eq:ChainRule:1} holds.
		Taking the limit superior as $n\to \infty$ in~\eqref{eq:ChainRule:1} gives
		\[
		\slo{(g\circ f)}(x) \geq	\limsup_{n\to \infty} 	\frac{\quadr*{(g\circ f)(y_n)-(g \circ f)(x)}_- }{\mssd(y_n,x)} \geq  \partial_- g \tond*{f(x)} \cdot \slo{f}(x)
		\]
		as desired.
		Combing~\eqref{eq:ChainRule:-} with the opposite inequality~\eqref{eq:ChainRule:+} yields the assertion.
	\end{proof}

	\section{Estimating recursive inequalities}
	
	The following lemma contains some estimates that are used  in the proof of Corollary \ref{cor:conv-rate-power-dis}.

	\begin{lemma}\label{l:rec-ineq-app}
		Let $\seq{f_k}_{k\in \N}$ be a sequence of 
		non-negative real numbers with $f_0 > 0$
		and suppose that for some $\alpha, \delta > 0$ 
		the recursive relation 
		\begin{align} \label{eq:rec-ineq-app}
			f_{k-1} - f_k \geq  \alpha  f_k^\delta 
		\end{align}
		holds for all $k \geq 1$.
		Then, for all $k \geq 0$:
				\begin{subnumcases}{f_k \leq}
					\label{eq:f3}
					\big(f_0^{-(\delta-1)}+Ck \big)
				^{-1/(\delta-1)} 
				& if
					$\delta > 1 \comma$ 
					\\
					\label{eq:f2}
					(1+\alpha)^{-k} f_0 & if
					$\delta = 1 \comma$ 
					\\
					\label{eq:f1}
					\tond*{1+\tilde \alpha}^{-k} f_0 
						& 
					if
					$\delta < 1 \comma$
				  \end{subnumcases}
		where 
		$\tilde \alpha = {\alpha}/ {f_0^{1-\delta}}$
		and
		\[C \coloneqq \sup_{R>1} \min \graf*{\frac{\alpha(\delta-1)}{R}, \tond*{R^{(\delta-1)/\delta}-1} f_0^{1-\delta}} \fstop\]
		Furthermore, if $\delta < 1$, we also have
		\begin{align}
			\label{eq:f4}
			f_k \leq \alpha^{{1}/{(1-\delta)}} 
				\cdot {2^{-\delta^{-(k-k_0)}}}
				\quad
				\text{for  \ } 
				 k \geq k_0
				 	\coloneqq 
				\log_{1+\tilde\alpha}
				\tond*{ 2\tilde\alpha^{-{1}/{(1-\delta)}} }
				\fstop
		\end{align}

	\end{lemma}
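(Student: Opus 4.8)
The plan is to reduce all three regimes to monotonicity of the sequence together with a single substitution. First I would observe that \eqref{eq:rec-ineq-app} forces $f_k \le f_{k-1}$, so $(f_k)_k$ is non-increasing with $f_k \le f_0$; moreover, if $f_k = 0$ for some $k$ then $f_j = 0$ for all $j \ge k$ and every asserted bound holds trivially, so I may assume $f_k > 0$ throughout. The cases $\delta \le 1$ are then immediate by \emph{linearising} the right-hand side: since $t \mapsto t^{\delta - 1}$ is non-increasing for $\delta \le 1$ and $f_k \le f_0$, one has $f_k^{\delta} = f_k \cdot f_k^{\delta - 1} \ge f_k\, f_0^{\delta-1}$, whence $f_{k-1} - f_k \ge \tilde\alpha f_k$ with $\tilde\alpha = \alpha f_0^{\delta-1}$ (and $\tilde\alpha = \alpha$ when $\delta = 1$). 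Thus $f_k \le (1+\tilde\alpha)^{-1} f_{k-1}$, and iterating gives \eqref{eq:f2} and \eqref{eq:f1}.

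For the polynomial bound \eqref{eq:f3} (case $\delta > 1$) I would pass to the reciprocal powers $u_k := f_k^{-(\delta-1)}$ and prove the clean per-step increment $u_k - u_{k-1} \ge C$, after which summation yields $u_k \ge u_0 + Ck$, that is, \eqref{eq:f3}. To produce the exact constant $C = \sup_{R>1}\min\{\,\cdot\,\}$ I would fix $R>1$ and split on the one-step ratio $\rho := f_{k-1}/f_k \ge 1$. If $\rho \le R^{1/\delta}$ (a small step), convexity of $t \mapsto t^{-(\delta-1)}$ (the tangent inequality at $f_{k-1}$) together with \eqref{eq:rec-ineq-app} gives $u_k - u_{k-1} \ge (\delta-1)\,\alpha\,\rho^{-\delta} \ge \alpha(\delta-1)/R$, the first term in the minimum. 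If instead $\rho > R^{1/\delta}$ (a large step), I would argue directly: from $f_k^{-(\delta-1)} > R^{(\delta-1)/\delta} f_{k-1}^{-(\delta-1)}$ and $f_{k-1} \le f_0$ one gets $u_k - u_{k-1} > (R^{(\delta-1)/\delta}-1)\, f_0^{1-\delta}$, the second term. Hence $u_k - u_{k-1} \ge \min\{\,\cdot\,\}$ for every $R>1$, and taking the supremum over $R$ yields $u_k - u_{k-1} \ge C$.

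For the super-exponential bound \eqref{eq:f4} (case $\delta<1$) the idea is to discard the linear term: $f_{k-1} \ge \alpha f_k^\delta$ gives $f_k \le \alpha^{-1/\delta} f_{k-1}^{1/\delta}$. Writing $M := \alpha^{1/(1-\delta)}$ and $g_k := f_k/M$, a short computation (using $M^{(1-\delta)/\delta}=\alpha^{1/\delta}$) collapses this to the parameter-free recursion $g_k \le g_{k-1}^{1/\delta}$ with exponent $1/\delta > 1$. Once $g_{k_0} \le \tfrac12$ for some index $k_0$, induction gives $g_{k_0 + j} \le g_{k_0}^{\delta^{-j}} \le 2^{-\delta^{-j}}$, which is exactly \eqref{eq:f4} after multiplying by $M$. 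It then remains to locate such a $k_0$, and here I would invoke the already-proven exponential decay \eqref{eq:f1}: the inequality $(1+\tilde\alpha)^{-k} f_0 \le M/2$ rearranges, using $\alpha^{1/(1-\delta)} = \tilde\alpha^{1/(1-\delta)} f_0$, to $k \ge \log_{1+\tilde\alpha}\!\big(2\tilde\alpha^{-1/(1-\delta)}\big)$, which is precisely the stated threshold $k_0$.

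I expect the only genuine obstacle to be the bookkeeping in the case $\delta>1$: the constant $C$ is not obtained from a single estimate but from balancing two regimes, and one must check that the exponents ($\rho^{-\delta}$ in the small-step bound against $R^{(\delta-1)/\delta}$ in the large-step bound) line up so that the common threshold $\rho = R^{1/\delta}$ produces exactly the two terms of the minimum. All remaining steps reduce to elementary manipulations of the recursion and the already-established monotonicity and exponential bounds.
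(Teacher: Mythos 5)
Your proposal is correct and follows essentially the same route as the paper's own proof: the same linearisation $f_k^\delta \geq f_0^{\delta-1} f_k$ for $\delta \leq 1$, the same passage to $u_k = f_k^{-(\delta-1)}$ with the two-regime split at $\rho^\delta = R$ (your tangent-line inequality for $t\mapsto t^{1-\delta}$ is exactly the paper's concavity estimate for $H(s)=\tfrac{1}{1-\delta}s^{1-\delta}$ written in a different order), followed by taking $\sup_R \min\{C_1(R),C_2(R)\}$ and telescoping. The treatment of \eqref{eq:f4} is also identical: normalise by $\alpha^{1/(1-\delta)}$ to get the parameter-free recursion, and use \eqref{eq:f1} with the stated $k_0$ to reach the level $\tfrac12$ before iterating.
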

	
	\begin{proof}
		Note first that in all cases the sequence $\seq{f_k}_k$ is non-increasing.
		
		\smallskip
		\noindent
		\eqref{eq:f3}: \
		We follow the arguments of \cite{AttBol2009}. 
		Fix $R\in(1,\infty)$ and consider the concave function $H (s)   = \frac{1}{1-\delta} s^{1-\delta}$ and its derivative $h(s) = H'(s)= s^{-\delta}$.
		Observe that \eqref{eq:rec-ineq-app} can be equivalently written as 
		\[
		\alpha \leq  \tond*{f_{k-1} - f_{k}}h(f_k) \fstop
		\]				
		
		Suppose first that $h(f_k)\leq R h(f_{k-1})$.
		Using the concavity of $H$ we obtain			
		\begin{align*}
			\alpha
			&\leq 
				\tond*{f_{k-1} - f_{k}}h(f_k)
			 \leq  
			 	R
			 	\tond*{f_{k-1} - f_{k}}h(f_{k-1})
			 \leq 
			 	R
			 	\tond*{H(f_{k-1})-H(f_k)}
			 = 
			 	\frac{R}{\delta-1} 
				\tond*{f_k^{1-\delta} - f_{k-1}^{1-\delta}}.
		\end{align*}
		Writing 
			$C_1(R) 
			\coloneqq 
			{\alpha(\delta-1)}/{R} 
			>0
			$,
		this shows that 
			\begin{align*}
				f_{k}^{1-\delta}-f_{k-1}^{1-\delta} \geq C_1(R) \fstop
			\end{align*}
		
			Suppose next that instead $h(f_k)>R h(f_{k-1})$. 
		Raising this inequality to the power $\frac{\delta-1}{\delta}$ we obtain
		\[
		f_k^{1-\delta} > R^\frac{\delta-1}{\delta}
				f_{k-1}^{1-\delta}
		\]
		and hence, since $(f_k)_k$ is non-increasing,
		\begin{align*}
			f_{k}^{1-\delta} - f_{k-1}^{1-\delta} &\geq \tond*{R^\frac{\delta-1}{\delta}-1}f_{k-1}^{1-\delta}
			 \geq \tond*{R^\frac{\delta-1}{\delta}-1} f_0^{1-\delta}
			 \eqqcolon C_2 (R) >0. 
		\end{align*}
		
		Finally, defining $C\coloneqq \sup_{R\in (1,\infty)} \min\{ C_1(R), C_2(R)\} > 0$, 
		the above  inequalities combined yield
		\begin{align*}
			f_{k}^{1-\delta}-f_{k-1}^{1-\delta} \geq C.
		\end{align*}
		Evaluating a telescopic sum, we obtain
		$
			f_k^{1-\delta} - f_0^{1-\delta} \geq Ck
		$.
		Rearranging terms we obtain the desired inequality
		\eqref{eq:f3}.

		\smallskip
		\noindent
		\eqref{eq:f2}: \
		  This is straightforward.

		\smallskip
		\noindent		
		\eqref{eq:f1}: \ 
		Suppose without loss of generality that $f_k > 0$.
		From \eqref{eq:rec-ineq-app} it follows that
		\begin{align*}
			\frac{f_{k-1}}{f_k} 
				\geq 1 + \frac{\alpha}{f_k^{1-\delta}} 
				\geq 1 + \frac{\alpha}{f_0^{1-\delta}}
				=	 1 + \tilde \alpha,
		\end{align*}
		from which we deduce \eqref{eq:f1}.

		\smallskip
		\noindent
		\eqref{eq:f4}: \
		Writing
			$\tilde{f}_k \coloneqq 
			{\alpha^{-1/(1-\delta)}} f_k$
		we note that \eqref{eq:rec-ineq-app} implies
		$
			\tilde{f}_k^\delta \leq \tilde{f}_{k-1}
		$
		and therefore
		\begin{align*}
			\tilde{f_k} 
				\leq 
			\big(\tilde{f}_{k_0}\big)^
				{{\delta^{-(k - k_0)}}} \fstop
		\end{align*}
		Moreover, \eqref{eq:f1} and the definition of $k_0$ yield
			$
			\tilde f_{k_0} 
				\leq 
			(1 + \tilde \alpha)^{-k_0} \tilde f_0 
				=
			\frac12
			$.
		Combining these estimates, we obtain the desired estimate
		$\tilde{f_k} \leq 2^
		{-{\delta^{-(k - k_0)}}}$.
		\end{proof}

	{\small
	\bibliographystyle{abbrv}

	}

\end{document}